\newtheorem{Theorem}{Theorem}[section]
\newtheorem{Proposition}{Proposition}[section]
\newtheorem{Lemma}{Lemma}[section]
\newtheorem{Corollary}{Corollary}[section]
\newtheorem{Definition}{Definition}[section]
\newtheorem{Remark}{Remark}[section]
\newcommand{\bTheorem}[1]{
\begin{Theorem} \label{T#1} }
\newcommand{\eT}{\end{Theorem}}
\newcommand{\bProposition}[1]{
\begin{Proposition} \label{P#1}}
\newcommand{\eP}{\end{Proposition}}
\newcommand{\bLemma}[1]{
\begin{Lemma} \label{L#1} }
\newcommand{\eL}{\end{Lemma}}
\newcommand{\bCorollary}[1]{
\begin{Corollary} \label{C#1} }
\newcommand{\eC}{\end{Corollary}}
\newcommand{\bFormula}[1]{
\begin{equation} \label{#1}}
\newcommand{\eF}{\end{equation}}
\newcommand{\bDefinition}[1]{
 \begin{Definition} \label{D#1}}
\newcommand{\eD}{\end{Definition}}
\newcommand{\bProof}{{\bf Proof: }}
\def\cA{{\mathcal A}}
\def\cC{{\mathcal C}}
\def\cF{{\mathcal F}}
\def\cH{{\mathcal H}}
\def\cI{{\mathcal I}}
\def\cJ{{\mathcal J}}
\def\cL{{\mathcal L}}
\def\cM{{\mathcal M}}
\def\cO{{\mathcal O}}
\def\cP{{\mathcal P}}
\def\cQ{{\mathcal Q}}
\def\cS{{\mathcal S}}
\def\cU{{\mathcal U}}
\def\cX{{\mathcal X}}
\def\cY{{\mathcal Y}}
\def\eps{\varepsilon}
\let\wt=\widetilde
\let\wh=\widehat
\def\fb{\mathfrak{b}}
\def\fd{\mathfrak{d}}
\def\fj{\mathfrak{j}}
\newcommand{\Int}{\displaystyle \int}
\newcommand{\Frac}{\displaystyle \frac}
\renewcommand{\Re}{{\rm Re}\,}
\def\d{\partial}
\def\ddj{\dot \Delta_j}
\def\ddk{\dot \Delta_k}
\newcommand{\du}{\delta\!\vu} 
\newcommand{\db}{\delta\! b}
\renewcommand{\dj}{\delta\! j}
\newcommand{\vr}{\varrho}
\newcommand{\vu}{\vc{u}}
\newcommand{\vc}[1]{{\vec #1}}
\newcommand{\Div}{{\rm div}_x}
\renewcommand{\div}{{\rm div}\,}
\newcommand{\Grad}{\nabla_x}
\newcommand{\vv}{\vc{v}}
\newcommand{\ep}{\varepsilon}
\newcommand{\R}{\mathbb R}
\newcommand{\Z}{\mathbb Z}
\newcommand{\N}{\mathbb N}
\newcommand{\T}{\mathbb T}
\date\today
\begin{document}



\title{Diffusive limits  for a barotropic model of radiative flow}
\author{Rapha\"el Danchin$^*$, Bernard Ducomet$^{**}$}
\maketitle
\centerline{$^*$Universit\'{e} Paris-Est,  LAMA (UMR 8050), UPEMLV, UPEC, CNRS, Institut Universitaire de France,}
\centerline{61 avenue du G\'en\'eral de Gaulle, 94010 Cr\'eteil Cedex 10}
\medskip
\centerline{$^{**}$CEA, DAM, DIF, D\'epartement de Physique Th\'eorique et Appliqu\'ee}
\centerline{F-91297 Arpajon, France}

\begin{abstract}
 Here we aim at justifying rigorously  different types of physically relevant  diffusive limits  for radiative flows.
For simplicity, we consider the barotropic situation,   and adopt  the so-called $P1$-approximation of the radiative
transfer equation. In the critical functional framework, we establish the existence of 
global-in-time strong solutions corresponding to small enough data, and exhibit  uniform estimates 
with respect to the coefficients of the system. Combining with standard compactness arguments, this enables us to
 justify rigorously the convergence of the solutions to the expected  limit systems. 

Our results hold true 
in the whole space $\R^n$ as well as in a periodic box $\T^n$ with $n\geq2.$ 
\end{abstract}

{\bf Keywords:} Radiation hydrodynamics, Navier-Stokes system,  diffusive limit, critical regularity, $P1$-approximation.

\section{Introduction}
\label{i}

We consider the barotropic version of a model of radiation hydrodynamics. 
Our main goal is to provide the rigorous justification of asymptotics 
that have been investigated formally and numerically by  Lowrie, Morel and Hittinger \cite{LMH}, 
and mathematically by the second author and \v S. Ne\v casov\'a in \cite{DN3,DN3b,DN4} in the finite energy weak solutions framework.

The fluid is described by standard classical fluid mechanics for the mass density $\vr$ and the velocity
field $\vu$ as functions of the time $t\in \R_+$ and of the (Eulerian) spatial
coordinate $x\in\Omega$   where $\Omega$  is either the whole space  $\R^n$ 
or  some periodic box $\T^n$  with $n\geq2.$  

 Radiation acts through  some  \emph{radiative momentum source} $\vc S_F$ which is given by 
$$\vec S_F= \frac{1}{c}\ \int_0^{\infty}\int_{{\cS}^{n-1}}\vc \omega S\ d\vc \omega\, d\nu,$$
where $c$ is the light speed. 
\medbreak
The radiative source $S=S(t,x,\vc\omega,\nu)$ depends on the direction vector $\vc\omega\in
\cS^{n-1}$ (where $\cS^{n-1}$ denotes the unit sphere of $\R^n$), and on the frequency 
$\nu\geq0$ of the photons, and   is given by 
$$
S=\sigma_a\bigl(B(\nu,\vr)-{\mathcal I}\bigr)+\sigma_s\bigl(\tilde {\mathcal I}-{\mathcal I}\bigr)\quad\hbox{where}\quad
\tilde {\mathcal I}:=\frac{1}{|{\cS}^{n-1}|}\int_{{\cS}^{n-1}}{\mathcal I}\ d\vc \omega.$$
The radiative intensity $\cI$ obeys the transfer equation
\bFormula{i4}
\frac{1}{c}\ \partial_t {\mathcal I} + \vc \omega \cdot \Grad {\mathcal I} = S\ \ \ \ \ \ \mbox{in} \ (0,T) \times \Omega\times {\cS}^{n-1} \times (0, \infty).\eF
In the present paper, as in \cite{DD,DD3}, we make the following simplifying assumptions
\begin{enumerate}
\item Isotropy~: the transport coefficients  $\sigma_a$ and $\sigma_s$ 
are independent of  $\vc \omega$;
 \item `Gray' hypothesis~: $\sigma_a$ and  $\sigma_s$ are independent of $\nu$;
 \item  `$P1$ hypothesis'~:  the averaged radiative intensity   $I:=\int_0^{\infty}{\mathcal I}\ d\nu$
 is given by the ansatz
 \bFormula{Idec}
I=I_0+\vc \omega\cdot \vec I_1,
\eF
where  $I_0$ and $\vec I_1$  are independent of  $\vec \omega$ and $\nu.$ 
\end{enumerate}
\smallbreak
Plugging   \eqref{Idec} in   \eqref{i4}, and computing the $0$th and $1$st order momentum 
with respect to    $\vec \omega,$ we find out the following evolution equations for
$I_0$ and $I_1$ (keeping the same notation $B$ for the distribution function averaged
in $\nu$)
\bFormula{J4bis}
\frac{1}{c}\ \partial_t I_0 + \frac{1}{n}\ \Div \vec I_1 = \sigma_a(\vr)(B(\vr)-I_0),
\eF
\bFormula{J4ter}
\frac{1}{c}\ \partial_t \vec I_1 +  \Grad I_0 = (\sigma_a(\vr)+\sigma_s(\vr))\vc I_1.
\eF
Besides, the radiative force is now given by
\begin{equation}\label{eq:SF}
\vc S_F=\biggl(\frac{\sigma_a(\vr)+\sigma_s(\vr)}n\biggr)\vc I_1.
\end{equation}

In order to identify the most relevant asymptotic regimes, we rewrite
the equations in dimensionless form. 
To this end, introduce some 
reference hydrodynamical quantities (length, time, velocity, density, pressure): 
$\bar L, \ \bar T, \ \bar U, \bar\vr, \ \bar p,$
  and  reference    radiative quantities (radiative intensity,  absorption and scattering coefficients and equilibrium function): $\bar I, \ \bar\sigma_{a}, \ \bar\sigma_{s}$ and $\bar B.$
 \medbreak 
Let $Sr:={\bar L}/{\bar T\bar U},$ $Ma:={\bar U}/{\sqrt{\bar\vr\bar p}}$ and $Re:={\bar U\bar\vr \bar L}/{\bar\mu}$
 be the Strouhal, Mach and  Reynolds numbers corresponding to hydrodynamics. Let also define 
 ${\mathcal C}:={c}/{\bar U},
\ {\mathcal L}:=\bar L\bar\sigma_{a},
\ {\mathcal L}_s:={\bar\sigma_{s}}/{\bar\sigma_{a}},$
 various dimensionless numbers corresponding to radiation.
 In all that follows, we assume  our flow to be  strongly under-relativistic so that \emph{$\cC$ is large.}

Choosing $\bar B=\bar I,$  we discover  that the evolution of the dimensionless
unknowns (still denoted in the same way) is  governed by the following 
system of equations
$$\left\{
\begin{array}{l}
\!\! Sr\, \partial_t \vr + \div (\vr \vu) = 0,\\[1.5ex]
 \!\! Sr\,\partial_t (\vr \vu) + \div (\vr \vu \otimes \vu) 
+ \frac{1}{Ma^2} \nabla p-  \frac{1}{Re}\!\left(\div(\mu\nabla\vu\!+\!{}^t\nabla\vu)\!+\!\nabla(\lambda\div \vu)\right)
=\cL\bigl(\frac{\sigma_a\!+\!{\mathcal L}_s\sigma_s}{n}\bigr)\vc I_1,\\[1.5ex]
\!\!\frac{Sr}{{\cC}}\, \partial_t I_0 + \frac{1}{n}\ \mbox{div} \vec I_1
={\mathcal L} \sigma_a\left(B- I_0\right),\\[1.5ex]
\!\!\frac{Sr}{{\cC}}\, \partial_t \vec I_1 +  \nabla I_0
=-{\mathcal L}\left( \sigma_a+{\mathcal L}_s \sigma_s\right)\vec I_1,
\end{array}\right.
$$
where $\vr=\vr(t,x)\in\R_+$ and $\vu=\vu(t,x)$ stand for the density and pressure, respectively, 
$p=P(\rho)$ is the pressure, $\lambda=\lambda(\rho)$ and $\mu=\mu(\rho)$ are 
the viscosity coefficients. The given functions $P,$ $\lambda$ and $\mu$ are supposed sufficiently
smooth, and we make the following strict ellipticity assumption
$$
\nu:=\lambda+2\mu>0\quad\hbox{and}\quad \mu>0.
$$
In our recent work \cite{DD3}, we gave a  mathematical justification 
of the low Mach number asymptotics. In the present paper, we investigate another type of physically relevant asymptotic
regimes, which are of \emph{diffusive} type. They  correspond to the case where $\cC$ is large
 and  all the other dimensionless numbers, but $\cL$ and $\cL_s,$ are of order $1.$
To make it more concrete, take
$$Ma= Sr=Re=1,\quad {\mathcal C}=\varepsilon^{-1},\quad
\bar\vr=P'(\bar\vr)=B'(\bar\vr)=\sigma_a(\bar\vr)=\sigma_s(\bar\vr)=1,
$$
 where $\varepsilon$ is a small positive number, bound to tend to $0.$ 
 \medbreak
 Because we shall  focus on small perturbations of
 the reference density $\bar\vr=1,$ 
 it is convenient to introduce the new unknown  $b:=B(\vr)-B(1).$
 In this context,  all the functions of $\vr$ may be written in terms of $b.$
 Setting $j_0:=I_0-B(1)$ and $\vc j_1:=\vc I_1,$ and using exponents to emphasize the dependency with respect to $\ep,$ we eventually 
 get  the following  system
\begin{equation}\label{eq:NSrad}\left\{
\begin{array}{l}
\partial_t b^\ep + \vu^\ep \cdot\nabla b^\ep +(1+k_1(b^\ep))\div \vu^\ep=0,\\[1.5ex]
 \partial_t  \vu^\ep + \vu^\ep\cdot\nabla\vu^\ep- (1+k_2(b^\ep)){\mathcal A}\vu^\ep
 +(1+k_3(b^\ep))\nabla b^\ep = \frac{{\mathcal L}(1+\cL_s)}n
 (1+k_4(b^\ep))\vc j_1^\ep,\\[1.5ex]
 \ep\partial_t j_0^\ep + \frac{1}{n}\: \div\vc j_1^\ep = {\mathcal L}(b^\ep-j_0^\ep),\\[1.5ex]
\ep\partial_t \vc j_1^\ep+  \nabla j_0^\ep = -{\mathcal L}(1+\cL_s)\vc j_1^\ep,\\[1.5ex]
 \end{array}\right.
\end{equation}  with  ${\mathcal A}:= \mu\Delta+(\lambda+\mu)\nabla\div$
  and where  $k_1,$ $k_2,$ $k_3,$ $k_4$  are smooth functions vanishing at~$0.$

 
 \section{Formal asymptotics}\label{s:formal}

Let us first  present  some formal computations so as to  exhibit
 the limit equations we can  get from \eqref{eq:NSrad}
in different types of diffusive asymptotic regimes.  We restrict to the case where 
   the following necessary and sufficient linear stability condition (derived in \cite{DD}) is fulfilled
\begin{equation}\label{eq:stabcond}
n\nu\cL>\eps\biggl(\frac{2+\cL_s}{1+\cL_s}\biggr)\cdotp
\end{equation}
Note that \eqref{eq:stabcond}  implies that $\liminf \cL\eps^{-1}>0$ for $\eps$ going to $0.$ 
\medbreak
In all that follows,  it is assumed that 
$(b^\eps,\vu^\eps,j_0^\eps,\vc j_1^\eps)$  converges to $(b,\vu,j_0,\vc j_1)$ in some suitable space with enough regularity to pass to the limit in the nonlinear terms.

\subsection{Case $\cL\approx\ep$ and $\cL_s\to+\infty$}
Denoting by $\cP$ the $L^2$ orthogonal projector on divergence free vector fields, we get 
\begin{equation}\label{eq:Pj1}
\cP\vc j_1^\eps(t)=e^{-\frac\cL\ep(1+\cL_s)t}\,\cP\vc j_1^\eps(0).
\end{equation}
Hence $\cP\vc j_1^\eps$ tends to $\vc 0$ for $\ep\to0.$

\subsubsection{Subcase $\cL^2\cL_s\to0$}

Setting $\cQ:={\rm Id}-\cP,$ we see that the equation for $j_0^\eps$ entails that $\cQ\vc j_1^\eps=\cO(\eps).$
Next, the equation for $\cQ\vc j_1^\eps$ implies that $\nabla j_0^\eps$ goes to 
$\vc 0,$ too, because $\eps^2\cL_s\to0$.      Assuming that $j_0$ decays to $0$ at infinity, this yields  $j_0=0.$
\smallbreak
\medbreak
{}From the equation for $\vc j_1^\eps,$ we also get
\begin{equation}\label{eq:rad}
-\cL(1+\cL_s)\vc j_1^\eps=\nabla j_0^\eps +\cO(\eps).
\end{equation}
Hence $\ep(1+\cL_s)\vc j_1^\ep$ goes to $\vc 0$ and  $(b,\vc u)$ thus satisfies the barotropic Navier-Stokes equations.
In other words, the radiative effect becomes negligible
in the asymptotic $\cL\approx\eps$ and $\eps^2\cL_s\to0$ with $\cL_s\to+\infty$

\subsubsection{Subcase $\lim_{\ep\to0}\cL^2\cL_s\in(0,+\infty)$}
This is the so-called \emph{nonequilibrium diffusion regime}. 
The analysis of the previous paragraph shows that $\vc j_1^\eps=\cO(\eps)$ (hence  $\vc j_1=\vec 0$)
and that \eqref{eq:rad} holds true. 
The new fact is that the equation for $j_0^\eps$ combined with \eqref{eq:rad}  implies that 
\begin{equation}\label{eq:radb}
\d_t j_0^\eps+\frac\cL\ep\bigl(j_0^\eps-b^\eps\bigr)
-\frac1n\frac\cL\ep\frac1{\cL^2\cL_s}\Delta j_0^\eps=\cO(\eps).
\end{equation}
 Now, if we assume  that
$$
\frac\cL\ep\to\frac\kappa{n\nu}\quad\hbox{and}\quad
\cL^2\cL_s\to\frac m{\nu^2},
$$
for some $m\in(0,+\infty)$ and $\kappa>1$ (see \eqref{eq:stabcond}),  then
  $(b,\vc u)$  satisfies the following compressible Navier-Stokes equations coupled with a parabolic equation
\begin{equation}\label{eq:noneq1}\left\{
\begin{array}{l}
 \partial_t b+ \vu\cdot\nabla b+ (1+k_1(b))\div \vu=0,\\[1ex]
\partial_t \vu + \vu\cdot\nabla\vu -(1+k_2(b))\cA\vu + (1+k_3(b))\nabla b
    +\frac1n(1+k_4(b))\nabla j_0=\vc 0,\\[1ex]
\partial_t j_0+ \frac\kappa{n\nu}\bigl(j_0-b-\frac{\nu^2}{nm}\Delta j_0\bigr)=0.
\end{array}\right.
\end{equation}

\subsubsection{Subcase $\cL^2\cL_s\to+\infty$}

We still have $\vc j_1^\eps=\cO(\eps)$, \eqref{eq:rad} and thus \eqref{eq:radb} holds true.
Now, as $\cL^2\cL_s\to+\infty$ and $\cL\approx\ep,$ the r.h.s. of \eqref{eq:radb} tends to $0.$
Therefore, if we assume as before that   $\cL/\ep\to\kappa/(n\nu)$ then 
we find out that $(b,\vu, j_0)$ satisfies the following \emph{degenerate nonequilibrium diffusion system}
\begin{equation}\label{eq:noneq2}\left\{
\begin{array}{l}
 \partial_t b+ \vu\cdot\nabla b+ (1+k_1(b))\div \vu=0,\\[1ex]
\partial_t \vu + \vu\cdot\nabla\vu -(1+k_2(b))\cA\vu + (1+k_3(b))\nabla b
    +\frac1n(1+k_4(b))\nabla j_0=\vc 0,\\[1ex]
\partial_t j_0+\frac\kappa{n\nu}( j_0-b)=0.
\end{array}\right.
\end{equation}

 
 \subsection{Case $\ep\ll\cL\ll1$}

Recall that we have  \eqref{eq:rad} while the equation for $j_0^\ep$  implies that
\begin{equation}\label{eq:radter}
\div\vc j_1^\ep=n\cL(b^\ep-j_0^\ep)+\cO(\ep).
\end{equation}
Hence $\cQ\vc j_1=0$ (as $\cL\to0$),  and 
\begin{equation}\label{eq:radquater}
\Delta j_0^\ep+n\cL^2(1+\cL_s)(b^\ep-j_0^\ep)=\cO(\ep)+\cO(\ep\cL(1+\cL_s)).
\end{equation}

\subsubsection*{Subcase $\cL^{2}\cL_s\to0$}

Then \eqref{eq:radquater} implies that $\Delta j_0=0$ and thus 
$j_0\equiv0$ (if one assumes that $j_0\to0$ at $\infty$). 
Consequently, \eqref{eq:rad} implies that the radiative force in the velocity 
equation tends to $0$ when $\ep$ goes to $0.$
Therefore $(b,\vc u)$ just satisfies the classical compressible Navier-Stokes equation.

\subsubsection*{Subcase $\nu^2\cL^2\cL_s\to m\in(0,+\infty)$}

We  have $\vc j_1=\vc 0,$ and Relations \eqref{eq:rad}, \eqref{eq:radquater} imply 
that $(b,\vc u,j_0)$ fulfills the following Navier-Stokes-Poisson system
\begin{equation}\label{eq:eq1}\left\{
\begin{array}{l}
 \partial_t b+ \vu\cdot\nabla b+ (1+k_1(b))\div \vu=0,\\[1ex]
\partial_t \vu + \vu\cdot\nabla\vu -(1+k_2(b))\cA\vu + (1+ k_3(b))\nabla b
    +\frac1n(1+k_4(b))\nabla j_0=\vec 0,\\[1ex]
-\nu^2\Delta j_0+mn(j_0-b)=0.
\end{array}\right.
\end{equation}

\subsubsection*{Subcase $\cL^{2}\cL_s\to+\infty$}

Then \eqref{eq:radquater} implies that $j_0=b.$ Combining with \eqref{eq:rad}, we thus find out that  $(b,\vc u)$ fulfills
the following compressible Navier-Stokes equation with \emph{modified pressure law}
\begin{equation}\label{eq:eq2}\left\{
\begin{array}{l}
 \partial_t b+ \vu\cdot\nabla b+ (1+k_1(b))\div \vu=0,\\[1ex]
\partial_t \vu + \vu\cdot\nabla\vu -(1+k_2(b))\cA\vu + \bigl(1+\frac 1n +k_3(b)+\frac1n{k_4(b)}\bigr)\nabla b=\vec 0.\end{array}    \right.
\end{equation}

 
 \subsection{Case $\nu\cL\to \ell\in(0,+\infty)$}

\subsubsection*{Subcase $\nu^2\cL^{2}\cL_s\to m\in[0,+\infty)$}
 
 Passing to the limit in \eqref{eq:radquater} gives
 \begin{equation}\label{eq:eq1bis}
 -\nu^2\Delta j_0+n(\ell^2+m)(j_0-b)=0.
 \end{equation}
 So we get  System \eqref{eq:eq1} for $(b,j_0,\vu)$ with the last equation replaced by 
\eqref{eq:eq1bis}.
 
  \subsubsection*{Subcase $\cL_s\to+\infty$}
 
 Exactly as in the case $\cL\to0,$ we get $j_0=b,$ $\vc j_1=\vc 0,$ and 
 $(b,\vu)$ satisfies \eqref{eq:eq2}.


\subsection{Case $\cL\to+\infty$}

Relation \eqref{eq:rad} implies that $\vc j_1=0,$ and thus, according to 
\eqref{eq:radter}, we have  $j_0=b.$ 
Therefore \eqref{eq:rad} implies that
$$
\cL(1+\cL_s)\vc j_1^\ep\to \nabla b,
$$
and  $(b,\vu)$ thus satisfies \eqref{eq:eq2}. 

 \bigbreak
 
 To make a long story short, the above formal computations pointed out
  five types of  asymptotic regimes. They are governed by
\begin{enumerate}
\item The ordinary compressible Navier-Stokes equations with null radiation
(if $\cL\to0$ and $\cL^2\cL_s\to0$);
\item The compressible Navier-Stokes equation 
with an extra pressure term see \eqref{eq:eq2} (equilibrium diffusion regime 
corresponding to $\ep\ll\cL$ and $\cL^2\cL_s\to+\infty,$ or $\cL\to+\infty$);
\item The Navier-Stokes-Poisson equations  \eqref{eq:eq1} (or \eqref{eq:eq1bis})
(case $\ep\ll\cL\lesssim 1$ and $\nu^2\cL^2\cL_s\to m\in(0,+\infty)$);
\item The compressible Navier-Stokes equations coupled with a parabolic equation \eqref{eq:noneq1}
(nonequilibrium diffusion regime $\cL\approx\ep$ and $\nu^2\cL_s\cL^2\to m\in(0,+\infty)$);
\item  The compressible Navier-Stokes equations coupled with a damped equation \eqref{eq:noneq2}
(degenerate  nonequilibrium diffusion regime $\cL\approx\ep$ and $\cL_s\cL^2\to+\infty$).
\end{enumerate}

 \medbreak
 
 The rest of the paper is devoted to justifying rigorously the last four 
  asymptotics globally in time in the framework of small solutions with critical regularity.
In the next section, we introduce a few notations that will be needed to define our functional framework,
and give an overview of the strategy. 
Section \ref{s:linear} is devoted to a fine analysis of the linearized equations \eqref{eq:NSrad} about $(0,\vc 0,0,\vc 0),$
which turns out to be essentially the key to proving global results and justifying the diffusive asymptotics 
we have in mind. 
The next three sections are devoted to the rigorous justification of 
the nonequilibrium diffusion regime $\cL\approx\ep$ and $\cL_s\cL^2\gtrsim1,$
the equilibrium diffusion regime $\cL\to+\infty$ and of the Poisson type diffusion regime 
 ($\ep\ll\cL\lesssim 1$ and $\nu^2\cL^2\cL_s\to m\in(0,+\infty)$). 
 In all of those sections, we establish a global-in-time existence result for the expected   limit system, and for  \eqref{eq:NSrad} supplemented
 with uniform estimates (for coefficients $\cL$ and $\cL_s$  satisfying the assumptions of the studied regime), 
and eventually show the  convergence  of the solutions of \eqref{eq:NSrad} to those
 of the expected limit system.  Some estimates, of independent interest, for the solutions to a class of linear ODEs corresponding to 
 the linearized equations of \eqref{eq:NSrad} in the Fourier space are postponed in the appendix.


\section{Functional framework and overview of the method}\label{s:results}

The functional framework 
we shall work in is modeled on the linearized equations corresponding to \eqref{eq:NSrad}, 
and is thus the same as in our first paper \cite{DD} devoted to the global well-posedness issue
in critical regularity spaces for small perturbations of a stable constant state. 
The key to proving asymptotic results however, is to prescribe norms  depending on the parameters $\ep,$ $\cL$ 
and $\cL_s,$  so as  to get optimal uniform estimates, enabling our
justifying rigorously the different diffusive  asymptotics exhibited above. 
\smallbreak 
Let us  first very briefly recall the definition of homogeneous Besov spaces $\dot B^s_{2,1}$ (the reader is referred to \cite{BCD}, Chap. 2 for more details). 
For simplicity, we focus on the $\R^n$ case (adapting the
construction to the torus  being quite straightforward).
Fix some smooth radial bump function $\chi:\R^n\to[0,1]$  with $\chi\equiv1$ on $B(0,1/2)$ and $\chi\equiv0$ outside $B(0,1),$
nonincreasing with respect to the radial variable. Let $\varphi(\xi):=\chi(\xi/2)-\chi(\xi).$ 
The elementary spectral cut-off operator entering in the Littlewood-Paley decomposition  is defined by 
$$
\ddj u:=\varphi(2^{-j}D)u=\cF^{-1}(\varphi(2^{-j}D)\cF u),\qquad j\in\Z
$$
where we denote by $\cF$ the standard Fourier transform in $\R^n.$
\medbreak
For any $s\in\R,$ the \emph{homogeneous Besov space} $\dot B^s_{2,1}$ is the set of tempered distributions $u$ so that
$$
\|u\|_{\dot B^s_{2,1}}:=\sum_{j\in\Z} 2^{js}\|\ddj u \|_{L^2}<\infty,
$$
and \begin{equation}\label{eq:LFconv} \lim_{\lambda\to+\infty}\chi(\lambda D) u=0\quad\hbox{in}\quad L^\infty.\end{equation}
\medbreak
As pointed out in \cite{DD}, scaling considerations that neglect  low order terms of  System \eqref{eq:NSrad} suggest that critical regularity 
is $\dot B^{\frac n2-1}_{2,1}$ for $\vu_0,$ $j_{0,0}$ and $\vc j_{1,0},$ and $\dot B^{\frac n2}_{2,1}$ for $b_0.$ 
However,  to handle  lower order terms, one has to make additional assumptions
for the  low frequencies. To this end, 
it is convenient to introduce the following notation (where $\eta$ stands for  a positive  parameter)
$$
\|u\|_{\dot B^s_{2,1}}^{\ell,\eta}:=\sum_{2^k\leq2\eta} 2^{ks}\|\ddk u\|_{L^2}\quad\hbox{and}\quad
\|u\|_{\dot B^s_{2,1}}^{h,\eta}:=\sum_{2^k\geq\eta/2} 2^{ks}\|\ddk u\|_{L^2},
$$
and also
$$
u^{\ell,\eta}:=\sum_{2^k\leq\eta}\ddk u\quad\hbox{and}\quad
u^{h,\eta}:=\sum_{2^k>\eta}\ddk u.
 $$
Note that  $\|u^{\ell,\eta}\|_{\dot B^s_{2,1}}\leq C\|u\|_{\dot B^s_{2,1}}^{\ell,\eta}$
 and  $\|u^{h,\eta}\|_{\dot B^s_{2,1}}\leq C\|u\|_{\dot B^s_{2,1}}^{h,\eta}.$
Because the Littlewood-Paley decomposition is not quite orthogonal, it is important 
 to allow for a small overlap in the above definition of norms.
 
In some places, we will have to specify also the behavior for the middle frequencies, 
 by considering  for given $0<\eta<\eta',$ 
 $$
\|u\|_{\dot B^s_{2,1}}^{m,\eta,\eta'}:=\sum_{\eta\leq 2^k\leq\eta'} 2^{ks}\|\ddk u\|_{L^2}.
$$
Broadly speaking, our strategy to justify the different types of diffusive limits is as  follows:
\begin{itemize}
\item Step 1: We prove `uniform estimates' for the global solutions to \eqref{eq:NSrad}, 
uniform meaning that we want a bound independent of $\ep,$ but the 
norm itself may depend `in a nice way' of the parameters $\ep,$ $\cL$ and $\cL_s.$
\item Step 2:  We show that the limit system is globally well-posed in the small data case. 
\item Step 3: We take advantage of estimates of Step 1 to exhibit  weak compactness properties. 
Combining with the uniqueness result of Step 2, this allows to conclude to  the convergence  of 
the whole family of solutions of \eqref{eq:NSrad} to those of the limit system. 
\end{itemize}
The most technical part   is step 1, as it  requires a fine analysis of the
linearized equations of \eqref{eq:NSrad} about $0$ that keeps track of the coefficients $\cL,$ $\cL_s$ and $\ep.$ 
Schematically, in the Fourier space, one has to resort to different types of estimates for  low, medium and high frequencies. 
The low frequency analysis is carried out by considering  approximate eigenmodes  of the system, that
are constructed by a perturbative method from the (explicit) eigenmodes corresponding to null frequency. 
A part of the difficulty is that the `fluid modes' are of parabolic type, hence the corresponding eigenvalues tend quadratically to $0$
when the frequency size tends to $0$  while the radiative modes are expected to be exponentially damped. 
The high frequency analysis is inspired by the corresponding one for the barotropic Navier-Stokes equations, 
after noticing that coupling between radiative and fluid unknowns occurs only through $0$ order terms, and thus 
tend to be negligible for very high frequencies. 
Last but not least, medium frequency regime has to be looked at with the greatest care, as
the low and high frequency regimes need not overlap.  There is
no general strategy for handling them, apart from guessing  approximate eigenmodes of the system.


\section{Uniform estimates for the linearized equations}\label{s:linear}

In order to reduce  the study to the case where the
total viscosity  $\nu:=\lambda+2\mu$  is $1,$ and  to get
a symmetric first order system for the radiative unknowns, let  us set
\begin{equation}\label{eq:nu}
\bigl(b,\vu,j_0,\vec{ j_1}\bigr)(t,x):=
(b^\ep,\vu^\ep,\sqrt n\,j_0^\ep,\vc j^\ep_1)(\nu t,\nu x).
\end{equation}
Then  $(b^\ep,\vu^\ep,j_0^\ep,\vc j_1^\ep)$ satisfies 
\eqref{eq:NSrad}
if and only if $(b,\vu,j_0,\vc j_1)$ satisfies 
\begin{equation}\label{eq:NSradbis}\left\{
\begin{array}{l}
\partial_t b + \vu \cdot\nabla b +(1+k_1(b))\div \vu=0,\\[1.5ex]
 \partial_t  \vu + \vu\cdot\nabla\vu- (1+k_2(b))\wt{\mathcal A}\vu
 +(1+k_3(b))\nabla b = \frac{\wt\cL\cM}n
 (1+k_4(b))\vc j_1,\\[1.5ex]
 \ep\partial_t j_0 + \frac{1}{\sqrt n}\: \div\vc j_1= \wt{\mathcal L}(b-\sqrt n\, j_0),\\[1.5ex]
\ep\partial_t \vc j_1+ \frac1{\sqrt n}{\nabla j_0} = -\wt\cL\cM\vc j_1,\\[1.5ex]
 \end{array}\right.
\end{equation} 
with
\begin{equation}\label{eq:notcoeff}
{\cM}:=1+\cL_s, \quad\wt\cL:=\nu\cL\ \hbox{ and }\  \wt\cA:=\nu^{-1}\cA.
\end{equation}
 The  corresponding linearized system  reads
\begin{equation}\label{eq:diff}
\left\{\begin{array}{l}
 \partial_t b+ \div \vu=f,\\[1ex]
  \partial_t \vu -\wt\cA\vu +  \nabla b    -\frac{\wt\cL\cM}n\vc j_1=\vec g,\\[1ex]
\ep \partial_t j_0+  \frac1{\sqrt n}\,\div\vec j_1
+\wt\cL (j_0-\sqrt n\,b)=0,\\[1ex]
 \ep\partial_t \vec j_1 +  \frac1{\sqrt n}\,\nabla j_0+\wt\cL\cM\vec j_1=\vec 0.
 \end{array}\right.
\end{equation}
On one hand, the coupling between the incompressible part of $\vu$ and $\vc j_1$
 that is $\cP\vu$ and $\cP\vc j_1$ where $\cP$ stands for the projector
 on divergence-free vector-fields is obvious as
\begin{equation}\label{eq:Pu}
\d_t\cP\vu-\frac\mu\nu\Delta\cP\vu=\frac{\wt\cL\cM}n\cP\vc j_1,
\end{equation} and 
$$
\cP\vc j_1(t)=e^{-\frac{\wt\cL{\cM} t}\ep}\cP\vc j_1(0),
$$
hence in any functional space $X$ we have
\begin{equation}\label{eq:Pj1b}
\wt\cL\cM\|\cP\vc j_1\|_{L^1(X)}\leq \ep\|\cP\vc j_1(0)\|_X.
\end{equation}
On the other hand, the coupling between $b,$  $d:=\Lambda^{-1}\div\vu,$ $j_0$ 
and $j_1:=\Lambda^{-1}\div\vc j_1$  (where $\Lambda^s:=(-\Delta)^{s/2}$) is quite complicated: in Fourier variables, we have
\begin{equation}\label{eq:diff0}
\frac d{dt}\left(\begin{array}{c}\wh b\\\wh d\\\wh j_0\\\wh j_1\end{array}\right)
+\left(\begin{array}{cccc}0&\rho&0&0\\
-\rho&\rho^2&0&-\frac{\wt\cL\cM}n\\
-\frac{\sqrt n\wt\cL}\ep&0&\frac{\wt\cL}\ep&\frac\rho{\ep\sqrt n}\\
0&0&-\frac{\rho}{\ep\sqrt n}&\frac{\wt\cL\cM}\ep\end{array}\right)
\left(\begin{array}{c}\wh b\\\wh d\\\wh j_0\\\wh j_1\end{array}\right)
=\left(\begin{array}{c}0\\0\\0\\0\end{array}\right)\cdotp
\end{equation}
The analysis that has been performed in \cite{DD}   pointed out the following
necessary and sufficient stability condition
\begin{equation}\label{eq:stabdiffu}
\wt\cL>\frac\ep n(1+\cM^{-1}).
\end{equation}
So we shall make this assumption in all that follows. Of course one also has to keep in mind that $\cM>1,$  a consequence of $\cM:=1+\cL_s.$
For notational simplicity, we shall simply denote $\wt\cL$ by $\cL$ in the following computations.

\subsection{Estimates for small $\rho$}

In order to prove estimates in the case $0\leq\rho\leq C_1$ (with $C_1\geq\sqrt{1+n^{-1}}$), we shall use that  \eqref{eq:diff0} enters in the class of ODEs 
 that has been considered in the Appendix. Indeed, it corresponds to \eqref{eq:class}  with 
\begin{equation}\label{eq:coeffdiff}
 \varsigma=\frac{\wt\cL\cM}n,\quad\eta=\frac{\sqrt n\wt\cL}\ep,\quad\beta=\frac{\wt\cL}\ep,\quad
\alpha=\frac1{\ep\sqrt n},\quad\gamma=\frac{\wt\cL\cM}\ep\cdotp
\end{equation}

\subsubsection{The case $\cL\gtrsim1$ and $\cL\ep\cM\gtrsim1$}

We shall follow the first approach proposed in  Appendix \ref{s:A}. It  corresponds  to the following 
matrices $A_0,$ $A_1,$ $A_2$ and $B_1$
$$
A_0=\left(\begin{array}{cccc} 0&0&0&0\\0&0&0&0\\0&0&\frac\cL\ep&0\\0&0&0&\frac{\cL\cM}\ep\end{array}\right),
\qquad
A_1=\left(\begin{array}{cccc} 0&1&0&0\\-1-\frac1n&0&0&0\\0&0&0&\frac{1+\ep^2}{\sqrt n\,\ep}\\0&0&-\frac1{\sqrt n\,\ep}&0\end{array}\right),
$$
$$
B_1=-\left(\begin{array}{cccc} 0&0&0&\frac\ep n\\0&0&\frac1{n^{3/2}}&0\\0&{\sqrt n}&0&0\\\frac1\ep&0&0&0\end{array}\right)
\quad\hbox{and}\quad
A_2=\left(\begin{array}{cccc} 0&0&0&0\\0&1&0&-\frac\ep n\\0&0&0&0\\0&0&0&0\end{array}\right)\cdotp
$$
Therefore  we  set 
\begin{equation}\label{eq:Pdiffu}
P:=\left(\begin{array}{cccc}0&0&0&\frac{\ep^2}{n\cL\cM}\\
0&0&\frac\ep{n^{3/2}\cL}&0\\
0&-\frac{\ep\sqrt n}\cL&0&0\\
-\frac1{\cL\cM}&0&0&0\end{array}\right),
\end{equation}
which corresponds to the change of unknowns
\begin{equation}\label{eq:chgdiffu}
\left(\begin{array}{c}\wh\fb\\ \wh\fd\\ \wh\fj_0\\ \wh\fj_1\end{array}\right)
:=\left(\begin{array}{cccc}1&0&0&\frac{\ep^2}{n\cL\cM}\rho\\
-\frac\ep{n\cL}\rho&1&\frac\ep{n^{3/2}\cL}\rho&\frac\ep n\\
-\sqrt n&-\frac{\sqrt n\,\ep}{\cL}\rho&1&-\frac{\ep^2}{\sqrt n\,\cL}\rho\\
-\frac1{\cL\cM}\rho&0&0&1\end{array}\right)\left(\begin{array}{c}\wh b\\ \wh d\\ \wh j_0\\ \wh j_1\end{array}\right)\cdotp
\end{equation}
According  to \eqref{eq:smallrho1},  working with $(\wh a,\wh d,\wh j_0,\wh j_1)$ or $(\wh\fb,\wh\fd,\wh\fj_0,\wh\fj_1)$ is equivalent whenever
\begin{equation}\label{eq:smallrhodiffu1}
\rho\lesssim\cL\min(\ep^{-1},\cM).
\end{equation}
Let us first compute the matrices  $PB_1,$ $[P,A_1]$ and $A_3$ appearing in \eqref{eq:V}
$$
PB_1=\frac\ep{n\cL}\left(\begin{array}{cccc}-\cM^{-1}&0&0&0\\
0&-1&0&0\\0&0&1&0\\
0&0&0&\cM^{-1}\end{array}\right),
$$
$$[P,A_1]=\frac1\cL\left(\begin{array}{cccc}0&0&-\frac{\ep(1+\cM^{-1})}{n^{3/2}}&0\\
0&0&0&\frac{1+\ep^2(1+\cM^{-1}(n+1))}{n^2}\\
\frac{1+\ep^2(1+\cM(1+n))}{\ep\sqrt n\,\cM}&0&0&0\\
0&-1-\cM^{-1}&0&0\end{array}\right),
$$
$$
A_3=\frac1n\left(\begin{array}{cccc}0&\frac{\ep^2}{\cL^2}&0&\frac{\ep^3}{n\cL^2\cM^2}\\
\frac\ep{\cL\cM}-\frac{(1+n^{-1})\ep^2}{\cL^2\cM^2}&0&\frac\ep{\cL\sqrt n}-\frac{\ep^2}{\cL^2n^{3/2}}&0\\
0&0&0&\frac{\ep(1+\ep^2)}{\sqrt n(\cL\cM)^2}\\
0&0&-\frac\ep{\cL^2\sqrt n}&0\end{array}\right)\cdotp
$$
Because  $\cL\gtrsim1,$ we thus have $|A_3|\lesssim \frac\ep{\cL}\cdotp$
Hence, up to a $\cO(\ep\rho^3/\cL)$ term, the equations for $(\wh\fb,\wh\fd)$ read
$$\displaylines{
\frac d{dt}\left(\begin{array}{c}\wh\fb\\\wh\fd\end{array}\right)
+\rho\left(\begin{array}{cc}0&1\\-1-n^{-1}&0\end{array}\right)\left(\begin{array}{c}\wh\fb\\\wh\fd\end{array}\right)
+\rho^2\left(\begin{array}{cc}-\frac\ep{n\cL\cM}&0\\0&1-\frac\ep{n\cL}\end{array}\right)\left(\begin{array}{c}\wh\fb\\\wh\fd\end{array}\right)
\hfill\cr\hfill=\rho^2\left(\begin{array}{cc}\frac{\ep(1+\cM^{-1})}{n^{3/2}\cL}&0\\0&\frac\ep n-\frac{1+\ep^2(1+\cM^{-1}(n+1))}{n^2\cL}\end{array}\right)
\left(\begin{array}{c}\wh\fj_0\\\wh\fj_1\end{array}\right)\cdotp}
$$
In order to estimate $(\wh\fb,\wh\fd),$ we just follow the method of  Appendix \ref{s:B}, which requires 
Condition \eqref{eq:stabdiffu} and
\begin{equation}\label{eq:rholf}
\rho\leq\frac{\sqrt{1+n^{-1}}}{1-\frac{(1-\cM^{-1})\ep}{n\cL}}\cdotp
\end{equation}
Keeping \eqref{eq:smallrhodiffu1} in mind and noticing that
$$\tilde\nu=1-\frac\ep{n\cL}(1+\cM^{-1}),$$ 
is of order $1$ for small $\ep,$ we thus conclude that if 
\begin{equation}\label{eq:diffuassumption1}
\rho\leq\sqrt{1+n^{-1}}\quad\hbox{and}\quad
\rho\lesssim\cL\min(\ep^{-1},\cM),
\end{equation}
then 
\begin{multline}\label{eq:diffu1}
|(\wh\fb,\wh\fd)(t)|+\rho^2\int_0^t|(\wh\fb,\wh\fd)|\,d\tau
\lesssim|(\wh\fb,\wh\fd)(0)|\\+\rho^2\int_0^t\biggl(\frac\ep\cL|\wh\fj_0|+\Bigl(\ep+\frac1\cL\Bigr)|\wh\fj_1|\biggr)d\tau
+\frac{\ep\rho^3}{\cL}\int_0^t|(\wh\fb,\wh\fd,\wh\fj_0,\wh\fj_1)|\,d\tau.
\end{multline}
Next, we see that the equations for $(\wh\fj_0,\wh\fj_1)$ read (omitting the  $\cO(\ep\rho^3/\cL)$ term)
\begin{multline}\label{eq:j0j1}
\frac d{dt}\left(\begin{array}{c}\wh\fj_0\\\wh\fj_1\end{array}\right)
+\left(\begin{array}{cc}\frac\cL\ep+\frac{\ep\rho^2}{n\cL}&0\\0&\frac{\cL\cM}\ep+\frac{\ep\rho^2}{n\cL\cM}\end{array}\right)\left(\begin{array}{c}\wh\fj_0\\\wh\fj_1\end{array}\right)+\frac\rho{\sqrt n\,\ep}\left(\begin{array}{cc}0&1+\ep^2\\-1&0\end{array}\right)\left(\begin{array}{c}\wh\fj_0\\\wh\fj_1\end{array}\right)
\\=\rho^2\left(\begin{array}{cc}-\frac{1+\ep^2(1+\cM(1+n))}{\ep\sqrt n\,\cL\cM}&0\\0&\frac{1+\cM^{-1}}\cL\end{array}\right)\left(\begin{array}{c}\wh\fb\\\wh\fd\end{array}\right)\cdotp\end{multline}
Therefore, computing
\begin{equation}\label{eq:diffu0}
\frac d{dt}\biggl(|\wh\fj_0|^2+(1+\ep^2)|\wh\fj_1|^2\biggr),
\end{equation}
so as to eliminate the term in $\rho,$ we end up with 
\begin{multline}\label{eq:diffu2}
|(\wh\fj_0,\wh\fj_1)(t)|+\frac\cL\ep\int_0^t|\wh\fj_0,\wh\fj_1|\,d\tau\lesssim|(\wh\fj_0,\wh\fj_1)(0)|
\\+\rho^2\int_0^t\biggl(\Bigl(\frac1{\ep\cL\cM}+\frac\ep\cL\Bigr)|\wh\fb|+\frac1\cL|\wh\fd|\biggr)d\tau
+\frac{\ep\rho^3}{\cL}\int_0^t|(\wh\fb,\wh\fd,\wh\fj_0,\wh\fj_1)|\,d\tau.
\end{multline}
Now, adding up \eqref{eq:diffu1} and \eqref{eq:diffu2}, we easily conclude that
if $\ep$ is small enough
and \begin{equation}\label{eq:diffuassumption2}
\cL\min(1,\ep\cM)\gtrsim 1,
\end{equation}
 then we have
\begin{equation}\label{eq:diffulf1}
|(\wh\fb,\wh\fd,\wh\fj_0,\wh\fj_1)(t)|+\rho^2\int_0^t|(\wh\fb,\wh\fd)|\,d\tau
+\frac\cL\ep\int_0^t|(\wh\fj_0,\wh\fj_1)|\,d\tau\lesssim |(\wh\fb,\wh\fd,\wh\fj_0,\wh\fj_1)(0)|,
\end{equation}
whenever $0\leq\rho\leq\sqrt{1+n^{-1}}.$
\medbreak
Now, resuming to the $\wh\fj_1$ equation in \eqref{eq:j0j1}, and evaluating the first
order term according to \eqref{eq:diffulf1}, we deduce that, in addition 
\begin{equation}\label{eq:diffulf1b}
\frac{\cL\cM}\ep\int_0^t|\wh\fj_1|\,d\tau\lesssim |(\wh\fb,\wh\fd,\wh\fj_0,\wh\fj_1)(0)|.
\end{equation}

\subsubsection{The case $\ep\ll\cL\lesssim1$ with $\ep\cL^2\cL_s^2\gg1$ and $\cL^2\cL_s\gg1$}

If  $\cL\ll1$ then  plugging \eqref{eq:diffu1} in \eqref{eq:diffu2} does not
allow to get \eqref{eq:diffulf1} any longer. 
In order to overcome this, we shall  follow  the \emph{second} approach 
proposed in Appendix  \ref{s:A}  with coefficients defined as in \eqref{eq:coeffdiff}: 
we  set
$$
P=\left(\begin{array}{cccc}0&0&0&\frac{\ep^2}{n\cL\cM}\\
0&0&\frac\ep{n^{3/2}\cL}&0\\
0&-\frac{\ep\sqrt n}{\cL}&0&\frac{1+\ep^2}{\sqrt n(1-\cM)\cL}\\
-\frac1{\cL\cM}&0&\frac1{\sqrt n\,(1-\cM)\cL}&0\end{array}\right),
$$
and we thus have, remembering that $\cM-1=\cL_s$ 
\begin{equation}\label{eq:chgdiffu2}
V=\left(\begin{array}{c}\wh\fb\\ \wh\fd\\ \wh\fj_0\\ \wh\fj_1\end{array}\right)
:=\left(\begin{array}{cccc}1&0&0&\frac{\ep^2}{n\cL\cM}\rho\\
-\frac\ep{n\cL}\rho&1&\frac\ep{n^{3/2}\cL}\rho&\frac\ep n\\
-\sqrt n&-\frac{\sqrt n\,\ep}{\cL}\rho&1&-\frac{1+\ep^2\cM}{\sqrt n\,\cL\cL_s}\rho\\
\frac\rho{\cL\cM\cL_s}&0&-\frac{\rho}{\sqrt n\,\cL_s\cL}&1\end{array}\right)\left(\begin{array}{c}\wh b\\ \wh d\\ \wh j_0\\ \wh j_1\end{array}\right)\cdotp
\end{equation}
The determinant of the above matrix is 
$$
\biggl(1+\frac{\ep^2}{n\cL^2}\rho^2\biggr)\biggl(1+\frac{\ep^2}{n\cL^2\cM^2}\rho^2\biggr)-
\frac{1+\ep^2}{n\cL^2\cL_s^2}\rho^2,
$$
Hence working with $(\wh b,\wh d,\wh j_0,\wh j_1)$ or $(\wh\fb,\wh\fd,\wh\fj_0,\wh\fj_1)$ is equivalent whenever
\begin{equation}\label{eq:smallrhodiffu2}
\rho\lesssim\frac\cL\ep\quad\hbox{and}\quad\rho\leq\sqrt n\,\cL\cL_s.
\end{equation}
Then following the computations of Appendix \ref{s:A}, second approach, and 
setting  $A_3:=(PA_0-A_1)P^2+ A_2 P$  leads to
\begin{multline}\label{eq:Vdiffu}
\frac d{dt}V+\rho\left(\begin{array}{cccc}0&1&0&0\\-1-\frac1n&0&0&0\\0&0&0&0\\0&0&0&0\end{array}\right)\!V
\\+\left(\begin{array}{cccc}-\frac{\ep}{n\cL\cM}\rho^2&0&0&0\\
0&\bigl(1\!-\!\frac{\ep}{n\cL}\bigr)\rho^2&0&0\\
0&0&\frac\cL\ep+\bigl(\ep+\frac{1+\ep^2}{\ep\cL_s}\bigr)\frac{\rho^2}{n\cL}&0\\
0&0&0&\frac{\cL\cM}\ep+\bigl(\frac{\ep}{n\cL\cM}\!-\!\frac{1+\ep^2}{n\ep\cL\cL_s}\bigr)\rho^2\end{array}\right)\!V
\\=\rho^2\left(\begin{array}{cccc}0&0&\frac{(1+\cM^{-1})\ep}{n^{3/2}\cL}&0\\
0&0&0&\frac\ep n-\frac{1+\ep^2}{n^2\cL}-\frac{\ep^2(1+n)}{n^2\cL\cM}\\
-\frac{1+\ep^2}{\ep\sqrt n\,\cL_s\cL}-\frac{(n+1)\ep}{\sqrt n\,\cL}&0&0&0\\
0&-\frac{1}{\cL\cL_s\cM}&0&0
\end{array}\right)V\hfill\cr\hfill
+\rho^3(I+\rho P)A_3(I+\rho P)^{-1}V.\end{multline}
Let us  bound $A_3$ in order to determinate for which values of $\rho$ the last term in \eqref{eq:Vdiffu} is indeed negligible. Just writing that $|A_3|\leq |P|^2(|P|\,|A_0|+|A_1|)+|A_2|\,|P|$ using the explicit values of $A_0,$ $A_1$ and $A_2$ and   
\begin{equation}\label{eq:diffuboundP}
|P|\lesssim\frac1\cL\max\biggl(\ep,\frac1{\cL_s}\biggr),
\end{equation}
does not provide an accurate enough bound  for $A_3.$ Hence one has to go to further computations. Now, we get 
$$
PA_0P^2=\left(\!\!\begin{array}{cccc}
0&\frac{\ep^2}{n\cL^2\cL_s}&0&\frac{\ep}{n^2\cL^2}\bigl(\frac{1+\ep^2}{\cL_s^2}-\frac{\ep^2}{\cM^2}\bigr)\\ 
\frac{1+\ep^2}{n^2\cL^2\cM\cL_s}&0&\frac1{n^{5/2}\cL^2}\bigl(\frac{1+\ep^2}{\cL_s^2}-\ep^2\bigr)&0\\ 
0&-\frac{(1+\ep^2)\cM}{\sqrt n\,\cL^2\cL_s^2}&0&\frac{(1+\ep^2)\cM}{n^{3/2}\ep\cL_s\cL^2}\bigl(\frac{\ep^2}{\cM^2}-\frac{1+\ep^2}{\cL_s^2}\bigr)\\  
-\frac{\ep(1+\ep^2)}{n\cL^2\cM\cL_s^2}&0&\frac{\ep}{n^{3/2}\cL_s\cL^2}\bigl(\ep^2-\frac{1+\ep^2}{\cL_s^2}\bigr)&0 
\end{array}\!\!\right),
$$
$$
A_2P=\left(\!\!\begin{array}{cccc}0&0&0&0\\
\frac\ep{n\cL\cM}&0&\frac\ep{n^{3/2}\cL}&0\\
0&0&0&0\\0&0&0&0\end{array}\!\!\right)
\!\quad\!\hbox{and}\!\!\quad
A_1P^2=\left(\!\!\begin{array}{cccc}0&-\frac{\ep^2}{n\cL^2}&0&-\frac{\ep(1+\ep^2)}{n\cL^2\cL_s}\\
\frac{(1+n^{-1})\ep^2}{n\cL^2\cM^2}&0&\frac{(1+n^{-1})\ep^2}{n^{3/2}\cL^2\cL_s\cM}\\
0&0&0&0\\0&0&0&0\end{array}\!\!\right)\cdotp
$$
Hence, given that $\ep\ll\cL\lesssim1$ and that $\cL_s\approx\cM$ in 
the regime that we are considering, one may conclude that
\begin{equation}\label{eq:diffuA3}
|A_3|\lesssim\max\biggl(\frac\ep\cL,\frac1{\cL^2\cL_s},\frac1{\ep\cL^2\cL_s^2}\biggr)\cdotp
\end{equation}
Note that we still have $\tilde\nu\to1$ for $\ep\to0.$ Hence
 applying the method of the appendix to handle $(\wh\fb,\wh\fd),$ we find out that 
if \eqref{eq:stabdiffu}, \eqref{eq:rholf} and  \eqref{eq:smallrhodiffu2} are fulfilled then
$$\displaylines{
|(\wh\fb,\wh\fd)(t)|+\rho^2\int_0^t|(\wh\fb,\wh\fd)|\,d\tau\lesssim|(\wh\fb,\wh\fd)(0)|
+\rho^2\int_0^t\biggl(\frac{\ep}{\cL}|\wh\fj_0|+\frac1\cL|\wh\fj_1|\biggr)d\tau
\hfill\cr\hfill+\rho^3\max\biggl(\frac\ep\cL,\frac1{\cL^2\cL_s},\frac1{\ep\cL^2\cL_s^2}\biggr)\int_0^t|V|\,d\tau.}
$$
As regards the radiative modes, we have
$$\displaylines{
|\wh\fj_0(t)|+\biggl(\frac\cL\ep+\Bigl(\ep+\frac{1+\ep^2}{\ep\cL_s}\Bigr)\frac{\rho^2}{n\cL}\biggr)\int_0^t|\wh\fj_0|\,d\tau
\leq|\wh\fj_0(0)|+C\rho^2\int_0^t\biggl(\frac1{\ep\cL\cL_s}+\frac\ep\cL\biggr)|\wh\fb|\,d\tau
\hfill\cr\hfill+C\rho^3\max\biggl(\frac\ep\cL,\frac1{\cL^2\cL_s},\frac1{\ep\cL^2\cL_s^2}\biggr)\int_0^t|V|\,d\tau,}$$
$$\displaylines{
|\wh\fj_1(t)|+\biggl(\frac{\cL\cM}\ep+\Bigl(\frac{\ep}{n\cL\cM}-\frac{1+\ep^2}{n\ep\cL\cL_s}\Bigr)\rho^2\biggr)\int_0^t|\wh\fj_1|\,d\tau
\leq|\wh\fj_1(0)|+C\frac{\rho^2}{\cL\cL_s\cM}\int_0^t|\wh\fd|\,d\tau
\hfill\cr\hfill+C\rho^3\max\biggl(\frac\ep\cL,\frac1{\cL^2\cL_s},\frac1{\ep\cL^2\cL_s^2}\biggr)\int_0^t|V|\,d\tau.}
$$
From the above three inequalities, we get for any $A\in(0,1]$
$$\displaylines{
|(\wh\fb,\wh\fd)(t)|+A|\wh\fj_0(t)|+|\wh\fj_1(t)|+\rho^2\int_0^t|(\wh\fb,\wh\fd)|\,d\tau
+A\frac\cL\ep\int_0^t|\wh\fj_0|\,d\tau+\frac{\cL\cL_s}\ep\int_0^t|\wh\fj_1|\,d\tau
\hfill\cr\hfill
\lesssim|(\wh\fb,\wh\fd)(0)|+A|\wh\fj_0(0)|+|\wh\fj_1(0)|
+\rho^2\int_0^t\biggl(\frac{\ep}{\cL}|\wh\fj_0|+\frac1\cL|\wh\fj_1|\biggr)d\tau
+A\rho^2\int_0^t\biggl(\frac1{\ep\cL\cL_s}+\frac\ep\cL\biggr)|\wh\fb|\,d\tau\hfill\cr\hfill
+\frac{\rho^2}{\cL\cL_s^2}\int_0^t|\wh\fd|\,d\tau
+\rho^3\max\biggl(\frac\ep\cL,\frac1{\cL^2\cL_s},\frac1{\ep\cL^2\cL_s^2}\biggr)
\int_0^t|V|\,d\tau.}
$$
Now, we notice that taking $A=c_0\min(1,\ep\cL\cL_s)$ for a sufficiently small constant $c_0$ 
allows to absorb all the terms of the r.h.s. (but the data) by the l.h.s. 
provided we have $0\leq\rho\leq\sqrt{1+n^{-1}}$ 
\begin{equation}
\ep\ll\cL\lesssim1,\quad \ep\cL^2\cL_s^2\gg1\ \hbox{ and }\ \cL^2\cL_s\gg1.
\end{equation}
We thus conclude that for all $0\leq\rho\leq\sqrt{1+n^{-1}},$ we have 
\begin{multline}\label{eq:diffulf2}
|(\wh\fb,\wh\fd,\min(1,\ep\cL\cL_s)\wh\fj_0,\wh\fj_1)(t)|+\rho^2\int_0^t|(\wh\fb,\wh\fd)|\,d\tau
+\frac\cL\ep\min(1,\ep\cL\cL_s)\int_0^t|\wh\fj_0|\,d\tau\\+\frac{\cL\cM}\ep\int_0^t|\wh\fj_1|\,d\tau\leq C |(\wh\fb,\wh\fd,\min(1,\ep\cL\cL_s)\wh\fj_0,\wh\fj_1)(0)|.
\end{multline}
Let us point out that  in the case where $\cL^2\cL_s\approx1$  (even if $\cL\approx\ep$ in fact) 
then the same computation will lead to \eqref{eq:diffulf2}, but only for $0\leq\rho\leq c,$
with $c$ a small enough constant.


\subsubsection{The case $\ep\ll\cL\lesssim\ep^{1/2}$ and $\cL^2\cL_s\approx1$}

As the value $\sqrt{1+n^{-1}}$ will not play any particular role, we fix some $C_1>c$ in the following computations. We want  to  get \eqref{eq:diffulf2} for  $\rho\in[c,C_1].$ To this end, we introduce 
$\zeta_0$ and $\zeta_1$ such that
$$
\wh\zeta_0:= \wh j_0-\frac{\sqrt n}{1+\frac{\rho^2}{n\cL^2\cM}}\wh b
\quad\hbox{and}\quad
\wh\zeta_1:=\wh j_1-\frac\rho{\sqrt n \,\cL\cM}\wh j_0.
$$
Then we discover that $(\wh b,\wh d, \wh\zeta_0,\wh\zeta_1)$ fulfills
$$
\left\{\begin{array}{l}
\d_t\wh b+\rho\wh d=0,\\[1ex]
\d_t\wh d+\rho^2\wh d-\rho\Biggl(1+\Frac{1}{n\bigl(1+\frac{\rho^2}{n\cL^2\cM}\bigr)}\Biggr)\wh b
=\frac\rho{n^{3/2}}\wh\zeta_0+\frac{\cL\cM}n\wh\zeta_1,\\[1ex]
\d_t\wh\zeta_0+\Frac\cL\ep\wh\zeta_0=-\frac{\rho}{\ep\sqrt n}\wh\zeta_1-\sqrt n\frac\rho{1+\frac{\rho^2}{n\cL^2\cM}}\wh d,\\[1ex]
\d_t\wh\zeta_1+\Frac\cL\ep\biggl(\cM-\Frac{\rho^2}{n\cL^2\cM}\biggr)\wh\zeta_1
=\frac\rho{\sqrt n\,\ep\cM}\biggl(1+\frac{\rho^2}{n\cL^2\cM}\biggr)\wh\zeta_0.
\end{array}\right.
$$
Let $\rho$ be in  $[c,C_1].$
For the first two equations,  performing  the standard barotropic estimates (which rely on the use of $\cU_\rho$ defined in \eqref{def:U})
leads to 
\begin{equation}\label{eq:diffulf5}
|(\wh b,\wh d)(t)|+\int_0^t|(\wh b,\wh d)|\,d\tau
\lesssim |(\wh b,\wh d)(0)|+\int_0^t|\wh\zeta_0|\,d\tau+\cL\cL_s\int_0^t|\wh\zeta_1|\,d\tau.
\end{equation}
For $\wh\zeta_0,$ it is obvious that 
\begin{equation}\label{eq:diffulf6}
|\wh\zeta_0(t)|+\frac\cL\ep\int_0^t|\wh\zeta_0|\,d\tau\leq|\wh\zeta_0(0)|+\frac{\rho}{\ep\sqrt n}
\int_0^t|\wh\zeta_1|\,d\tau+\rho\sqrt n\int_0^t|\wh d|\,d\tau,
\end{equation}
and, as  our conditions on $\cL$ and $\cL_s$ guarantee that $\rho\leq\sqrt{n/2}\,\cL\cM$ for small enough $\ep,$ we also have
\begin{equation}\label{eq:diffulf7}
|\wh\zeta_1(t)|+\frac{\cL\cM}{2\ep}\int_0^t|\wh\zeta_1|\,d\tau
\leq|\wh\zeta_1(0)|+\frac\rho{\sqrt n\,\ep\cM}\biggl(1+\frac{\rho^2}{n\cL^2\cM}\biggr)\int_0^t|\wh d|\,d\tau.
\end{equation}
Putting together those three inequalities, we readily get for all $A,B>0,$ observing that  $\rho^2\approx\cL^2\cM\approx1$
$$\displaylines{
|(\wh b,\wh d)(t)|+A\frac{\ep}\cL|\wh\zeta_0(t)|
+B\ep|\wh\zeta_1(t)|+\int_0^t|(\wh b,\wh d)|\,d\tau
+A\int_0^t|\wh\zeta_0|\,d\tau+B\cL\cM\int_0^t|\wh\zeta_1|\,d\tau\hfill\cr\hfill
\lesssim |(\wh b,\wh d)(0)|+A\frac{\ep}\cL|\wh\zeta_0(0)|+B\ep|\wh\zeta_1(0)|
+\cL\cM\int_0^t|\wh\zeta_1|\,d\tau\hfill\cr\hfill+\int_0^t|\wh\zeta_0|\,d\tau
+\frac{A}\cL\int_0^t|\wh\zeta_1|\,d\tau+A\frac{\ep}\cL\int_0^t|\wh d|\,d\tau
+\frac B\cM\int_0^t|\wh\zeta_0|\,d\tau.}
$$
It is now clear that if one takes first $A$ large enough (independently of $\ep$) and  $B$ much larger, 
all the integrals of the r.h.s. may be absorbed 
by the left-hand side, as  $\cM^{-1}\ll\rho,$ 
and we thus get for all $\rho\in[c,C_1]$
\begin{multline}\label{eq:diffulf3a}
|(\wh b,\wh d)(t)|+\frac{\ep}\cL|\wh\zeta_0(t)|
+\ep|\wh\zeta_1(t)|+\int_0^t|(\wh b,\wh d)|\,d\tau\\
+\int_0^t|\wh\zeta_0|\,d\tau+\cL\cM\int_0^t|\wh\zeta_1|\,d\tau
\lesssim |(\wh b,\wh d)(0)|+\frac{\ep}\cL|\wh\zeta_0(0)|+\ep|\wh\zeta_1(0)|.
\end{multline}
Plugging this new inequality in \eqref{eq:diffulf7}, we easily deduce that
$$
|\wh\zeta_1(t)|+\frac{\cL\cM}\ep\int_0^t|\wh\zeta_1|\,d\tau\lesssim   |\wh\zeta_1(0)|+\cL |\wh\zeta_0(0)| +\frac1{\ep\cM}|(\wh b,\wh d)(0)|,
$$
then inserting this information and \eqref{eq:diffulf3a}  in \eqref{eq:diffulf6}, we discover that
$$
|\wh\zeta_0(0)|+\frac{\cL}\ep\int_0^t|\wh\zeta_0|\,d\tau\lesssim      |\wh\zeta_0(0)| +\cL   |\wh\zeta_1(0)|+
\biggl(1+\frac1{\ep\cL\cM^2}\biggr) |(\wh b,\wh d)(0)|.
$$
Therefore, putting \eqref{eq:diffulf3a} and the above two inequalities together, using that 
 $|(\wh b,\wh d,\wh\zeta_0,\wh\zeta_1)|\approx|(\wh b,\wh d,\wh j_0,\wh j_1)|$
 \emph{and assuming in addition that $\cL\lesssim\ep^{1/2},$}
  we conclude that
\begin{multline}\label{eq:diffulf9}
|(\wh b,\wh d,\frac\ep\cL\wh j_0,\wh j_1)(t)|+\int_0^t|(\wh b,\wh d)|\,d\tau+
\frac\cL\ep\int_0^t|\wh\zeta_0|\,d\tau+\frac{\cL\cL_s}\ep \int_0^t|\wh\zeta_1|\,d\tau\\
\lesssim |(\wh b,\wh d,\frac\ep\cL\wh j_0,\wh j_1)(0)|\quad\hbox{for all}\ c\leq\rho\leq C_1.
\end{multline}
Note that due to the expression of $\wh\zeta_1,$ one may replace $\wh\zeta_1$ with $\wh j_1$ of $\wh\fj_1$ in the integral. 
\medbreak
Still in the case $\cL\lesssim\ep^{1/2},$ we claim that we have the following inequality 
\begin{equation}\label{eq:diffulf10}
|\rho \wh j_0(t)|+\frac\cL\ep\int_0^t|\rho\wh \zeta_0|\,d\tau\lesssim
|(\wh b,\wh d,\rho \wh j_0,\frac\ep\cL \wh j_0,\wh j_1)(0)|\quad\hbox{for all}\quad 0\leq\rho\leq C_1,
\end{equation}
which turns out to be crucial in the justification of the asymptotics
toward \eqref{eq:eq1}.
\medbreak
Indeed, inequality \eqref{eq:diffulf6} does not require any assumption on $\rho$ and thus
implies that
$$
|\rho\wh\zeta_0(t)|+\frac\cL\ep\int_0^t|\rho\wh\zeta_0|\,d\tau\leq|\rho\wh\zeta_0(0)|+\frac{\rho^2}{\ep\sqrt n}
\int_0^t|\wh\zeta_1|\,d\tau+\rho^2\sqrt n\int_0^t|\wh d|\,d\tau.$$
For $0\leq\rho \leq c$ (resp. $c\leq\rho\leq C_1$), the last term may be bounded according to \eqref{eq:diffulf2}
(resp. \eqref{eq:diffulf9}).  Regarding the term with $\wh\zeta_1,$ we notice that
$$
\wh\zeta_1=\wh\fj_1+\frac{\rho}{\cL\cL_s\cM}\biggl(\frac{\wh j_0}{\sqrt n}+\wh b\biggr),
$$
hence \eqref{eq:diffulf2} and the fact that $\cL^2\cL_s\approx1$ guarantee that for $0\leq\rho\leq c$
$$\begin{array}{lll}
\Frac{\rho^2}{\ep\sqrt n}
\Int_0^t|\wh\zeta_1|\,d\tau&\lesssim& \cL\biggl(\Frac{\cL\cM}\ep\Int_0^t|\wh\fj_1|\,d\tau
+\frac{\cL^2}\ep\int_0^t\rho^2|\wh j_0+\sqrt n\wh b|\,d\tau\biggr)\\[2ex]
&\lesssim& \cL(1+\frac{\cL^2}\ep)|(\wh b,\wh d,\ep\cL\cL_s\wh j_0,\wh j_1)(0)|.
\end{array}
$$
In the case $\cL\lesssim\ep^{1/2},$ it is obvious that Inequality \eqref{eq:diffulf9} implies
that the above inequality is also true in the range $c\leq\rho\leq C_1,$ which completes the 
proof of \eqref{eq:diffulf10}.


\subsubsection{The case $\cL\approx\ep$ and $\cL_s\ep^2\gtrsim1$}

We saw that if \eqref{eq:stabdiffu} is fulfilled then  \eqref{eq:diffulf2} holds
true  on some small interval $[0,c].$  
So we have to fill in the gap between $c$ and $\sqrt{1+n^{-1}}.$
As the value $\sqrt{1+n^{-1}}$ does not play any particular role, we fix some $C_1>c$ and look for
estimates  if  $\rho\in[c,C_1].$ 
For simplicity, take  $\cL=\kappa\ep/n$ (with $\kappa>1$ owing to \eqref{eq:stabdiffu}).
\smallbreak
Setting  $\wh\zeta_1:=\wh j_1-\frac{1}{\sqrt n\,\cL\cM}\rho\wh j_0=\wh j_1-\frac{\sqrt n}{\kappa\ep\cM}\rho\wh j_0$ as before, we observe that
\begin{equation}\label{eq:diff-lf}\left\{
\begin{array}{l}
\d_t\wh b+\rho\wh d=0,\\[1ex]
\d_t\wh d+\rho^2\wh d-\rho\wh b=\frac\rho{n^{3/2}}\wh j_0+\frac{\kappa\ep\cM}{n^2}\wh\zeta_1,\\[1ex]
\d_t\wh j_0+\bigl(\frac\kappa n+\frac{\rho^2}{\kappa\ep^2\cM}\bigr)\wh j_0+\frac\rho{\sqrt n\ep}\wh\zeta_1=\frac\kappa{\sqrt n}\wh b,\\[1ex]
\d_t\wh\zeta_1+\bigl(\frac{\kappa\cM}n-\frac{\rho^2}{\kappa\ep^2\cM}\bigr)\wh\zeta_1=
\frac\rho{\sqrt n\,\ep\cM}
\bigl(1+\frac{\rho^2n}{\kappa^2\ep^2\cM}\bigr)\wh j_0-\frac\rho{\ep\cM}\wh b.
\end{array}\right.
\end{equation}
Let us focus on the subsystem corresponding to  the first three equations, namely
\begin{equation}\label{eq:sub}
\left\{\begin{array}{l}
\d_t\wh b+\rho\wh d=0,\\[1ex]
\d_t\wh d+\rho^2\wh d-\rho\wh b-\frac\rho{n^{3/2}}\wh j_0=\wh f,\\[1ex]
\d_t\wh j_0+\bigl(\frac\kappa n+\frac{\rho^2}{\kappa\ep^2\cM}\bigr)\wh j_0-\frac\kappa{\sqrt n}\wh b=\wh g.
\end{array}\right.
\end{equation}
If we have the stronger condition $\ep^2\cM\to\infty$ then we rewrite System \eqref{eq:sub} as follows
\begin{equation}\label{eq:diff-mf2} 
\frac d{dt}\begin{pmatrix}\wh b\\\wh d\\\wh j_0\end{pmatrix}
+\begin{pmatrix} 0&\rho&0\\
-\rho&\rho^2&-\frac\rho{n^{3/2}}\\
-\frac\kappa{\sqrt{n}}&0&\frac{\kappa}{n}
\end{pmatrix}\begin{pmatrix}\wh b\\\wh d\\\wh j_0
\end{pmatrix}=\begin{pmatrix}0\\\wh f\\-\frac{\rho^2}{\kappa\ep^2\cM}\wh j_0+\wh g
\end{pmatrix}\cdotp
\end{equation}
The eigenvalues of the matrix $M_\rho$ in the left-hand side are the roots of 
the polynomial $-X^3+a_1(\rho)X^2-a_2(\rho)X+a_3(\rho)$ with 
$$
a_1(\rho)=\frac\kappa{n}+\rho^2,\quad
a_2(\rho)=\Bigl(1+\frac\kappa n\Bigr)\rho^2,\quad
a_3(\rho)=\Bigl(1+\frac1n\Bigr)\frac{\rho^2\kappa}{n}\cdotp
$$
According to Routh-Hurwitz criterion, those roots have \emph{positive} real part if and only if 
$$
a_1(\rho)>0,\qquad\left|\begin{array}{cc} a_1(\rho)&1\\a_3(\rho)&a_2(\rho)\end{array}\right|>0
\quad\hbox{and}\quad
\left|\begin{array}{ccc} a_1(\rho)&1&0\\a_3(\rho)&a_2(\rho)&a_1(\rho)\\
0&0&a_3(\rho) \end{array}\right|>0.
$$
As $a_1(\rho)$ and $a_3(\rho)$ are positive, it suffices to check  the second  condition, that is
$$a_1(\rho)a_2(\rho)-a_3(\rho)=\Bigl(1+\frac\kappa n\Bigr)\rho^4+\frac{\rho^2\kappa}{n^2}(\kappa-1)>0,
$$
and this is indeed the case for all $\rho>0,$ as $\kappa>1.$ 
\medbreak
In particular,   all the eigenvalues of the matrix $M_\rho$ have positive real part
 if we assume $\rho$ to belong to the compact set $[c,C_1].$
Therefore (see \cite{DD}) there exist   two  positive constants $c_2$ and $C_2$ 
depending only on $c$ and $C_1,$  so that the matrix $M_\rho$ satisfies
$$
\bigl|e^{-tM_\rho}\bigr|\leq C_2 e^{-c_2t}\quad\hbox{for all }\ t\geq0\ \hbox{ and }\ \rho\in[c,C_1].
$$
By taking advantage of Duhamel's formula, we thus deduce that
$$
|(\wh b,\wh d,\wh j_0)(t)|+\int_0^t|(\wh a,\wh d,\wh j_0)|\,d\tau
\lesssim |(\wh b,\wh d,\wh j_0)(0)|+\int_0^t|(\wh f,\wh g)|\,d\tau
+\frac1{\ep^2\cM}\int_0^t|\wh j_0|\,d\tau.
$$
Of course, owing to the assumption $\ep^2\cM\to\infty,$ the last term of the r.h.s. may be 
absorbed by the l.h.s., for $\ep$ going to $0.$
So we get
\begin{equation}\label{eq:diffulf3}
|(\wh b,\wh d,\wh j_0)(t)|+\int_0^t|(\wh b,\wh d,\wh j_0)|\,d\tau\lesssim |(\wh b,\wh d,\wh j_0)(0)|+\int_0^t|(\wh f,\wh g)|\,d\tau.
\end{equation}
In the case where $\ep^2\cM$ does not go to $\infty$ then we have to proceed slightly differently.
If  we assume (for simplicity) that 
 $\ep^2\cM$ tends to some $m>0,$
then the matrix $M_\rho$ in \eqref{eq:diff-mf2} has to be changed in
$$
N_\rho=\begin{pmatrix} 0&\rho&0\\
-\rho&\rho^2&-\frac\rho{n^{3/2}}\\
-\frac\kappa{\sqrt n}&0&\frac{\kappa}{n}+\frac{\rho^2}{\kappa m}
\end{pmatrix}\cdotp
 $$
The above analysis based on Routh-Hurwitz theorem still holds 
 as  the additional term has `the good sign', and one may conclude, as before,
 that \eqref{eq:diffulf3} is satisfied for all $\rho\in[c,C_1].$
 \medbreak
 In every case $\ep^2\cM\gtrsim1,$ resuming to \eqref{eq:diff-lf}, 
 Inequality \eqref{eq:diffulf3} allows us to get for all $\rho\in[c,C_1]$
\begin{equation}\label{eq:diffulf4}
|(\wh b,\wh d,\wh j_0)(t)|+\int_0^t|(\wh b,\wh d,\wh j_0)|\,d\tau\lesssim |(\wh b,\wh d,\wh j_0)(0)|+\ep\cM\int_0^t|\wh\zeta_1|\,d\tau.
\end{equation}  
 Next, from the equation of $\wh\zeta_1,$ we readily get
 $$
 |\wh\zeta_1(t)|+\cM\int_0^t|\wh\zeta_1|\,d\tau\lesssim|\wh\zeta_1(0)|+(\ep\cM)^{-1}\int_0^t|(\wh b,\wh j_0)|\,d\tau.
 $$
 Hence, adding up to Inequality \eqref{eq:diffulf4}, we conclude that  \eqref{eq:diffulf2} is also true   for all $\rho\in[c,C_1].$ 
 This completes the proof of estimates in the low frequency regime $\rho\in[0,C_1].$


\subsection{Estimates for middle frequencies}
\subsubsection*{The case $\liminf \cM=+\infty$}
As in the previous paragraph, introduce   $\wh\zeta_1:=\wh j_1-\frac\rho{\sqrt n\,\cL\cM}\,\wh j_0.$
The system fulfilled by $(\wh b,\wh d,\wh j_0,\wh\zeta_1)$ reads
\begin{equation}\label{eq:diff-mf}\left\{
\begin{array}{l}
\d_t\wh b+\rho\wh d=0,\\[1ex]
\d_t\wh d+\rho^2\wh d-\rho\wh b=\frac\rho{n^{3/2}}\wh j_0+\frac{\cL\cM}n\wh\zeta_1,\\[1ex]
\d_t\wh j_0+\frac1\ep\bigl(\cL+\frac{\rho^2}{n\cL\cM}\bigr)\wh j_0+\frac\rho{\sqrt n\,\ep}\wh\zeta_1=\sqrt n\frac\cL\ep\wh b,\\[1ex]
\d_t\wh\zeta_1+\bigl(\frac{\cL\cM}\ep-\frac{\rho^2}{n\ep\cL\cM}\bigr)\wh\zeta_1=\frac{\rho}{\sqrt n\cL\cM\ep}
\bigl(\cL+\frac{\rho^2}{n\cL\cM}\bigr)\wh j_0-\frac\rho{\ep\cM}\wh b.
\end{array}\right.
\end{equation}
The subsystem corresponding to  the first three equations is
\begin{equation}\label{eq:sub1}
\left\{\begin{array}{l}
\d_t\wh b+\rho\wh d=0,\\[1ex]
\d_t\wh d+\rho^2\wh d-\rho\wh b-\frac\rho{n^{3/2}}\wh j_0=\wh f,\\[1ex]
\d_t\wh j_0+\frac\cL\ep\bigl(1+\frac{\rho^2}{n\cL^2\cM}\bigr)\wh j_0-\sqrt n\frac\cL\ep\wh b=\wh g,
\end{array}\right.
\end{equation}
with \begin{equation}\label{eq:diffumf00}\wh f=\frac{\cL\cM}n\,\wh\zeta_1\quad\hbox{and}\quad
\wh g=-\frac\rho{\sqrt n\,\ep}\,\wh\zeta_1.\end{equation}
Assume that  $(\wh f,\wh g)\equiv(0,0)$ for a while and set
 \begin{equation}\label{def:U}
 \cU_\rho^2:=2|(\wh b,\wh d)|^2-2\rho\Re(\wh b\,\overline{\wh d})+|\rho\wh b|^2.
 \end{equation}
On one hand, we have
\begin{equation}\label{eq:diff-mf0}
\frac12\frac d{dt}\cU_\rho^2+\rho^2|\wh b|^2+\rho^2|\wh d|^2=\frac\rho{n^{3/2}}\Re\bigl((2\wh d-\rho\wh b)\,\overline{\wh j_0}\bigr),
\end{equation}
and on the other hand, 
$$
\frac12\frac d{dt}|\wh j_0|^2+\frac\cL\ep\biggl(1+\frac{\rho^2}{n\cL^2\cM}\biggr)|\wh j_0|^2=\sqrt n\frac\cL\ep\Re(\wh b\,\overline{\wh j_0}).
$$
Therefore
$$
\frac12\frac d{dt}\biggl(\cU_\rho^2+\frac\ep{n^2\cL}|\rho\wh j_0|^2\biggr)
+\rho^2|(\wh b,\wh d)|^2+\frac{\rho^2}{n^2}\biggl(1+\frac{\rho^2}{n\cL^2\cM}\biggr)|\wh j_0|^2
=2\frac{\rho}{n^{3/2}}\Re(\wh d\,\overline{\wh j_0}).
$$
Now, by using the fact that
$$
2\frac{\rho}{n^{3/2}}\Re(\wh d\:\overline{\wh j_0})\leq\frac1{An}|\wh d|^2+\frac{A\rho^2}{n^2}|\wh j_0|^2,
$$
and by taking $A=3/4,$ we conclude that for $\rho^2\geq\frac{16n}{3(4n^2-1)},$  we have
$$
\frac d{dt}\bigl(\cU_\rho^2+\frac\ep{n^2\cL}|\rho\wh j_0|^2\bigr)
+\frac{\rho^2}{2n^2}|(\wh b,\wh d,\wh j_0)|^2\leq 0,
$$
whence, because  $\frac{16n}{3(4n^2-1)}\leq 1+\frac1n,$ we get
 for some universal positive constants $c_0$ and $C$
$$
|(\rho\wh b,\wh d)(t)|+\sqrt{\frac\ep\cL}|\rho\wh j_0(t)|\leq C e^{-c_0t}
\biggl(|(\rho\wh b,\wh d)(0)|+\sqrt{\frac\ep\cL}|\rho\wh j_0(0)|\biggr)\quad\hbox{for}\quad \rho\geq\sqrt{1+n^{-1}}\cdotp
$$
Resuming to the equation fulfilled by $\wh j_0$ in \eqref{eq:sub1}, 
the above inequality implies (still assuming that $\wh f=\wh g=0$) that
$$
\frac\cL\ep\biggl(1+\frac{\rho^2}{n\cL^2\cM}\biggr)\int_0^t|\rho\wh j_0|\,d\tau
\leq|\rho\wh j_0(0)|+\frac{\sqrt n\cL}{\ep}\int_0^t|\rho b|\,d\tau
\lesssim \sqrt{\frac\cL\ep} |\rho\wh j_0(0)|+\frac\cL{\ep}|(\wh d,\rho\wh b)(0)|,
$$
then plugging this inequality in the equation for $\wh d,$ we get in addition
$$
\rho^2\int_0^t|\wh d|\,d\tau\lesssim |\wh d(0)|+|\rho\wh b(0)|+\sqrt{\frac{\ep}\cL}|\rho\wh j_0(0)|.
$$
Repeating the above computations in the case of general source terms $\wh f$ and $\wh g,$ 
we conclude that the solution $(b,d,j_0)$ to \eqref{eq:sub1} satisfies
for all $\rho\geq\sqrt{1+n^{-1}},$ assuming only that $\cL\gtrsim\ep$
 $$
\displaylines{
|(\rho\wh b,\wh d)(t)|+\sqrt{\frac\ep\cL}|\rho\wh j_0(t)|
+\int_0^t|(\rho\wh b,\rho^2\wh d)|\,d\tau+\frac\cL\ep\biggl(1+\frac{\rho^2}{n\cL^2\cM}\biggr)
\int_0^t|\rho\wh j_0|\,d\tau\hfill\cr\hfill\leq C
\biggl(|(\rho\wh b,\wh d)(0)|+\sqrt{\frac\ep\cL}|\rho\wh j_0(0)|
+\int_0^t\Bigl(|\wh f|+\sqrt{\frac\ep\cL}|\rho\wh g|\Bigr)\,d\tau
\biggr)\cdotp}$$
Resuming to the value of $\wh f$ and $\wh g$ in \eqref{eq:diffumf00}, we thus get
for  $\rho\geq\sqrt{1+n^{-1}}$ and  $\cL\gtrsim\ep$
\begin{multline}\label{eq:diffumf}
|(\rho\wh b,\wh d)(t)|+\sqrt{\frac\ep\cL}|\rho\wh j_0(t)|
+\int_0^t|(\rho\wh b,\rho^2\wh d)|\,d\tau+\frac\cL\ep\biggl(1+\frac{\rho^2}{n\cL^2\cM}\biggr)
\int_0^t|\rho\wh j_0|\,d\tau\\\leq C
\biggl(|(\rho\wh b,\wh d)(0)|+\sqrt{\frac\ep\cL}|\rho\wh j_0(0)|
+\biggl(\cL\cM+\frac{\rho^2}{\sqrt{\cL\ep}}\biggr)
\int_0^t|\wh\zeta_1|\,d\tau\biggr)\cdotp
\end{multline}
Next, it is clear that  the equation for $\wh\zeta_1$ implies that whenever
$\rho\leq\sqrt{\frac n2}\,\cL\cM$
\begin{equation}\label{eq:diff-mf5}
|\wh\zeta_1(t)|+\frac{\cL\cM}{2\ep}\int_0^t|\wh\zeta_1|\,d\tau\leq|\wh\zeta_1(0)|
+\frac1{\sqrt n\ep\cM}\biggl(1+\frac{\rho^2}{n\cL^2\cM}\biggr)\int_0^t|\rho\wh j_0|\,d\tau
+\frac{1}{\ep\cM}\int_0^t|\rho\wh b|\,d\tau.
\end{equation}
Hence, we get if $\cM$ is large enough
and $\rho^2\ll\cL^{3/2}\cM^2\ep^{1/2}$
\begin{equation}\label{eq:diff-mf6}
|\wh\zeta_1(t)|+\frac{\cL\cM}{\ep}\int_0^t|\wh\zeta_1|\,d\tau
\leq C\biggl(|\wh\zeta_1(0)|+\frac{1}{\ep\cM}\Bigl(|(\rho\wh b,\wh d)(0)|+\sqrt{\frac\ep\cL}\,|\rho\wh j_0(0)|\Bigr)\biggr)\cdotp
\end{equation}
Then plugging that inequality in \eqref{eq:diffumf} implies that
\begin{multline}\label{eq:diff-mf7}
|(\rho\wh b,\wh d)(t)|+\sqrt{\frac\ep\cL}|\rho\wh j_0(t)|+\rho^2\!\int_0^t\!|\wh d|\,d\tau
+\int_0^t|\rho\wh b|\,d\tau+\frac\cL\ep\biggl(1+\frac{\rho^2}{n\cL^2\cM}\biggr)
\int_0^t|\rho\wh j_0|\,d\tau\\\leq
C\biggl(|(\rho\wh b,\wh d)(0)|+\sqrt{\frac\ep\cL}|\rho\wh j_0(0)|
+\Bigl(\ep+\frac{\ep^{1/2}\rho^2}{\cM\cL^{3/2}}\Bigr)|\wh\zeta_1(0)|\biggr),
\end{multline}
whenever $1+1/n\leq\rho^2\ll\cL^{3/2}\cM^2\ep^{1/2}.$
\medbreak
Here is another method that gives decay estimates
 in the range  $\cL\sqrt\cM\ll\rho\ll\cL\cM$ if  $\cM$ is large enough. 
{}From the first two equations of \eqref{eq:diff-mf}, we have
\begin{equation}\label{eq:diff-mf1b}
|(\rho\wh b,\wh d)(t)|+\rho^2\int_0^t|\wh d|\,d\tau+\int_0^t|\rho\wh b|\,d\tau
\lesssim |(\rho\wh b,\wh d)(0)|\\+\frac\rho{n^{3/2}}\int_0^t|\wh j_0|\,d\tau+\frac{\cL\cM}n\int_0^t|\wh\zeta_1|\,d\tau.
\end{equation}
The equations for $\wh j_0$ and $\wh\zeta_1$ give, if $\rho\leq\sqrt{\frac n2}\,\cL\cM$
\begin{equation}\label{eq:diff-mf2b}
|\wh j_0(t)|+\frac1\ep\Bigl(\cL+\frac{\rho^2}{n\cL\cM}\Bigr)\int_0^t|\wh j_0|\,d\tau\leq|\wh j_0(0)|
+\frac\rho{\sqrt n\,\ep}\int_0^t|\wh\zeta_1|\,d\tau+\frac{\cL\sqrt n}\ep\int_0^t|\wh b|\,d\tau,
\end{equation}
\begin{equation}\label{eq:diff-mf3b}
 |\wh\zeta_1(t)|\!+\!\frac{\cL\cM}{2\ep}\int_0^t|\wh\zeta_1|\,d\tau\leq|\wh\zeta_1(0)|+\frac{\rho}{\sqrt n\,\cL\cM\ep}
\biggl(\cL\!+\!\frac{\rho^2}{n\cL\cM}\biggr)\int_0^t|\wh j_0|\,d\tau\!+\!\frac\rho{\ep\cM}\int_0^t|\wh b|\,d\tau.
\end{equation}
Plugging \eqref{eq:diff-mf2b} in \eqref{eq:diff-mf3b}, we discover if  $\rho\ll\cL\cM$ that 
\begin{equation}\label{eq:diff-mf2c}
|\wh\zeta_1(t)|+\frac{\cL\cM}{4\ep}\int_0^t|\wh\zeta_1|\,d\tau
\leq|\wh\zeta_1(0)|+\frac{\rho}{\cL\cM}|\wh j_0(0)|+\frac{1}{\ep\cM}\int_0^t\rho|\wh b|\,d\tau.
\end{equation}
Inserting that  inequality in \eqref{eq:diff-mf1b}, 
we conclude  that the last term of \eqref{eq:diff-mf1b} may be absorbed by the
l.h.s. if $\cM$ is large enough. Now, Inequality \eqref{eq:diff-mf2b} guarantees that
$$\displaylines{
\frac\rho{n^{3/2}}\int_0^t|\wh j_0|\,d\tau\leq \frac1{\sqrt n}\biggl(\frac{\rho\ep\cL\cM}{n\cL^2\cM+\rho^2}\biggr)|\wh j_0(0)|
\hfill\cr\hfill+ \biggl(\frac{\rho^2\cL\cM}{n^2\cL^2\cM+\rho^2n}\biggr)\int_0^t|\wh\zeta_1|\,d\tau
+ \biggl(\frac{\cL^2\cM}{n\cL^2\cM+\rho^2}\biggr)\int_0^t\rho|\wh b|\,d\tau.}
$$
Again, resuming to \eqref{eq:diff-mf1b}, we see that the second term in the r.h.s. may be absorbed by the l.h.s.
This is also the case of the last one if $\rho^2\gg \cL^2\cM.$
If all those conditions are fulfilled then we end up if $\cL\sqrt\cM\ll\rho\leq\sqrt\frac n2\,\cL\cM$ with 
\begin{multline}\label{eq:diff-mf4b}
|(\rho\wh b,\wh d)(t)|+\rho^2\int_0^t|\wh d|\,d\tau+\rho\int_0^t|\wh b|\,d\tau
+\rho \int_0^t|\wh j_0|\,d\tau\\+\cL\cM\int_0^t|\wh\zeta_1|\,d\tau
\lesssim |(\rho\wh b,\wh d)(0)|+\frac{\ep\cL\cM}\rho|\wh j_0(0)|
+\ep|\wh\zeta_1(0)|.
\end{multline}
Resuming to \eqref{eq:diff-mf2b} and \eqref{eq:diff-mf2c}, we thus easily deduce 
first that
$$
|\wh \zeta_1(t)|+\frac{\cL\cM}\ep\int_0^t|\wh\zeta_1|\,d\tau
\lesssim |\wh\zeta_1(0)|+|\wh j_0(0)|+\frac1{\ep\cM} |(\rho\wh b,\wh d)(0)|,
$$
and next that
$$
|\wh j_0(t)|+\frac1\ep\Bigl(\cL+\frac{\rho^2}{\cL\cM}\Bigr)\int_0^t|\wh j_0|\,d\tau
\lesssim |\wh j_0(0)|+\frac\rho{\cL\cM}|\wh\zeta_1(0)|+\textstyle\frac\cL{\ep\rho}|(\rho\wh b,\wh d)(0)|.
$$
Our estimates and the definition of $\zeta_1$ allow us to change $\zeta_1$ to $j_1$ in \eqref{eq:diff-mf4b}. 
So finally, in the case  $\ep\cM\gtrsim1$   we get for some large enough $C_1$ and small enough $c$ independent of $\ep$
\begin{multline}\label{eq:diff-hf7}
|(\rho\wh b,\wh d,(1\!+\!\textstyle\frac{\ep\cL\cM}\rho)\wh j_0,\wh j_1)(t)|
+\Int_0^t|\rho\wh b|\,d\tau 
+\rho^2\int_0^t|\wh d|\,d\tau\\+\textstyle\frac\rho{\ep\cL\cM}(1\!+\!\textstyle\frac{\ep\cL\cM}\rho)
\Int_0^t|\rho\wh j_0|\,d\tau+
 \frac{\cL\cM}\ep\int_0^t|\wh\zeta_1|\,d\tau
\lesssim  |(\rho\wh b,\wh d,(1\!+\!\textstyle\frac{\ep\cL\cM}\rho)\wh j_0,\wh j_1)(0)|,
\end{multline}
 whenever  $C_1\cL\sqrt\cM\leq \rho\leq c\cL\cM.$


\subsubsection*{The case  $\ep^2\cM\leq 1/2$ and $\cL\gg\ep$} 
 Let $\zeta_0:=j_0-\sqrt n\, b.$ We start from 
$$\left\{
\begin{array}{l}
\d_t\wh b+\rho\wh d=0,\\[1ex]
\d_t\wh d+\rho^2\wh d-\rho\wh b=\frac{\cL\cM}n\wh j_1,\\[1ex]
\d_t\wh\zeta_0+\frac\cL\ep\wh\zeta_0+\frac\rho{\sqrt n\,\ep}\wh j_1-\sqrt n\,\rho\wh d=0,\\[1ex]
\d_t\wh j_1+\frac{\cL\cM}\ep\wh j_1-\frac\rho{\sqrt n\,\ep}\wh\zeta_0-\frac\rho\ep\wh b=0.
\end{array}\right.
$$
In order to show the exponential decay, we set
 \begin{equation}\label{eq:Ur}
 \cU_\rho^2:=2|\wh b|^2+2|\wh d|^2+|\rho\wh b|^2-2\rho\Re(\wh b\overline{\wh d})\quad\hbox{and}\quad
\cJ_\rho^2:=|\wh\zeta_0|^2+|\wh j_1|^2.\end{equation}
We easily get for all $K\geq0,$
\begin{multline}\label{eq:diff-mf8}
\frac12\frac d{dt}\Bigl(\cU_\rho^2+K\cJ_\rho^2\Bigr)
+\rho^2|(\wh b,\wh d)|^2+K\frac\cL\ep\biggl(|\wh\zeta_0|^2
+{\cM}|\wh j_1|^2\biggr)\\=
2\frac{\cL\cM}n\Re(\wh j_1\,\overline{\wh d})+\rho\biggl(\frac K\ep-\frac{\cL\cM}n\biggr)\Re(\wh j_1\,\overline{\wh b})
+K\sqrt n\Re (\rho\wh d\,\overline{\wh\zeta_0}).\end{multline}
It is thus natural to take $K=\frac\ep n\cL\cM$ to cancel out the second term of the r.h.s. 
For the first and the last terms, we write that
$$\begin{array}{l}
K\sqrt n\Re\bigl(\rho\wh d\,\overline{\wh\zeta_0}\bigr)\leq \frac14\rho^2|\wh d|^2
+nK^2|\wh j_0-\sqrt n\,\wh b|^2,\\[1ex]
\frac{2\cL\cM}n\Re(\wh j_1\,\overline{\wh d})=\frac{2K}{\ep}\Re(\wh j_1\,\overline{\wh d})\leq\frac23\rho^2|\wh d|^2
+\frac{3K^2}{2\ep^2\rho^2}|\wh j_1|^2.\end{array}$$
Note that the last terms above may be absorbed by the l.h.s. of \eqref{eq:diff-mf8} if, say  
$$
nK\leq\frac\cL{2\ep}\quad\hbox{and}\quad \frac{2K}{\ep}\leq\cL\cM\rho^2.
$$
Given the value of $K,$ the first condition is equivalent to $\ep^2\cM\leq1/2,$ whereas the second one
means that  $\rho^2n\geq 2.$  Under this latter condition, we thus end up with 
\begin{equation}\label{eq:diff-hf1}
\frac d{dt}\Bigl(\cU_\rho^2+\frac{\ep\cL\cM}n\cJ_\rho^2\Bigr)
+\frac16\rho^2|(\wh b,\wh d)|^2+\frac{\cL^2\cM}n\biggl(|\wh\zeta_0|^2
+\frac16{\cM}|\wh j_1|^2\biggr)\leq0,
\end{equation}
which implies, according to  \eqref{eq:stabdiffu}, the following exponential decay estimate for some  small enough $\kappa>0$
\begin{equation}\label{eq:diff-hf2}
\cU_\rho^2(t)+\ep\cL\cM\cJ_\rho^2(t)
\leq e^{-\kappa t} \Bigl(\cU_\rho^2(0)+\frac{\ep\cL\cM}n\cJ_\rho^2(0)\Bigr)\quad\hbox{if }\ \rho\geq\sqrt{2/n}.
\end{equation}
To exhibit the parabolic decay for $d,$  we  introduce
$\wh\zeta_1:=\wh j_1-(\sqrt n\cL\cM)^{-1}\rho\wh j_0,$ and get
$$\left\{
\begin{array}{l}
\d_t\wh b+\rho\wh d=0,\\[1ex]
\d_t\wh d+\rho^2\wh d-\rho\bigl(1+\frac1n\bigr)\wh b=\frac{\cL\cM}n\wh\zeta_1+\frac\rho{n^{3/2}}\wh\zeta_0,\\[1ex]
\d_t\wh\zeta_0+\frac1\ep\bigl(\cL+\frac{\rho^2}{n\cL\cM}\bigr)\wh\zeta_0+\frac\rho{\sqrt n\,\ep}\wh\zeta_1=\sqrt n\,\rho\wh d-\frac{\rho^2}{\sqrt n\,\ep\cL\cM}\wh b,\\[1ex]
\d_t\wh\zeta_1+\bigl(\frac{\cL\cM}\ep-\frac{\rho^2}{n\ep\cL\cM}\bigr)\wh\zeta_1=\frac{\rho}{\sqrt n\,\cL\cM\ep}
\bigl(\cL+\frac{\rho^2}{n\cL\cM}\bigr)\wh\zeta_0+\frac{\rho^3}{n\ep\cL^2\cM^2}\wh b.
\end{array}\right.
$$
We thus have for $\sqrt{2/n}\leq \rho\leq \sqrt{n/2}\,\cL\cM$
$$
\displaylines{
\int_0^t|\rho\wh b|\,d\tau+\rho^2\int_0^t|\wh d|\,d\tau\lesssim|(\rho\wh b,\wh d)(0)|+\frac{\cL\cM}n\int_0^t|\wh\zeta_1|\,d\tau
+\frac\rho n\int_0^t|\wh\zeta_0|\,d\tau,\cr
\frac1\ep\biggl(\cL+\frac{\rho^2}{n\cL\cM}\biggr)\int_0^t|\wh\zeta_0|\,d\tau\leq|\wh\zeta_0(0)|+\frac\rho{\sqrt n\,\ep}\int_0^t|\wh\zeta_1|\,d\tau
+\sqrt n\,\rho\int_0^t|\wh d|\,d\tau+\frac{\rho}{\sqrt n\,\ep\cL\cM}\int_0^t|\rho\wh b|\,d\tau,\cr
\frac{\cL\cM}{2\ep}\int_0^t|\wh\zeta_1|\,d\tau
\leq|\wh\zeta_1(0)|+\frac{\rho}{\sqrt n\,\ep\cL\cM}\biggl(\cL+\frac{\rho^2}{n\cL\cM}\biggr)\int_0^t|\wh\zeta_0|\,d\tau
+\frac{\rho^2}{n\ep\cL^2\cM^2}\int_0^t|\rho\wh b|\,d\tau.}
$$
Combining the inequalities for $\wh\zeta_0$ and $\wh\zeta_1,$ we easily get if  $\rho\leq c\cL\cM$ with $c$ small enough
$$\displaylines{
\cL\cM\int_0^t|\wh\zeta_1|\,d\tau\lesssim\ep|\wh\zeta_1(0)|
+\frac\rho{\cL\cM}\biggl(\ep|\wh\zeta_0(0)|+\rho\ep\int_0^t|\wh d|\,d\tau
+\frac{\rho}{\cL\cM}\int_0^t|\rho\wh b|\,d\tau\biggr),\cr
\biggl(\cL+\frac{\rho^2}{\cL\cM}\biggr)\int_0^t|\wh\zeta_0|\,d\tau\lesssim\ep|\wh\zeta_0(0)|
+\frac{\rho\ep}{\cL\cM}|\wh\zeta_1(0)|+\rho\ep\int_0^t|\wh d|\,d\tau
+\frac{\rho}{\cL\cM}\int_0^t|\rho\wh b|\,d\tau.}
$$
Now, the exponential decay pointed out in \eqref{eq:diff-hf2} allows to bound the last terms above, and we get
\begin{eqnarray}\label{eq:diff-mf10}
&\cL\cM\Int_0^t|\wh\zeta_1|\,d\tau\lesssim  \cU_\rho(0)+\sqrt{\ep\cL\cM}\cJ_\rho(0)+\frac{\ep}{\cL\cM}\int_0^t|\rho^2\wh d|\,d\tau,\\\label{eq:diff-mf11}
&\quad\biggl(\cL\!+\!\Frac{\rho^2}{\cL\cM}\biggr)\int_0^t|\wh\zeta_0|\,d\tau\lesssim\ep|(\wh\zeta_0,\wh\zeta_1)(0)|
+\frac{\rho}{\cL\cM}\bigl( \cU_\rho(0)+\sqrt{\ep\cL\cM}\cJ_\rho(0)\bigr)
+\rho\ep\int_0^t|\wh d|\,d\tau,
\end{eqnarray}
whereas using $\cU_\rho^2$ allows to get directly
$$
\rho^2\int_0^t|\wh d|\,d\tau\lesssim|(\rho\wh b,\wh d)(0)|+\cL\cM\int_0^t|\wh j_1|\,d\tau.
$$
Using the definition of $\wh j_1$ and, again, Inequality \eqref{eq:diff-hf2}, we may 
replace $\wh j_1$ with $\wh\zeta_1$ as follows
\begin{equation}\label{eq:diff-mf12}
\rho^2\int_0^t|\wh d|\,d\tau\lesssim \cU_\rho(0)+\sqrt{\ep\cL\cM}\cJ_\rho(0) +\cL\cM\int_0^t|\wh\zeta_1|\,d\tau
+\int_0^t|\rho\wh\zeta_0|\,d\tau.
\end{equation}
Then plugging \eqref{eq:diff-mf10} and \eqref{eq:diff-mf11} in \eqref{eq:diff-mf12} and observing that $\ep\ll\cL\cM,$ we get
$$\displaylines{
\rho^2\int_0^t|\wh d|\,d\tau\lesssim \cU_\rho(0)+\sqrt{\ep\cL\cM}\cJ_\rho(0)\hfill\cr\hfill
+\frac{\rho\cL\cM}{\cL^2\cM+\rho^2}\biggl(\ep|(\wh\zeta_0,\wh\zeta_1)(0)|+\frac\rho{\cL\cM}\bigl( \cU_\rho(0)+\sqrt{\ep\cL\cM}\cJ_\rho(0)\bigr)
+\rho\ep\int_0^t|\wh d|\,d\tau\biggr)\cdotp}
$$
Because we assumed that $\cL\gg\ep,$ the last term may be absorbed by the l.h.s. Using in addition that $\ep^2\cM\lesssim1,$
we end up with 
$$
\rho^2\int_0^t|\wh d|\,d\tau\lesssim \cU_\rho(0)+\sqrt{\ep\cL\cM}\cJ_\rho(0).
$$
Then resuming to \eqref{eq:diff-mf10}, \eqref{eq:diff-mf11}, we obtain
\begin{equation}\label{eq:diff-hf4}
\rho\int_0^t|\wh\zeta_0|\,d\tau+\cL\cM\int_0^t|\wh\zeta_1|\,d\tau\lesssim  \cU_\rho(0).
\end{equation}
Obviously, this inequality implies that
\begin{equation}\label{eq:diff-hf3}
\cL\cM\int_0^t|\wh j_1|\,d\tau\lesssim  \cU_\rho(0)
+\sqrt{\ep\cL\cM}\,\cJ_\rho(0)+\ep|(\wh\zeta_0,\wh\zeta_1)(0)|.
\end{equation}
Of course, we get the same inequality if replacing $\zeta_0$ and $\zeta_1$ with $j_0$ and $j_1.$
So one can conclude that  for  $\sqrt{2/n}\leq\rho\leq c\cL\cM,$ we have \eqref{eq:diff-hf2} and
\begin{equation}\label{eq:diff-hf6}
\rho^2\int_0^t|\wh d|\,d\tau+\cL\cM\int_0^t|\wh j_1|\,d\tau+\rho\int_0^t|\wh j_0|\,d\tau\lesssim |(\rho\wh b,\wh d)(0)|+\sqrt{\ep\cL\cM}|(\wh j_0,\wh j_1)(0)|.
\end{equation}


\subsection{High frequencies} 

We eventually come to the proof of  decay estimates  for $\rho\geq c\cL\cM,$ where $c$ is some given positive constant.
We shall use that fact  that  the systems satisfied by $(b,d)$ and by $(j_0,j_1),$ respectively, 
tend to be uncoupled for $\rho\to+\infty.$ 
As regards $(b,d),$ the classical approach for the barotropic Navier-Stokes equation, based on the study 
of 
$$\cU_\rho^2:=2|(\wh b,\wh d)|^2-2\rho\Re(\wh b\,\overline{\wh d})+|\rho\wh b|^2,$$ 
  guarantees, if $\rho\geq c,$  that
\begin{equation}\label{eq:hfdiff1}
|(\rho\wh b,\wh d)(t)|+\rho^2\int_0^t|\wh d|\,d\tau+\int_0^t\rho|\wh b|\,d\tau
\lesssim |(\rho\wh b,\wh d)(0)|+\frac{\cL\cM}n \int_0^t|\wh j_1|\,d\tau.
\end{equation}
Next, from the system fulfilled by  $(\wh j_0,\wh j_1),$ we get
$$
\displaylines{\frac12\frac d{dt}\Bigl(|\wh j_0|^2+|\wh j_1|^2\Bigr)+\frac\cL\ep|\wh j_0|^2+\frac{\cL{\cM}}{\ep}|\wh j_1|^2=\sqrt n\frac\cL\ep\Re(\wh b\,\overline{\wh j_0}),\cr
\frac d{dt}\Re(\wh j_0\,\overline{\wh j_1})+\frac\cL\ep(1+{\cM})\Re(\wh j_0\,\overline{\wh j_1})+\frac{\rho}{\ep\sqrt n}|\wh j_1|^2-\frac{\rho}{\ep\sqrt n}|\wh j_0|^2=\sqrt n\frac\cL\ep\Re(\wh b\,\overline{\wh j_1}).}
$$
Therefore, for any $\kappa>0$ 
$$\displaylines{
\frac12\frac d{dt}\biggl(|\wh j_0|^2+|\wh j_1|^2-
\frac{2\kappa\cL{\cM}}{\rho}\Re(\wh j_0\,\overline{\wh j_1})\biggr)
+\Bigl(1+\frac\kappa{\sqrt n}{\cM}\Bigr)\frac\cL\ep|\wh j_0|^2+\frac{\cL{\cM}}{\ep}\Bigl(1-\frac\kappa{\sqrt n}\Bigr)|\wh j_1|^2\hfill\cr\hfill=
\frac{\kappa\cL^2{\cM}}{\rho\ep}(1+{\cM})\Re(\wh j_0\,\overline{\wh j_1})
+\sqrt n\,\frac{\cL}\ep\Re\Bigl(\wh b\,\Bigl(\overline{\wh j_0}
-\frac{\kappa\cL{\cM}}{\rho}\,\overline{\wh j_1}\Bigr)\Bigr)\cdotp}
$$
For $\rho\geq c\cL{\cM},$ it is clear that our choosing $\kappa$ small enough implies that
\begin{itemize}
\item  the first term of the r.h.s. may be absorbed by the 
second and third ones of the l.h.s.,\smallbreak
\item we have $|\wh j_0|^2+|\wh j_1|^2-\frac{2\kappa\cL{\cM}}{\rho}\Re(\wh j_0\,\overline{\wh j_1})\approx |\wh j_0|^2+|\wh j_1|^2,$\smallbreak
\item we have $\bigl(1+\frac \kappa{\sqrt n}{\cM}\bigr)\frac\cL\ep|\wh j_0|^2+\bigl(1-\frac{\kappa}{\sqrt n}\bigr)\frac{\cL{\cM}}{\ep}|\wh j_1|^2\geq \frac\kappa2\frac{\cL{\cM}}{\sqrt n\,\ep}(|\wh j_0|^2+|\wh j_1|^2).$
\end{itemize}\medbreak\noindent
Therefore, we end up with the following inequality
\begin{equation}\label{eq:hfdiff2}
\ep|(\wh j_0,\wh j_1)(t)|+\cL\cM\int_0^t|(\wh j_0,\wh j_1)|\,d\tau
\lesssim \ep|(\wh j_0,\wh j_1)(0)|+\cL\int_0^t|\wh b|\,d\tau.
\end{equation}
Combining with  \eqref{eq:hfdiff1},  we conclude that if 
\begin{equation}\label{eq:hfdiff}
\rho\geq c\max(1,\cL{\cM})\quad\hbox{and}\quad \rho\geq C\cL,
\end{equation}
 for a large enough constant $C$ then
\begin{eqnarray}\label{eq:hfdiff3}
|(\rho\wh b,\wh d)(t)|+\rho^2\int_0^t\!|\wh d|\,d\tau+\int_0^t\!\rho|\wh b|\,d\tau
\lesssim |(\rho\wh b,\wh d,\ep\wh j_0,\ep\wh j_1)(0)|,\\\label{eq:hfdiff4}
|(\wh j_0,\wh j_1)(t)|+\frac{\cL\cM}\ep\int_0^t\!|(\wh j_0,\wh j_1)|\,d\tau
\lesssim \frac\cL{\rho\ep}|(\rho\wh b,\wh d)(0)|+|(\wh j_0,\wh j_1)(0)|,
\end{eqnarray}
whence
\begin{equation}\label{eq:hfdiff3a}
|(\rho\wh b,\wh d,\wh j_0,\wh j_1)(t)|+\rho^2\int_0^t\!|\wh d|\,d\tau+\int_0^t\!\rho|\wh b|\,d\tau
+\frac{\cL\cM}\ep\int_0^t\!|(\wh j_0,\wh j_1)|\,d\tau
\lesssim |(\rho\wh b,\wh d,\wh j_0,\wh j_1)(0)|.
\end{equation}

The only case where the condition $\rho\geq C\cL$ may be stronger than $\rho\geq c\cL\cM$ is when  $\cM$ is bounded. {}From our study for small $\rho$'s, we must  assume that $\cL\cM\gtrsim\ep^{-1},$
and thus  $c\cL\cM$ is still much larger than $\sqrt{2/n}.$ 
Therefore, one may take advantage of \eqref{eq:diff-hf2}  
 to bound the r.h.s. of \eqref{eq:hfdiff2}, and combining with \eqref{eq:hfdiff1}, we get for $\rho\approx\cL$
\begin{multline}\label{eq:diff4}
|(\rho\wh b,\wh d,\sqrt{\ep\cL}\wh j_0,\sqrt{\ep\cL}\wh j_1)(t)|+\rho^2\!\int_0^t\!|\wh d|\,d\tau+\!\int_0^t\!\rho|\wh b|\,d\tau
\\+\cL\int_0^t\!|(\wh j_0,\wh j_1)|\,d\tau
\lesssim |(\rho\wh b,\wh d,\sqrt{\ep\cL}\wh j_0,\sqrt{\ep\cL}\wh j_1)(0)|.
\end{multline}


\section{The non-equilibrium diffusion regime}

This section is devoted to the study of the so-called   \emph{non-equilibrium diffusion asymptotics}.
Assuming that for some $\kappa>1$ and $m>0,$ we have
\begin{equation}\label{eq:diffasymp}
\frac\cL\ep\to\frac{\kappa}{n\nu}\quad\hbox{and}\quad\cL^2\cL_s\to\frac m{\nu^2}, 
\end{equation}
we want to prove the convergence of the solutions of \eqref{eq:NSrad} to those 
of \eqref{eq:noneq1} or \eqref{eq:noneq2} if $m<+\infty$ or $m=+\infty,$ respectively, 
when $\ep$ goes to $0.$
\medbreak
The first subsection concerns the proof  of global existence with `uniform' estimates 
for the radiative Navier-Stokes equations \eqref{eq:NSrad}  in the asymptotic \eqref{eq:diffasymp}, 
in the small data case with critical regularity. 
Next,  still for small critical data,  we establish 
 the global  existence for the limit systems \eqref{eq:noneq1} and \eqref{eq:noneq2}. 
In the last part of the present section, we combine the uniform estimates with compactness arguments 
in order to justify the convergence of \eqref{eq:NSrad} to 
\eqref{eq:noneq1} or \eqref{eq:noneq2}.


\subsection{Global existence and uniform estimates for \eqref{eq:NSrad} with  $\cL\approx\ep$ and $\cL^2\cL_s\gtrsim1$}

In order to get a global-in-time existence statement for \eqref{eq:NSrad} in the non-equilibrium diffusion regime, 
we first put together the estimates that we obtained in the previous section, in the case $\cL\approx\ep$ and $\cL^2\cL_s\gtrsim1$.
Even though localizing the linearized equations by means of Littlewood-Paley operators allows 
to get essentially optimal estimates for the linearized equations of \eqref{eq:NSrad}, it
is not enough for our purpose, owing to the convection term $\vu\cdot\nabla b$ that may cause a loss of one derivative.  
The difficulty may be overcome by paralinearizing  the whole system, as explained below. 
After that, it is easy to prove global in time estimates for the solutions to \eqref{eq:NSrad} 
just by combining the estimates for the paralinearized system, and standard product laws in 
Besov spaces to handle the other nonlinear terms.

\subsubsection{Linear estimates}

Performing the change of variables \eqref{eq:nu} reduces the study to that of  the linear system \eqref{eq:diff}. 

\subsubsection*{Low frequencies estimates} 

 Using \eqref{eq:diffulf2}, the comment that follows, \eqref{eq:diffulf4} and the fact that
$|(\wh b,\wh d,\wh j_0,\wh j_1)|\approx|(\wh\fb,\wh\fd,\wh\fj_0,\wh\fj_1)|,$ we get for the solution $(\wh b,\wh d,\wh j_0,\wh j_1)$
to \eqref{eq:diff0},
\begin{multline}\label{eq:noneqlf}
|(\wh b,\wh d,\wh j_0,\wh j_1)(t)|+\rho^2\int_0^t|(\wh b,\wh d,\wh j_0,\wh j_1)|\,d\tau
+\int_0^t|\wh\fj_0|\,d\tau+\cM\int_0^t|\wh\fj_1|\,d\tau\\
\leq C|(\wh b,\wh d,\wh j_0,\wh j_1)(0)|\quad\hbox{for all }\ 0\leq\rho\leq C_1,
\end{multline}
with\footnote{Note that the last term of $\fj_0$ given by \eqref{eq:chgdiffu2} is negligible
for $\rho\lesssim1$ and may thus be omitted.}
 $\wh\fj_0:=\wh j_0-\sqrt n\,\wh b-\sqrt n\,\frac\ep{\wt\cL}\,\rho\wh d$ and 
$\wh\fj_1:=\wh j_1-\frac\rho{\sqrt n\,\wt\cL\cM}\wh j_0+\frac{\rho\wh b}{\wt\cL\cL_s\cM}\cdotp$

\subsubsection*{Middle frequencies estimates} 

If $\wt\cL^2\cL_s\approx1$ then  using \eqref{eq:diff-hf7},  and the definition of $\wh\zeta_1$ versus that of $\wh\fj_1,$ we get
\begin{multline}\label{eq:noneqmf}
|(\rho\wh b,\wh d,\wh j_0,\wh j_1)(t)|+\rho^2\int_0^t|(\wh d,\wh j_0)|\,d\tau+\rho\int_0^t|\wh b|\,d\tau+\cM\int_0^t|\wh\fj_1|\,d\tau\\
\leq C|(\rho\wh b,\wh d,\wh j_0,\wh j_1)(0)|\quad\hbox{for all }\ C_1\leq\rho\leq c\wt\cL\cM.
\end{multline}
If $\wt\cL^2\cL_s\to+\infty$ then  \eqref{eq:diff-mf6} and \eqref{eq:diff-mf7} ensure that
\begin{multline}\label{eq:noneqmf1}
|(\rho\wh b,\wh d,\rho\wh j_0,\wh j_1)(t)|+\rho^2\int_0^t|\wh d|\,d\tau+\rho\int_0^t|(\wh b,\wh j_0)|\,d\tau+\cM\int_0^t|\wh\fj_1|\,d\tau\\
\leq C|(\rho\wh b,\wh d,\rho\wh j_0,\wh j_1)(0)|\quad\hbox{for all }\ \sqrt{1+n^{-1}}\leq\rho\leq \ep\nu\sqrt\cM,
\end{multline}
and, according to \eqref{eq:diff-hf7}
\begin{multline}\label{eq:noneqmf2}
|(\rho\wh b,\wh d,(1\!+\!\textstyle\frac{\ep^2\nu\cM}\rho)\wh j_0,\wh j_1)(t)|
+\Int_0^t|\rho\wh b|\,d\tau 
+\rho^2\int_0^t|\wh d|\,d\tau\\+(1\!+\!\textstyle\frac{\rho}{\ep^2\nu\cM})
\Int_0^t|\rho\wh j_0|\,d\tau+
 \cM\Int_0^t|\wh\fj_1|\,d\tau
\lesssim  |(\rho\wh b,\wh d,(1\!+\!\textstyle\frac{\ep^2\nu\cM}\rho)\wh j_0,\wh j_1)(0)|,
\end{multline}
 whenever  $C_1\ep\nu\sqrt\cM\leq \rho\leq c\ep\nu\cM.$ 
\medbreak
Hence
\begin{multline}\label{eq:noneqmfbis}
|(\rho\wh b,\wh d, \max(1,\min(\rho,\rho^{-1}\ep^2\nu\cM))\wh j_0,\wh j_1)(t)|+\rho\int_0^t|(\wh b,\min(1,
\textstyle\frac{\rho^2}{\ep^2\nu\cM})\wh j_0)|\,d\tau\\+\rho^2\int_0^t|\wh d|\,d\tau+\cM\int_0^t|\wh\fj_1|\,d\tau
\leq C|(\rho\wh b,\wh d,\max(1,\min(\rho,\rho^{-1}\ep^2\nu\cM))\wh j_0,\wh j_1)(0)|.
\end{multline}

\subsubsection*{High frequencies estimates} 

Using \eqref{eq:hfdiff3a}, we have
\begin{multline}\label{eq:noneqhf}
|(\rho\wh b,\wh d,\wh j_0,\wh j_1)(t)|+\rho^2\int_0^t|\wh d|\,d\tau+\rho\int_0^t|\wh b|\,d\tau+\cM\int_0^t|(\wh j_0,\wh j_1)|\,d\tau\\
\leq C|(\rho\wh b,\wh d,\wh j_0,\wh j_1)(0)|\quad\hbox{for  }\ \rho\geq c\wt\cL\cM.
\end{multline}

For notational simplicity, we shall slightly abusively change $c$ and $C_1$ to $1$ in all the following computations.

\subsubsection*{Optimal estimates in Besov spaces} 

If $\wt\cL^2\cL_s\approx1$ then
 localizing  \eqref{eq:diff} with nonzero source terms $f$ and $\vc g$ according to Littlewood-Paley operator $\ddk,$ 
using \eqref{eq:Pu} and following the computations leading to  \eqref{eq:noneqlf}, \eqref{eq:noneqmf}
and \eqref{eq:noneqhf} (combined with Fourier-Plancherel theorem)
we end up  for all $s\in\R$ with\footnote{Further explanations on the method will be supplied to the reader
in the next paragraph.}
\begin{multline}\label{eq:noneqestlin}
\|(\vu,j_0,\vc j_1)(t)\|_{\dot B^s_{2,1}} +\|b(t)\|_{\dot B^s_{2,1}}^{\ell,1}+\|b(t)\|_{\dot B^{s+1}_{2,1}}^{h,1}
+\int_0^t\|\vu\|_{\dot B^{s+2}_{2,1}}\,d\tau\\
+\int_0^t\|(b,j_0,\vc j_1)\|_{\dot B^{s+2}_{2,1}}^{\ell,1}\,d\tau
+\int_0^t\|b\|^{h,1}_{\dot B^{s+1}_{2,1}}\,d\tau
+\int_0^t\|\fj_0\|_{\dot B^s_{2,1}}^{\ell,1}\,d\tau\\
+\cM\int_0^t\|\vc\fj_1\|_{\dot B^s_{2,1}}^{\ell,\wt\cL\cM}\,d\tau
+\int_0^t\|j_0\|_{\dot B^{s+2}_{2,1}}^{m,1,\wt\cL\cM}\,d\tau
+\cM\int_0^t\|(j_0,\vc j_1)\|_{\dot B^s_{2,1}}^{h,\wt\cL\cM}\,d\tau\\
\lesssim \|(\vu,j_0,\vc j_1)(0)\|_{\dot B^s_{2,1}} +\|b(0)\|_{\dot B^s_{2,1}}^{\ell,1}+\|b(0)\|_{\dot B^{s+1}_{2,1}}^{h,1}
+\int_0^t\bigl(\|f\|_{\dot B^s_{2,1}}^{\ell,1}+\|f\|_{\dot B^{s+1}_{2,1}}^{h,1}+\|\vc g\|_{\dot B^s_{2,1}}\bigr)\,d\tau,
\end{multline}
with 
\begin{equation}\label{eq:J0J1}
\fj_0:=j_0-\sqrt n\,b-\sqrt n\frac\ep{\wt\cL}\div\vu\quad\hbox{and}\quad 
\vc \fj_1:=\vc j_1+\frac1{\sqrt n\,\wt\cL\cM}\nabla j_0-\frac{1}{\wt\cL\cL_s\cM}\nabla b.
\end{equation}
According to our previous work in \cite{DD}, the critical regularity framework corresponds to $s=n/2-1.$
Therefore, the following quantities will play an important role
$$
\|(b,\vu,j_0,\vc j_1)\|_{X}:=
\|b\|_{\dot B^{\frac n2-1}_{2,1}}^{\ell,1}
+\|b\|_{\dot B^{\frac n2}_{2,1}}^{h,1}
+\|(\vu,j_0,\vc j_1)\|_{\dot B^{\frac n2-1}_{2,1}},
$$
and
$$\displaylines{
\|(b,\vu,j_0,\vc j_1)\|_{Y}:=\sup_{t\geq0} \|(b,\vu,j_0,\vc j_1)(t)\|_{X}
+\int_{\R_+}\!\Bigl(\|b\|_{\dot B^{\frac n2+1}_{2,1}}^{\ell,1}+\|b\|_{\dot B^{\frac n2}_{2,1}}^{h,1}
+\|\vu\|_{\dot B^{\frac n2+1}_{2,1}}\Bigr)d\tau\hfill\cr\hfill
+\int_{\R_+}\Bigl(\cM\|\vc\fj_1\|_{\dot B^{\frac n2-1}_{2,1}}^{\ell,\ep\nu\cM}
+\|\fj_0\|_{\dot B^{\frac n2-1}_{2,1}}^{\ell,1}+ \|j_0\|_{\dot B^{\frac n2+1}_{2,1}}^{m,1,\ep\nu\cM}
  +\cM\|(j_0,\vc j_1)\|_{\dot B^{\frac n2-1}_{2,1}}^{h,\ep\nu\cM}\Bigr)d\tau.}
$$
We denote by $X$ and $Y$ the corresponding functional spaces
(where time continuity is imposed rather than just boundedness) and agree that $Y(t)$  stands for
the restriction of $Y$ to  the interval $[0,t].$ 
\medbreak
In the case $\ep^2\cL_s\to+\infty,$ we have to change slightly the definition of the norms $\|\cdot\|_X$ and $\|\cdot\|_Y$ 
as the middle frequencies obey \eqref{eq:noneqmfbis}. 
Consequently, we change $\|\cdot\|_X$ to $\|\cdot\|_{X_\infty}$ with 
$$\displaylines{
\|(b,\vu,j_0,\vc j_1)\|_{X_\infty}:=
\|b\|_{\dot B^{\frac n2-1}_{2,1}}^{\ell,1}
+\|b\|_{\dot B^{\frac n2}_{2,1}}^{h,1}
+\|(\vu,\vc j_1)\|_{\dot B^{\frac n2-1}_{2,1}}\hfill\cr\hfill+\|j_0\|_{\dot B^{\frac n2-1}_{2,1}}^{\ell,1}+\|j_0\|_{\dot B^{\frac n2-1}_{2,1}}^{h,\ep\nu\cM},
+\sum_{1\leq2^k\leq\ep\cM}2^{k\frac n2}\max(1,2^k,\min(2^{-k}\ep^2\nu\cM))\|\ddk j_0\|_{L^2}}
$$
and $\|\cdot\|_Y$ to 
$$\displaylines{
\|(b,\vu,j_0,\vc j_1)\|_{Y_\infty}:=\sup_{t\geq0} \|(b,\vu,j_0,\vc j_1)(t)\|_{X_\infty}
+\int_{\R_+}\!\Bigl(\|b\|_{\dot B^{\frac n2+1}_{2,1}}^{\ell,1}+\|b\|_{\dot B^{\frac n2}_{2,1}}^{h,1}
+\|\vu\|_{\dot B^{\frac n2+1}_{2,1}}\Bigr)d\tau\hfill\cr\hfill
+\int_{\R_+}\Bigl(\cM\|\vc\fj_1\|_{\dot B^{\frac n2-1}_{2,1}}^{\ell,\ep\nu\cM}
+\|\fj_0\|_{\dot B^{\frac n2-1}_{2,1}}^{\ell,1}+\cM\|(j_0,\vc j_1)\|_{\dot B^{\frac n2-1}_{2,1}}^{h,\ep\nu\cM}\Bigr)d\tau\hfill\cr\hfill
+\int_{\R_+}\sum_{1\leq2^k\leq\ep\cM} 2^{k\frac n2}\min(1,2^{2k}\ep^{-2}\nu^{-1}\cM^{-1})\|\ddk j_0\|_{L^2}d\tau.}
$$
 To prove global estimates for the nonlinear system \eqref{eq:NSrad}, the natural  next step would be to take advantage of  \eqref{eq:noneqestlin} with $s=n/2-1$
and all the nonlinear terms in the r.h.s.    Unfortunately, this does not work  for the convection term $\vu\cdot\nabla b$ 
causes a loss of one derivative (indeed, if $b$ is in $\dot B^{\frac n2}_{2,1}$ then $\vu\cdot\nabla b$ cannot be smoother than $\dot B^{\frac n2-1}_{2,1}$).
A nowadays standard way to overcome the difficulty is to paralinearize \eqref{eq:NSrad}, that
is to add to \eqref{eq:diff} the principal parts of the convection terms.
This is the aim of the next paragraph.


\subsubsection{The paralinearized system}\label{ss:para}

Before introducing the paralinearized system  associated to \eqref{eq:NSrad}, let
us shortly  recall the definition of the paraproduct, according to the pioneering paper \cite{Bony} by J.-M. Bony. 
The (homogeneous)  \emph{paraproduct} between two tempered distributions $U$ and $V$ satisfying \eqref{eq:LFconv} is given by
$$T_UV:=\sum_k \dot S_{k-1} U\ddk V\quad\hbox{with}\quad
\dot S_{k-1}:=\chi(2^{k-1}D).$$
We also introduce
$$
T'_VU:=\sum_k   \dot S_{k+2}V\,\ddk U,
$$
and,  observe that, at least formally 
$$UV=T_UV+T'_VU.$$
To some extent, if $U$ is smooth enough then $T_UV$ may be seen as the \emph{principal part} 
of the product $UV. $ This motivates our  considering the following system 
\begin{equation}\label{eq:par-diff}
\left\{\begin{array}{l}
 \partial_t b+ T_{\vv}\cdot\nabla b+ \div \vu=F,\\[1ex]
  \partial_t \vu+ T_{\vv}\cdot\nabla\vu -\wt\cA\vu + \nabla b
    -\frac{\wt\cL\cM}n \vc j_1=\vec G,\\[1ex]
 \partial_t j_0+ \frac1{\ep\sqrt n}\,\div\vec j_1
+\frac{\wt\cL}\ep(j_0-\sqrt n\,b)=0,\\[1ex]
 \partial_t \vec j_1 + \frac1{\ep\sqrt n}\,{ \nabla j_0}+\frac{\wt\cL\cM}\ep\vec j_1=\vec 0,
 \end{array}\right.
\end{equation}
where   $\cA=\mu\Delta+(\lambda+\mu)\nabla\div,$ $\cM:=1+\cL_s,$ $\vv$ and $\vc G$ are given time dependent vector-fields, 
and $F$ is a given real valued function.

\begin{Proposition}\label{p:para-diff} For any smooth solution $(b,\vu,j_0,\vc j_1)$ we have the following a priori estimate if $0<m<+\infty$
$$\displaylines{
\|(b,\vu,j_0,\vc j_1)\|_{Y(t)}\leq C\biggl(\|(b,\vu,j_0,\vc j_1)(0)\|_{X}
+\int_0^t\|\nabla\vv\|_{L^\infty}\|(b,\vu,j_0,\vc j_1)\|_{X}\,d\tau\hfill\cr\hfill
+\int_0^t\|(\nabla F,\vc G)\|_{\dot B^{\frac n2-1}_{2,1}}^{h,1}\,d\tau
+\int_0^t\|(F-T_{\vv}\cdot\nabla b,\vc G-T_{\vv}\cdot\nabla\vu)\|_{\dot B^{\frac n2-1}_{2,1}}^{\ell,1}\,d\tau\biggr)\cdotp}
$$
A similar inequality holds if $m=+\infty,$ with  $X_\infty(t)$ and $Y_\infty(t)$ 
instead of  $X(t)$ and $Y(t).$ 
\end{Proposition}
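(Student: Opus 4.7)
The plan is to paralinearize via Bony's decomposition and then treat each frequency regime with the linear machinery of Section~3. First, I would apply the dyadic block $\ddk$ to each equation of \eqref{eq:par-diff}, setting $(b_k,\vu_k,j_{0,k},\vc j_{1,k}):=(\ddk b,\ddk\vu,\ddk j_0,\ddk\vc j_1)$. Writing
$$
\ddk(T_{\vv}\cdot\nabla b)=T_{\vv}\cdot\nabla b_k+R_k^1,\qquad
\ddk(T_{\vv}\cdot\nabla\vu)=T_{\vv}\cdot\nabla\vu_k+R_k^2,
$$
one sees that $(b_k,\vu_k,j_{0,k},\vc j_{1,k})$ solves exactly the linearized system \eqref{eq:diff} dressed with a paratransport term, with source data $(\ddk F-R_k^1,\ddk\vc G-R_k^2)$. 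The standard Bony-type commutator bounds (see \cite{BCD}, Chapter~2)
$$
\sum_k 2^{k(n/2-1)}\bigl(\|R_k^1\|_{L^2}+\|R_k^2\|_{L^2}\bigr)\lesssim\|\nabla\vv\|_{L^\infty}\bigl(\|b\|_{\dot B^{n/2}_{2,1}}+\|\vu\|_{\dot B^{n/2-1}_{2,1}}\bigr)
$$
will eventually produce the $\int_0^t\|\nabla\vv\|_{L^\infty}\|(b,\vu,j_0,\vc j_1)\|_{X}\,d\tau$ contribution in the target estimate.

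For low frequencies ($2^k\lesssim 1$) and middle frequencies ($1\lesssim 2^k\lesssim \tilde\cL\cM$), $b_k$ and $\vu_k$ live at comparable regularity, so the paraconvection terms can be freely transferred to the r.h.s.\ without losing any derivative. I would then apply the pointwise-in-$\rho$ estimates \eqref{eq:noneqlf} at low frequencies and \eqref{eq:noneqmf} (if $\tilde\cL^2\cL_s\approx1$) or \eqref{eq:noneqmfbis} (if $\tilde\cL^2\cL_s\to+\infty$) at middle frequencies blockwise, multiply by the $2^{ks}$ weights dictated by the definition of $Y$ or $Y_\infty$, and sum over $k$. This accounts for the appearance of $F-T_{\vv}\cdot\nabla b$ and $\vc G-T_{\vv}\cdot\nabla\vu$ on the r.h.s.~of the proposition at low frequency; in the middle range, the source terms coming from the paraconvection are absorbed via the $\|\nabla\vv\|_{L^\infty}\|(b,\vu,j_0,\vc j_1)\|_X$ contribution.

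At high frequencies ($2^k\gtrsim\tilde\cL\cM$), the analogous maneuver would cost one derivative on $b$, so I would instead redo the energy argument leading to \eqref{eq:hfdiff3a} directly on $(b_k,\vu_k,j_{0,k},\vc j_{1,k})$ while keeping the transport on the l.h.s. The additional inner products $\Re(T_{\vv}\cdot\nabla b_k\,|\,b_k)_{L^2}$ and $\Re(T_{\vv}\cdot\nabla\vu_k\,|\,\vu_k)_{L^2}$ reduce after integration by parts to quantities controlled by $\|\nabla\vv\|_{L^\infty}(\|b_k\|_{L^2}^2+\|\vu_k\|_{L^2}^2)$. Since no transport acts on the radiative unknowns, the Section~3 analysis of $(j_{0,k},\vc j_{1,k})$ carries over verbatim. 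The source data contribute the $\|(\nabla F,\vc G)\|_{\dot B^{n/2-1}_{2,1}}^{h,1}$ term, the gradient on $F$ reflecting the extra derivative carried by $b$ at high frequency in the definition of $X$.

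The main obstacle I anticipate is the medium frequency analysis: the sharp bounds of Section~3 there rely on finely tuned changes of unknowns ($\fb,\fd,\fj_0,\fj_1$ or $\zeta_0,\zeta_1$) that are only approximately diagonal in Fourier, so one must check that the paraconvection term acts on the new unknowns up to controllable errors, i.e.\ that these changes of unknowns commute with $\ddk$ and with $T_{\vv}\cdot\nabla$ modulo terms absorbable by the symmetric structure. Once this is verified, summing the three regimes with the weights dictated by the definitions of $X$ and $Y$ (resp.\ $X_\infty$ and $Y_\infty$) and closing by Grönwall's inequality yields the announced estimate.
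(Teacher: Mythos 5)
Your global strategy (localize by $\ddk$, split the paraconvection into a main part acting on $\ddk u$ plus a commutator, run the Section~3 energy estimates blockwise, then weight by $2^{k(\frac n2-1)}$ and sum) is the same as the paper's, and your description of the low-frequency and high-frequency steps is essentially correct. The genuine gap is at middle frequencies. You assert that there ``$b_k$ and $\vu_k$ live at comparable regularity, so the paraconvection terms can be freely transferred to the r.h.s.\ without losing any derivative.'' That is false: in the norms $X$ and $Y$, $b$ carries one extra derivative compared to $\vu$ whenever $2^k\geq1$ (one controls $\|b\|_{\dot B^{\frac n2}_{2,1}}^{h,1}$ and $\int\|b\|_{\dot B^{\frac n2}_{2,1}}^{h,1}\,d\tau$ vs.\ $\|\vu\|_{\dot B^{\frac n2-1}_{2,1}}$ and $\int\|\vu\|_{\dot B^{\frac n2+1}_{2,1}}\,d\tau$), which is precisely why the right-hand side of the proposition carries $\nabla F$ rather than $F$ at high frequency. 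Moving $\ddk(T_\vv\cdot\nabla b)$ into $F$ at middle frequency would then require $\nabla b\in L^1(\dot B^{\frac n2}_{2,1})$, one derivative more than the $Y$-norm provides; a paraproduct bound alone yields a $\|\vv\|_{L^\infty}\|b\|_{\dot B^{\frac n2+1}_{2,1}}$ factor, not the $\|\nabla\vv\|_{L^\infty}\|b\|_{\dot B^{\frac n2}_{2,1}}$ factor the statement needs. The remedy is exactly the one you apply at high frequency and must also apply at middle frequency: keep $T_\vv\cdot\nabla$ on the left, compute the time derivative of the Lyapunov functional $2\|(\ddk b,\ddk\vu)\|_{L^2}^2+\|\ddk\nabla b\|_{L^2}^2+2(\ddk\nabla b\,|\,\ddk\vu)_{L^2}$, and use the commutator lemma (Lemma~4.1 of \cite{DD}) to produce the $\|\nabla\vv\|_{L^\infty}\sum_{k'\sim k}\|\dot\Delta_{k'}(\nabla b,\vu)\|_{L^2}$ factor.

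Concerning your final worry, whether the changes of unknowns ($\zeta_0,\zeta_1,\fb,\fd,\fj_0,\fj_1$) commute with $\ddk$ and $T_\vv\cdot\nabla$: you are right to flag it, but the paper resolves it with a structural remark you do not make and which is worth singling out. For $\rho\geq C_1$ (middle and high frequency) the only linear combinations used mix fluid unknowns with fluid unknowns and radiative unknowns with radiative unknowns; e.g.\ at middle frequency one sets $\vc\zeta_1=\vc j_1+\frac1{\sqrt n\,\wt\cL\cM}\nabla j_0$ and keeps $(b,\vu)$ as such. Since the paraconvection acts only on the fluid equations, the radiative energy estimate is verbatim that of Section~3, and no commutation issue arises. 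The mixed changes of unknowns (like $\fj_0=j_0-\sqrt n\,b-\ldots$) are used only in the low-frequency regime $2^k\leq C_1$, where the paraconvection can indeed be transferred wholesale to the source because there $b$ and $\vu$ do share the same regularity.
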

\bProof
Localizing System \eqref{eq:par-diff} by means of  $\ddk$ yields
\begin{equation}\label{eq:par-diff-k}
\left\{\begin{array}{l}
 \partial_t \ddk b+ \ddk(T_{\vv}\cdot\nabla b)+ \div \ddk\vu=\ddk F,\\[1ex]
  \partial_t \ddk \vu+ \ddk(T_{\vv}\cdot\nabla\vu) -\wt\cA\ddk\vu + \nabla\ddk b
    -\frac{\wt\cL\cM}{n}\ddk\vc j_1=\ddk\vec G,\\[1ex]
 \partial_t \ddk j_0+  \frac1{\ep\sqrt n}\,{\mbox{div}\,\ddk\vec j_1}
+\frac{\wt\cL}\ep(\ddk j_0-\sqrt n\ddk b)=0,\\[1ex]
 \partial_t \ddk\vec j_1 +  \frac1{\ep\sqrt n}\,\nabla \ddk j_0+\frac{\wt\cL\cM}\ep\ddk \vec j_1=\vec 0.
 \end{array}\right.
\end{equation}
The important point is that in order to obtain all the estimates corresponding to $\rho\geq C_1,$
one only has to resort to combinations between fluid unknowns on one side, and
radiative unknowns, on the other side. 
This will enable us to use exactly the same energy method for \eqref{eq:par-diff} as for \eqref{eq:diff}, 
in the middle and high frequency regimes,  without introducing unwanted 
parts of convection terms in the inequalities.

\noindent\emph{1. Low frequencies: $2^k\leq C_1.$}

Including the para-convection terms   in the  source terms of  \eqref{eq:par-diff-k}
and repeating  the computations leading to \eqref{eq:noneqlf}, we get after  taking $L^2$ norms
and using Fourier-Plancherel theorem 
\begin{multline}\label{eq:lfnoneq1}
\|\ddk(b,\vu,j_0,\vc j_1)(t)\|_{L^2}+2^{2k}\int_0^t
\|\ddk(b,\vu)\|_{L^2}\,d\tau
+\int_0^t\|\ddk\fj_0\|_{L^2}\,d\tau\\+\nu\cL_s\int_0^t\|\ddk\vc\fj_1\|_{L^2}\,d\tau
\lesssim \|\ddk(b,\vu,j_0,\vc j_1)(0)\|_{L^2}\\
+\int_0^t\|\ddk(F-T_{\vu}\cdot\nabla b)\|_{L^2}\,d\tau
+\int_0^t\|\ddk(\vc G-T_{\vu}\cdot\nabla\vu)\|_{L^2}\,d\tau.
\end{multline}

\noindent\emph{2. Medium frequencies: $C_1\leq 2^k\leq c\wt\cL\cL_s.$}

Keeping in mind the proof of \eqref{eq:noneqmf},  we see that it is suitable to  introduce
$$
\vc\zeta_1:=\vc j_1+\frac1{\sqrt n\,\wt\cL\cM}\nabla j_0.
$$
Now, because we have
$$\left\{\begin{array}{l}
\d_t\ddk b+\div\ddk\vu=\ddk F,\\[1ex]
\d_t\ddk\vu-\wt\cA\ddk\vu+\nabla\ddk b=\frac1{n^{3/2}}\nabla\ddk j_0+\frac{\wt\cL\cM}n\ddk\vc\zeta_1+\ddk \vc G,
\end{array}\right.
$$
we easily get by computing 
\begin{equation}\label{eq:lya1}
\frac12\frac d{dt}\Bigl(2\|(\ddk b,\ddk\vu)\|_{L^2}^2+\|\ddk\nabla b\|_{L^2}^2+2(\ddk\nabla b|\ddk\vu)_{L^2}\Bigr),
\end{equation}
and by using  Lemma 4.1 in \cite{DD} to handle the para-convection terms,  
the following inequality for all  $2^{k}\geq C_1$ 
$$\displaylines{
\|(\ddk\nabla b,\ddk\vu)(t)\|_{L^2}+2^{2k}\int_0^t\|\ddk\vu\|_{L^2}\,d\tau
+\int_0^t\|\ddk\nabla b\|_{L^2}\,d\tau\lesssim \|(\ddk\nabla b,\ddk\vu)(0)\|_{L^2}\hfill\cr\hfill
+\int_0^t\|(\ddk\nabla F,\ddk\vc G)\|_{L^2}\,d\tau
+\int_0^t\|\nabla\ddk j_0\|_{L^2}\,d\tau+\wt\cL\cM\int_0^t\|\nabla\ddk\vc j_1\|_{L^2}\,d\tau\hfill\cr\hfill
+\sum_{k'\sim k}\int_0^t\|\nabla\vv\|_{L^\infty}\|(\dot\Delta_{k'}\nabla b,\dot\Delta_{k'}\vu)\|_{L^2}\,d\tau.}
$$
Then looking at the equations satisfied by $\ddk j_0$ and $\ddk\vc\zeta_1$ 
(in the spirit of \eqref{eq:diff-mf}), 
we derive inequalities similar to \eqref{eq:diff-mf2b} and \eqref{eq:diff-mf3b} 
for $\|\ddk j_0\|_{L^2}$ and $\|\ddk\vc\zeta_1\|_{L^2},$  and thus following 
the computations leading to \eqref{eq:diff-hf7}, we end up in the case $m<+\infty$ with 
\begin{multline}\label{eq:mfnoneq1}\|\ddk(\nabla b,\vu,\nabla j_0,\vc j_1)(t)\|_{L^2}+2^{2k}\int_0^t\|(\ddk\vu,\ddk j_0)\|_{L^2}\,d\tau
+\int_0^t\|\ddk \nabla b\|_{L^2}\,d\tau\\
+\nu\cL_s\int_0^t\|\ddk\vc\zeta_1\|_{L^2}\,d\tau
\lesssim \|\ddk(\nabla b,\vu,\nabla j_0,\vc j_1)(0)\|_{L^2}
+\int_0^t\|\ddk(\nabla F,\vc G)\|_{L^2}\,d\tau\\
+\sum_{k'\sim k}\int_0^t\|\nabla\vv\|_{L^\infty}\|\dot\Delta_{k'}(\nabla b,\vu)\|_{L^2}\,d\tau.
\end{multline}
Comparing the definition of $\vc\zeta_1$ and $\vc\fj_1,$ we
see that one may replace $\vc\zeta_1$ with $\vc\fj_1$ above, 
if $C_1\leq2^k\leq c\wt\cL\cL_s.$

The obvious modifications  to be done if $m=+\infty$ are left to the reader.

\noindent\emph{3. High frequencies: $2^k\geq c\wt\cL\cL_s.$}

Again, we compute \eqref{eq:lya1} to bound  the fluid unknowns. In addition, to handle radiative
unknowns, we compute   for some small enough $\kappa$ (see the proof
of \eqref{eq:hfdiff2}) the following quantity 
$$
\frac12\frac d{dt}\bigl(\|\ddk j_0\|_{L^2}^2+\|\ddk\vc j_1\|_{L^2}^2-\kappa\wt\cL\cM\:2^{-2k}(\ddk j_0|\ddk\div\vc j_1)_{L^2}\bigr).
$$
Combining the computations leading to \eqref{eq:noneqhf} with Fourier-Plancherel theorem 
and Lemma 4.1 in \cite{DD} eventually yields
$$
\displaylines{\|\ddk(\nabla b,\vu,j_0,\vc j_1)(t)\|_{L^2}+2^{2k}\int_0^t\|\ddk\vu\|_{L^2}\,d\tau
+2^k\int_0^t\|\ddk b\|_{L^2}\,d\tau+\nu\cL_s\int_0^t\|(\ddk j_0,\ddk\vc j_1)\|_{L^2}\,d\tau\hfill\cr\hfill
\lesssim \|\ddk(\nabla b,\vu,j_0,\vc j_1)(0)\|_{L^2}
+\int_0^t\|\ddk(\nabla F,\vc G)\|_{L^2}\,d\tau
+\sum_{k'\sim k}\int_0^t\|\nabla\vv\|_{L^\infty}\|\dot\Delta_{k'}(\nabla b,\vu)\|_{L^2}\,d\tau.}
$$
Finally, multiplying \eqref{eq:lfnoneq1}, \eqref{eq:mfnoneq1} and the above 
inequality by  $2^{k(\frac n2-1)}$ and summing up over $k$ completes the proof of the proposition.
\qed


\subsubsection{A global existence result}

According to the computations of the previous paragraph and to the change of variables  \eqref{eq:nu},
it is suitable to introduce the following norms for getting  global solutions with uniform estimates in the 
case\footnote{Writing out the corresponding definition if $m=+\infty$ is left to the reader.} $m<+\infty$ 
$$
\|(b,\vu,j_0,\vc j_1)\|_{X^\nu_\ep}:=
\|b\|_{\dot B^{\frac n2-1}_{2,1}}^{\ell,\nu^{-1}}
+\nu\|b\|_{\dot B^{\frac n2}_{2,1}}^{h,\nu^{-1}}
+\|(\vu,j_0,\vc j_1)\|_{\dot B^{\frac n2-1}_{2,1}},
$$
and
$$\displaylines{
\|(b,\vu,j_0,\vc j_1)\|_{Y^\nu_\ep}:=\sup_{t\geq0} \|(b,\vu,j_0,\vc j_1)(t)\|_{X^\nu}
+\int_{\R_+}\!\Bigl(\nu\|b\|_{\dot B^{\frac n2+1}_{2,1}}^{\ell,\nu^{-1}}+\|b\|_{\dot B^{\frac n2}_{2,1}}^{h,\nu^{-1}}
+\nu\|\vu\|_{\dot B^{\frac n2+1}_{2,1}}\Bigr)d\tau\hfill\cr\hfill
+\int_{\R_+}\Bigl(\nu^{-1}\cM\|\vc\fj_1\|_{\dot B^{\frac n2-1}_{2,1}}^{\ell,\ep\cM}
+\nu^{-1}\|\fj_0\|_{\dot B^{\frac n2-1}_{2,1}}^{\ell,\nu^{-1}}+ \nu\|j_0\|_{\dot B^{\frac n2+1}_{2,1}}^{m,\nu^{-1},\ep\cM}
  +\nu^{-1}\cM\|(j_0,\vc j_1)\|_{\dot B^{\frac n2-1}_{2,1}}^{h,\ep\cM}\Bigr)d\tau,}
$$
with $\fj_0:=j_0-b-\frac\ep{\cL}\div\vu$ and $\vc\fj_1:=j_1+\frac1{\cL\cM}\nabla j_0-\frac1{\cL\cL_s\cM}\nabla b.$
\medbreak
Of course, if  $(b',\vu',j_0',\vc j'_1)$ and $(b,\vu,j_0,\vc j_1)$ are interrelated through \eqref{eq:nu}
and $\nu\cL$ is used for  $(b',\vu',j_0',\vc j'_1)$ instead of $\cL,$ then we have 
$$
\|(b',\vu',j_0',\vc j'_1)\|_{X^1_\eps}=\nu^{-1}\|(b,\vu,j_0,\vc j_1)\|_{X^\nu_\eps}
\quad\hbox{and}\quad
\|(b',\vu',j'_0,\vc j'_1)\|_{Y^1_\eps}=\nu^{-1}\|(b,\vu,j_0,\vc j_1)\|_{Y^\nu_\eps}.
$$
 \begin{Theorem}\label{th:diff1}
Assume that $\cL\approx1,$ that $\liminf  \ep^{-1}n\nu\cL>1$ and that $\cL^2\cL_s\approx1.$ 
There exists a positive constant $\eta$ depending only on $\mu/\nu,$ $n$ 
and on the pressure law such that if $\eps$ is small enough and 
the data $(b_0^\eps,\vu_0^\eps,j_{0,0}^\eps,\vc j_{1,0}^\eps)$ satisfy  
\begin{equation}\label{eq:small-diff}
\|(b_0^\eps,\vu_0^\eps,j_{0,0}^\eps,\vc j_{1,0}^\eps)\|_{X_\eps^\nu}\leq \eta\nu,
\end{equation}
then System  \eqref{eq:NSrad} admits a unique global 
solution $(b^\eps,\vu^\eps,j_0^\eps,\vc j_1^\eps)$ in $Y_\eps^\nu.$
In addition, we have
\begin{equation}\label{eq:unif-diff}
\|(b^\eps,\vu^\eps,j_0^\eps,\vc j_1^\eps)\|_{Y_\eps^\nu}\leq C
\|(b_0^\eps,\vu_0^\eps,j_{0,0}^\eps,\vc j_{1,0}^\eps)\|_{X_\eps^\nu}.
\end{equation}
A similar result holds true if $\cL^2\cL_s\to+\infty.$
\end{Theorem}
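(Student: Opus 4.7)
\smallbreak\noindent\textbf{Proof strategy for Theorem \ref{th:diff1}.}
The plan is to combine the paralinearized a priori estimate of Proposition \ref{p:para-diff} with standard product and composition laws in homogeneous Besov spaces, and then close the estimate by a bootstrap argument exploiting the smallness of the data. First I would perform the change of variables \eqref{eq:nu} so as to reduce to the case $\nu=1$: the $Y^\nu_\ep$ norm of the original unknowns is exactly $\nu^{-1}$ times the $Y$ (or $Y_\infty$) norm introduced before Proposition \ref{p:para-diff} for the rescaled unknowns, so smallness in $X^\nu_\ep$ translates directly into smallness in~$X$. A global solution to \eqref{eq:NSradbis} may be built by a standard Friedrichs scheme (truncating initial data and vector fields in frequency and using the spectral localization properties of \eqref{eq:NSradbis}); uniqueness in $Y$ follows by a classical energy argument on the difference of two solutions, one regularity level below.

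Next I would rewrite \eqref{eq:NSradbis} in paralinearized form, i.e.\ in the form \eqref{eq:par-diff} with $\vv:=\vu$ and
\begin{align*}
F&=-T'_{\nabla b}\vu-R(\vu,\nabla b)-k_1(b)\,\div\vu,\\
\vec G&=-T'_{\nabla\vu}\vu-R(\vu,\nabla\vu)+k_2(b)\,\wt\cA\vu-k_3(b)\,\nabla b+\tfrac{\wt\cL\cM}{n}(1+k_4(b))\,\vc j_1,
\end{align*}
where $R$ denotes Bony's remainder. Proposition \ref{p:para-diff} then yields, writing $Z_t:=\|(b,\vu,j_0,\vc j_1)\|_{Y(t)}$ and $Z_0:=\|(b_0,\vu_0,j_{0,0},\vc j_{1,0})\|_X$,
\[
Z_t\leq C\Bigl(Z_0+\int_0^t\|\nabla\vu\|_{L^\infty}Z_\tau\,d\tau
+\int_0^t\bigl(\|\nabla F,\vec G\|^{h,1}_{\dot B^{n/2-1}_{2,1}}+\|F-T_\vu\cdot\nabla b,\vec G-T_\vu\cdot\nabla\vu\|^{\ell,1}_{\dot B^{n/2-1}_{2,1}}\bigr)d\tau\Bigr).
\]
Each of the nonlinear source terms is then estimated by the product and composition laws in $\dot B^s_{2,1}$: the paraproduct remainders $T'_{\nabla b}\vu$, $R(\vu,\nabla b)$, $T'_{\nabla\vu}\vu$, $R(\vu,\nabla\vu)$ are controlled by $Z_t^2$ thanks to the standard gain of regularity for low frequencies and the $L^1_t\dot B^{n/2+1}_{2,1}$ smoothing on $\vu$ and $b^{\ell,1}$; the terms $k_i(b)\,(\wt\cA\vu,\nabla b,\vc j_1)$ are handled by composition estimates for smooth functions of $b\in \dot B^{n/2}_{2,1}\cap L^\infty$ combined with the fact that $Z_t$ controls $\|b\|_{L^\infty_t\dot B^{n/2}_{2,1}}$.

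The genuine difficulty lies in absorbing the coupling term $\tfrac{\wt\cL\cM}{n}(1+k_4(b))\,\vc j_1$ in $\vec G$, whose prefactor $\wt\cL\cM$ is \emph{not} small in the regimes considered. This is precisely the point where the $Y$-norm has been engineered: using the formula \eqref{eq:J0J1} one rewrites $\vc j_1=\vc\fj_1-\frac1{\sqrt n\,\wt\cL\cM}\nabla j_0+\frac1{\wt\cL\cL_s\cM}\nabla b$, so that
\[
\wt\cL\cM\int_0^t\|\vc j_1\|_{\dot B^{n/2-1}_{2,1}}^{\ell,1}\,d\tau\lesssim \cM\int_0^t\|\vc\fj_1\|^{\ell,\ep\cM}_{\dot B^{n/2-1}_{2,1}}\,d\tau+\int_0^t\|\nabla j_0\|_{\dot B^{n/2-1}_{2,1}}^{\ell,1}\,d\tau+\tfrac1{\cL_s}\int_0^t\|\nabla b\|^{\ell,1}_{\dot B^{n/2-1}_{2,1}}\,d\tau,
\]
each of which is controlled by $Z_t$ thanks to the definition of $Y$ (the second term is the middle-frequency damping of $j_0$, while the third uses $\cL_s\gtrsim 1$). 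In high frequencies, $\wt\cL\cM 2^{k(n/2-1)}\|\ddk\vc j_1\|_{L^2}$ is directly integrable through the $\cM\|(j_0,\vc j_1)\|^{h,\ep\cM}$ piece of~$Y$. A similar (longer) bookkeeping covers the case $m=+\infty$ using the $Y_\infty$-norm.

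Putting everything together, the a priori estimate takes the form $Z_t\leq C(Z_0+Z_t^2)$, so if $Z_0\leq\eta$ with $\eta$ small enough (depending only on $C$, $n$, $\mu/\nu$ and the composition functions $k_i$), a standard continuation argument ensures that the smooth approximations built from the Friedrichs scheme satisfy $Z_t\leq 2CZ_0$ for all $t\geq 0$, and one passes to the limit in the usual way to obtain the global solution of Theorem \ref{th:diff1} together with \eqref{eq:unif-diff}. The hardest step is the second, namely verifying that all the frequency-dependent prefactors hidden in $X^\nu_\ep$ and $Y^\nu_\ep$ are consistent with the bounds available for the source terms—especially around the transition zone $2^k\sim\ep\cM$, where the low-, middle- and high-frequency estimates of Section \ref{s:linear} glue together.
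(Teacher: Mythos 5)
Your proposal follows exactly the paper's route: reduce to $\nu=1$ via \eqref{eq:nu}, invoke Proposition \ref{p:para-diff} with $\vv=\vu$, bound the nonlinear source terms by paraproduct and composition laws, handle the problematic coupling $\wt\cL\cM\,k_4(b)\vc j_1$ by rewriting $\vc j_1$ in terms of $\vc\fj_1,\,\nabla j_0,\,\nabla b$ via \eqref{eq:J0J1}, and close by the quadratic bootstrap $Z_t\le C(Z_0+Z_t^2)$. This is the argument given in the paper.

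One concrete slip: in your definition of $\vec G$ you wrote $\tfrac{\wt\cL\cM}{n}(1+k_4(b))\,\vc j_1$, whereas it must be $\tfrac{\wt\cL\cM}{n}\,k_4(b)\,\vc j_1$. The linear part $\tfrac{\wt\cL\cM}{n}\vc j_1$ is already built into the paralinearized system \eqref{eq:par-diff}, which is how Proposition \ref{p:para-diff} takes it into account; repeating it in $\vec G$ makes it a source term of order $Z_t$ rather than $Z_t^2$, so the closing inequality would read $Z_t\le C(Z_0+Z_t+Z_t^2)$ and the bootstrap would not close. Once corrected, the extra factor $k_4(b)$ supplies the needed additional power of $Z_t$ when you multiply the decomposition of $\wt\cL\cM\vc j_1$ by $\|b\|_{L^\infty_t(\dot B^{n/2}_{2,1})}$, exactly as in the paper. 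A second, cosmetic point: you only display the $\ell,1$ part of the estimate for $\wt\cL\cM\|\vc j_1\|$; to be complete the cut should be made at the scale $\wt\cL\cM\approx\ep\nu\cM$ (not at $1$), with the middle-frequency block $1\lesssim 2^k\lesssim\ep\nu\cM$ controlled by $\cM\|\vc\fj_1\|^{\ell,\ep\nu\cM}$ and $\|j_0\|^{m,1,\ep\nu\cM}_{\dot B^{n/2+1}_{2,1}}$ from $Y$, and the high-frequency block by $\cM\|(j_0,\vc j_1)\|^{h,\ep\nu\cM}$ together with $\wt\cL\lesssim1$. You mention this bookkeeping but do not carry it out; the paper does.

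Subject to that correction, the proposal is the paper's proof.
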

\bProof 
Performing the change of variables proposed in \eqref{eq:nu}
reduces the proof to the case $\nu=1$ (changing $\cL$ into $\wt\cL:=\nu\cL$). 
Hence we consider a smooth enough solution to \eqref{eq:NSradbis}, and
 show that one may close the estimates globally\footnote{Existence follows from spectral truncation as in e.g. \cite{BCD}, Chap. 10, 
 and is thus omitted. As for uniqueness, we refer to \cite{DD}.} under Assumption \eqref{eq:small-diff}.
\medbreak
Let us  set $U_0:=\|(b_0^\eps,\vu_0^\eps,j_{0,0}^\eps,\vc j_{1,0}^\eps)\|_{X_\eps^1}$
and $U(t):=\|(b^\eps,\vu^\eps,j_0^\eps,\vc j_1^\eps)\|_{Y_\eps^1(t)}.$
In what follows, we drop exponents $\eps$ 
for notational simplicity. Finally, to shorten the presentation, we just treat the case where $\cL^2\cL_s\approx1.$
\medbreak
Now applying Proposition \ref{p:para-diff} with $\vv=\vu,$ $F:=-T'_{\nabla b}\cdot\vu-k_1(b)\div\vu$ and
$$\vc G:=-T'_{\nabla\vu}\cdot\vu+k_2(b)\wt\cA\vu-k_3(b)\nabla b
+\frac{\wt\cL\cM}n\,k_4(b)\vc j_1,$$
yields for all $t\geq0$
\begin{multline}\label{eq:U-diff}
U(t)\leq C\biggl(U_0+\int_0^t\|\nabla\vu\|_{L^\infty}\|(b,\vu,j_0,\vc j_1)\|_{X_\eps^1}\,d\tau
\\+\int_0^t\Bigl(\|F\|_{\dot B^{\frac n2}_{2,1}}+\|F-T_{\vu}\cdot\nabla b\|_{\dot B^{\frac n2-1}_{2,1}}
+\|\vc G\|_{\dot B^{\frac n2-1}_{2,1}}+\|T_{\vu}\cdot\nabla\vu\|_{\dot B^{\frac n2-1}_{2,1}}\Bigr)\,d\tau\biggr)\cdotp
\end{multline}
Using standard continuity results for the paraproduct and remainder, 
and composition estimates leads to 
$$
\begin{aligned}
\|T'_{\nabla b}\cdot\vu\|_{\dot B^{\frac n2}_{2,1}}
&\leq C\|\nabla b\|_{\dot B^{\frac n2-1}_{2,1}}\|\vu\|_{\dot B^{\frac n2+1}_{2,1}},\\[1ex]
\|k_1(b)\div\vu\|_{\dot B^{\frac n2}_{2,1}}&\leq C\|b\|_{\dot B^{\frac n2}_{2,1}}\|\div\vu\|_{\dot B^{\frac n2}_{2,1}}.\end{aligned}
$$
Hence we have
\begin{equation}\label{eq:U-diff1}
\int_0^t\|F\|_{\dot B^{\frac n2}_{2,1}}\,d\tau\leq C U^2(t).
\end{equation}
We also have
$$
\begin{aligned}
\|T_{\vu}\cdot\nabla\vu\|_{\dot B^{\frac n2-1}_{2,1}}&\leq C\|\vu\|_{\dot B^{\frac n2-1}_{2,1}}
\|\nabla\vu\|_{\dot B^{\frac n2}_{2,1}},\\
\|T'_{\nabla\vu}\cdot\vu\|_{\dot B^{\frac n2}_{2,1}}&\leq C\|\nabla\vu\|_{\dot B^{\frac n2}_{2,1}}
\|\vu\|_{\dot B^{\frac n2-1}_{2,1}},\\
\|k_2(b)\wt\cA\vu\|_{\dot B^{\frac n2-1}_{2,1}}&\leq C\|b\|_{\dot B^{\frac n2}_{2,1}}\|\nabla^2\vu\|_{\dot B^{\frac n2-1}_{2,1}}, \\
\|k_3(b)\nabla b\|_{\dot B^{\frac n2-1}_{2,1}}&\leq C\|b\|_{\dot B^{\frac n2}_{2,1}}\|\nabla b\|_{\dot B^{\frac n2-1}_{2,1}}.\end{aligned}
$$
Bounding $\wt\cL\cM k_4(b)\vc j_1$ is slightly more involved as it is not
true that the low frequencies of $\vc j_1$ are bounded in $L^1(\R_+;\dot B^{\frac n2-1}_{2,1}).$
However, one may write that 
$$
\wt\cL\cM \vc j_1=\wt\cL\cM\vc j_1^{h,\wt\cL\cM}+ \wt\cL\cM\vc\fj_1^{\ell,\wt\cL\cM}
+\cL_s^{-1}\nabla b^{\ell,\wt\cL\cM}-n^{-1/2}\nabla j_0^{\ell,\wt\cL\cM}.
$$
Therefore
$$\displaylines{
\wt\cL\cM\| k_4(b)\vc j_1\|_{L^1_t(\dot B^{\frac n2-1}_{2,1})}
\lesssim \wt\cL\cM\bigl(\|\vc j_1\|_{L^1_t(\dot B^{\frac n2-1}_{2,1})}^{h,\wt\cL\cM}
+\|\vc\fj_1\|_{L^1_t(\dot B^{\frac n2-1}_{2,1})}^{\ell,\wt\cL\cM}\bigr)\|b\|_{L^\infty_t(\dot B^{\frac n2}_{2,1})}
\hfill\cr\hfill+\|b\|_{L^2_t(\dot B^{\frac n2}_{2,1})}\bigl(\cL_s^{-1}
\|\nabla b\|^{\ell,\wt\cL\cM}_{L^2_t(\dot B^{\frac n2-1}_{2,1})}
+\|\nabla j_0\|^{\ell,\wt\cL\cM}_{L^2_t(\dot B^{\frac n2-1}_{2,1})}\bigr).}
$$
Hence 
\begin{equation}\label{eq:U-diff2}
\int_0^t\bigl(\|T_{\vu}\cdot\nabla\vu\|_{\dot B^{\frac n2-1}_{2,1}}
+\|\vc G\|_{\dot B^{\frac n2-1}_{2,1}}\bigr)\,d\tau\leq  CU^2(t).
\end{equation}
Finally 
$$
\begin{aligned}
\|\vu\cdot\nabla b\|_{\dot B^{\frac n2-1}_{2,1}}\leq& C\|\vu\|_{\dot B^{\frac n2}_{2,1}}\|\nabla b\|_{\dot B^{\frac n2-1}_{2,1}},\\[1ex]
\|k_1(b)\div\vu\|_{\dot B^{\frac n2-1}_{2,1}}\leq& C\|b\|_{\dot B^{\frac n2}_{2,1}}\|\div\vu\|_{\dot B^{\frac n2-1}_{2,1}}.
\end{aligned}
$$
Therefore, by Cauchy-Schwarz inequality 
\begin{equation}\label{eq:U-diff3}
\int_0^t\|F-T_{\vu}\cdot\nabla b\|_{\dot B^{\frac n2-1}_{2,1}}\,d\tau\leq CU^2(t).
\end{equation}
Inserting   \eqref{eq:U-diff1}, \eqref{eq:U-diff2}, \eqref{eq:U-diff3} in \eqref{eq:U-diff} and remembering that $\dot B^{\frac n2}_{2,1}
\hookrightarrow L^\infty$ (to ensure that, say, $|b|\leq1/2$ if $\|b\|_{\dot B^{\frac n2}_{2,1}}$ is small enough), we end up with 
$$
U(t)\leq C(U_0+U^2(t))\quad\hbox{for all }\ t\geq0.
$$
By a standard bootstrap argument, we easily deduce that 
$$
U(t)\leq 2CU_0\quad\hbox{for all }\ t\geq0,
$$
provided the data have been chosen so that  $4C^2U_0\leq1.$
\qed


\subsection{Study of the limit system}

In this paragraph, we prove the existence and  uniqueness of strong (small) solutions with critical regularity
for Systems \eqref{eq:noneq1} and \eqref{eq:noneq2}. 
We shall give a common proof that works for both systems. 

Before giving the global existence statement, let us introduce the solution space 
\begin{itemize}
\item If $m\in(0,+\infty)$ (that is for System \eqref{eq:noneq1}) then 
Initial data will be taken in the space $\cX^\nu$ which is the set of triplets  $(b,\vu,j_0)$ satisfying 
$$
\|(b,\vu,j_0)\|_{\cX^\nu}:=\|b\|_{\dot B^{\frac n2-1}_{2,1}}^{\ell,\nu^{-1}}+
\nu\|b\|_{\dot B^{\frac n2}_{2,1}}^{h,\nu^{-1}}+\|\vu\|_{\dot B^{\frac n2-1}_{2,1}}+
\nu\|j_0\|_{\dot B^{\frac n2}_{2,1}}^{\ell,\nu^{-1}}
+\nu^{-1}\|j_0\|_{\dot B^{\frac n2-2}_{2,1}}^{h,\nu^{-1}}<\infty,
$$
and  the solution space $\cY^\nu$ will be  the set of triplets $(b,\vu,j_0)$
in  $\cC_b(\R_+;\cX^\nu)$ satisfying
$$
\|(b,\vu,j_0)\|_{\cY^\nu}\!:=\sup_{t\geq0} \|(b,\vu,j_0)(t)\|_{\cX^\nu}
+\!\int_{\R^+}\!\!\Bigl(\|j_0-b\|_{\dot B^{\frac n2}_{2,1}}+\nu\|b\|_{\dot B^{\frac n2\!+\!1}_{2,1}}^{\ell,\nu^{-1}}
+\nu\|\vu\|_{\dot B^{\frac n2\!+\!1}_{2,1}}+\|j_0\|_{B^{\frac n2}_{2,1}}^{h,\nu^{-1}}\Bigr)d\tau.
$$
\item 
If  $m=+\infty$  (that is for System \eqref{eq:noneq2}), 
Initial data will be taken in the space $\cX^\nu_\infty$ which is the set of triplets 
$(b,\vu,j_0)$ satisfying 
$$
\|(b,\vu,j_0)\|_{\cX^\nu_\infty}:=\|b\|_{\dot B^{\frac n2-1}_{2,1}}^{\ell,\nu^{-1}}+
\nu\|b\|_{\dot B^{\frac n2}_{2,1}}^{h,\nu^{-1}}+\|\vu\|_{\dot B^{\frac n2-1}_{2,1}}+
\nu\|j_0\|_{\dot B^{\frac n2}_{2,1}}<\infty,
$$
and the solution space $\cY^\nu_\infty$ will be  the set of triplets $(b,\vu,j_0)$
in  $\cC_b(\R_+;\cX^\nu_\infty)$ satisfying
$$\displaylines{
\|(b,\vu,j_0)\|_{\cY^\nu_\infty}:=\sup_{t\geq0} \|(b,\vu,j_0)(t)\|_{\cX^\nu_\infty}
\hfill\cr\hfill+\int_{\R^+}\Bigl(\|j_0-b\|_{\dot B^{\frac n2}_{2,1}}+\nu\|b\|_{\dot B^{\frac n2+1}_{2,1}}^{\ell,\nu^{-1}}
+\nu\|\vu\|_{\dot B^{\frac n2+1}_{2,1}}+\|j_0\|_{B^{\frac n2}_{2,1}}^{h,\nu^{-1}}\Bigr)d\tau<\infty.}
$$
\end{itemize}
\begin{Theorem}\label{th:noneqlim1}
There exist two positive constants $c$ and $C$ so that if 
\begin{eqnarray}\label{eq:smallnoneq2}
&&\quad\|(b_0,\vu_0,j_{0,0})\|_{\cX^\nu}  \leq c\nu
\quad(\hbox{case }\ m<+\infty),\\\label{eq:smallnoneq1}
&\quad\hbox{or}&\quad\|(b_0,\vu_0,j_{0,0})\|_{\cX^\nu_\infty} \leq c\nu\quad(\hbox{case }\ m=+\infty),
\end{eqnarray}
then System \eqref{eq:noneq1} (resp. \eqref{eq:noneq2})   admits a unique solution in the space $\cY^\nu$ (resp. $\cY^\nu_\infty$) 
satisfying in addition,
\begin{eqnarray}\label{eq:estnoneq2} 
&&\|(b,\vu,j_0)\|_{\cY^\nu}\leq C\|(b_0,\vu_0,j_{0,0})\|_{\cX^\nu}\quad\hbox{if }\ m<+\infty,\\
\label{eq:estnoneq1} 
&&\|(b,\vu,j_0)\|_{\cY^\nu_\infty}\leq  C\|(b_0,\vu_0,j_{0,0})\|_{\cX^\nu_\infty}\quad\hbox{if }\ m=+\infty\cdotp
\end{eqnarray}\end{Theorem}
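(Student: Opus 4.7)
The plan is to adapt the strategy developed for \eqref{eq:NSrad} in the previous subsection. First I would reduce to $\nu=1$ by the scaling \eqref{eq:nu}, so that the coefficients $\kappa/(n\nu)$ and $\kappa\nu/(n^2 m)$ in \eqref{eq:noneq1} (or just $\kappa/(n\nu)$ in \eqref{eq:noneq2}) become of order one. Next I would paralinearize the convection terms $\vu\cdot\nabla b$, $\vu\cdot\nabla\vu$ and $\vu\cdot\nabla j_0$ in order to circumvent the loss of one derivative caused by $b\in\dot B^{\frac n2}_{2,1}$, exactly the device used in Proposition \ref{p:para-diff}. Existence then follows from a Friedrichs-type spectral truncation scheme along the lines of \cite{BCD}, Chap. 10, and uniqueness from the standard stability argument of \cite{DD}, so the essential work consists in establishing the \emph{a priori} bounds \eqref{eq:estnoneq2} or \eqref{eq:estnoneq1}.

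For the linear analysis of the paralinearized system, I would split the unknowns into the fluid block $(b,\vu)$ and the scalar $j_0$, coupled through $\frac1n(1+k_4(b))\nabla j_0$ in the momentum equation and through $\frac{\kappa}{n\nu}b$ in the $j_0$ equation. For the fluid block, the Lyapunov functional $\cU_\rho$ of \eqref{def:U} localized at frequency $\ddk$ yields parabolic smoothing on $\vu$ and the standard low/high frequency damping of $b$ in $\dot B^{\frac n2-1}_{2,1}$ at low frequencies and $\dot B^{\frac n2}_{2,1}$ at high ones. For $j_0$, the most informative quantity is the difference $j_0-b$, which satisfies, in the linear setting,
\begin{equation*}
\partial_t(j_0-b)+\frac{\kappa}{n\nu}(j_0-b)-\frac{\kappa\nu}{n^2 m}\Delta j_0=-\partial_t b=\div\vu,
\end{equation*}
so that an $L^2$ energy estimate produces the $L^1_t(\dot B^{\frac n2}_{2,1})$ bound on $j_0-b$ that appears in the norms of both $\cY^\nu$ and $\cY^\nu_\infty$. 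When $m<+\infty$ the Laplacian term additionally smooths $j_0$ parabolically, which accounts for the asymmetric $\dot B^{\frac n2}$-low/$\dot B^{\frac n2-2}$-high regularity of $j_0$ in $\cX^\nu$; when $m=+\infty$ this smoothing is lost and $j_0$ must be placed uniformly in $\dot B^{\frac n2}_{2,1}$ as in $\cX^\nu_\infty$.

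The nonlinear closure then proceeds along the lines of the proof of Theorem \ref{th:diff1}: the linear estimates are applied with source terms $F:=-T'_{\nabla b}\cdot\vu-k_1(b)\div\vu$, $\vc G:=-T'_{\nabla\vu}\cdot\vu+k_2(b)\wt\cA\vu-k_3(b)\nabla b-\frac1n k_4(b)\nabla j_0$ and an analogous source in the $j_0$ equation, which are bounded by the product law $\|fg\|_{\dot B^{\frac n2-1}_{2,1}}\lesssim\|f\|_{\dot B^{\frac n2}_{2,1}}\|g\|_{\dot B^{\frac n2-1}_{2,1}}$ and the composition estimate $\|k_i(b)\|_{\dot B^{\frac n2}_{2,1}}\lesssim\|b\|_{\dot B^{\frac n2}_{2,1}}$, both valid thanks to $\dot B^{\frac n2}_{2,1}\hookrightarrow L^\infty$ and to the smallness of $\|b\|_{\dot B^{\frac n2}_{2,1}}$. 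Setting $U_0:=\|(b_0,\vu_0,j_{0,0})\|_{\cX^\nu}$ (respectively $\cX^\nu_\infty$) and $U(t):=\|(b,\vu,j_0)\|_{\cY^\nu(t)}$ (respectively $\cY^\nu_\infty(t)$), we obtain an inequality of the form $U(t)\leq C(U_0+U^2(t))$ that closes by a standard bootstrap provided $4C^2 U_0\leq 1$.

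The main obstacle is the degenerate case $m=+\infty$, in which $j_0$ enjoys no parabolic smoothing and therefore no direct $L^1_t$-control of $\nabla j_0$ is available to feed into the momentum equation. The resolution, encoded in the very definition of $\cY^\nu_\infty$, is to write $\nabla j_0=\nabla(j_0-b)+\nabla b$ and exploit the $L^1_t(\dot B^{\frac n2}_{2,1})$ bound on $j_0-b$ (from the damped structure sketched above, fed by $\div\vu$) together with the $L^1_t$-smoothing of $b$ already provided by the fluid block. A secondary technical point in the case $m<+\infty$ is that the mismatched low/high regularity of $j_0$ forces one to split $\nabla j_0^{\ell,\nu^{-1}}$ from $\nabla j_0^{h,\nu^{-1}}$ when estimating the coupling $k_4(b)\nabla j_0$ in the momentum equation, much as was done for the term $\wt\cL\cM\vc j_1$ in the proof of Theorem \ref{th:diff1}.
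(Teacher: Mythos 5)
Your overall plan—rescaling to $\nu=1$, paralinearizing the fluid convection terms, splitting the spectral analysis into low/medium/high frequencies, and closing by bootstrap—matches the paper's structure, and your remark about decomposing $\nabla j_0=\nabla(j_0-b)+\nabla b$ in the nonlinear estimate is exactly right. However, there is a genuine gap at low frequencies. You propose to work with $j_0-b$, whose equation reads (in Fourier, linearized)
\begin{equation*}
\d_t\widehat{(j_0-b)}+\wt\kappa\bigl(1+\wt m^{-1}\rho^2\bigr)\widehat{(j_0-b)}+\wt\kappa\wt m^{-1}\rho^2\,\wh b=\rho\,\wh d,
\end{equation*}
and feed the $L^1_t$ bound on $j_0-b$ back into the momentum equation through the source $\frac1n\rho\,\widehat{(j_0-b)}$. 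The trouble is that the coupling goes \emph{both} ways at order $\rho$: the $(j_0\!-\!b)$-block sees $\rho\,\wh d$ as a source, and the fluid block sees $\frac1n\rho\,\widehat{(j_0-b)}$. Trying to absorb these cross terms (say by Young's inequality applied to the coupled Lyapunov functional) forces a lower bound on $\wt\kappa$ of the order of an absolute constant times $1/n$, which is strictly stronger than the mere stability condition $\kappa>1$, i.e.\ $\wt\kappa>1/n$. The paper sidesteps this by working instead with the shifted quantity $\zeta_0:=j_0-b-\wt\kappa^{-1}\Lambda d$, which is chosen precisely so that the $\rho\,\wh d$ source vanishes to leading order; the equation for $\wh\zeta_0$ then has only $O(\rho^2)\wh b+O(\rho^3)\wh d$ feedback, and the absorption closes for all $\rho\lesssim m/(1+m)$ under the minimal stability assumption. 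Since the $\cY^\nu$ norm asks for $\|j_0-b\|_{L^1_t(\dot B^{n/2}_{2,1})}$, note that at low frequency this is recovered from $\rho|\wh\zeta_0|$ plus $\wt\kappa^{-1}\rho^2|\wh d|$, both of which are controlled. Without this change of variable your scheme does not close on the full range $\kappa>1$.

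Two further remarks. First, the target system \eqref{eq:noneq1} (resp.\ \eqref{eq:noneq2}) has \emph{no} convection in the $j_0$ equation, so there is no $\vu\cdot\nabla j_0$ to paralinearize and no ``analogous source'' to include there; the paralinearization only concerns the $b$ and $\vu$ equations. Second, at high frequencies the paper works with $\delta:=d-\Lambda^{-1}b$ in order to make the damping of $b$ manifest, which is the standard device for the barotropic system; your sketch does not mention it, and the plain Lyapunov functional $\cU_\rho$ alone does not directly yield the high-frequency estimates in the form needed.
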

\bProof 
Set $\wt\kappa:=\kappa/n$ and $\wt m:=mn.$
As usual, it is enough to treat  the case $\nu=1$ as performing 
the change of unknowns
$$
(b,u,j_0)(t,x)=(\wt b,\wt u,\wt j_0)(\nu^{-1}t,\nu^{-1}x),
$$
gives Systems \eqref{eq:noneq1} or \eqref{eq:noneq2} for $(\wt b,\wt u,\wt j_0)$ with $\nu=1$ and 
$\wt\cA:=\cA/\nu$ and, obviously
$$
\|(b,u,j_0)(t)\|_{\cX^\nu}=\nu\|(\wt b,\wt u,\wt j_0)(\nu^{-1}t)\|_{\cX^1}
\quad\hbox{and}\quad
\|(b,u,j_0)\|_{\cY^\nu}=\nu\|(\wt b,\wt u,\wt j_0)\|_{\cY^1}.
$$
Let us start with 
the study of the linearized equations with no source term, namely
\begin{equation}
\left\{\begin{array}{l}
\d_tb+\div\vu=0,\\[1ex]
\d_t\vu-\wt\cA\vu+\nabla b+n^{-1}\nabla j_0=\vc 0,\\[1ex]
\d_t j_0+\wt\kappa(j_0-\wt m^{-1}\Delta j_0-b)=0.\end{array}\right.
\end{equation}
The divergence-free part $\cP\vu$ of the velocity satisfies
$$
\d_t\cP\vu-\mu\Delta\cP\vu=\vc 0,
$$
while the coupling between $b,$ $d:=\Lambda^{-1}\div\vu$ and $j_0$ is described by 
\begin{equation}\label{eq:l-diff5}\left\{\begin{array}{l}
\d_tb+\Lambda d=0,\\[1ex]
\d_td-\Delta d-\Lambda b-n^{-1}\Lambda j_0=0,\\[1ex]
\d_tj_0+\wt\kappa (j_0-\wt m^{-1}\Delta j_0-b)=0.\end{array}\right.
\end{equation}
Note that the stability of a similar  system has already been established 
in the previous section for  $\kappa>1$ (or, equivalently, $\wt\kappa>1/n$).

\subsubsection*{Linear estimates for low frequencies}

We introduce $\zeta_0:=j_0-b-\wt\kappa^{-1}\Lambda d$ and notice that
$$\left\{\begin{array}{l}
\d_t\wh b+\rho\wh d=0,\\[1ex]
\d_t\wh d+\rho^2\bigl(1-\frac1{\wt\kappa n}\bigr)\wh d-\bigl(1+\frac1n\bigr)\rho\wh b=\frac1n\rho\wh\zeta_0,\\[1ex]
\d_t\wh\zeta_0+\bigl(\wt\kappa+(\frac{\wt\kappa}{\wt m}+\frac1{\wt\kappa n})\rho^2\bigr)\wh\zeta_0
=-\Bigl(\bigl(1+\frac1n\bigr)\frac1{\wt\kappa}+\frac{\wt\kappa}{\wt m}\Bigr)\rho^2\wh b
+\Bigl(\bigl(1-\frac1{\wt\kappa n}\bigr)\frac1{\wt\kappa}-\frac1{\wt m}\Bigr)\rho^3\wh d.\end{array}\right.
$$
On one hand, because $\wt\kappa n>1,$ the method described in the appendix 
(see in particular \eqref{eq:ODE7})
allows to write that, omitting the dependence with respect to $\wt\kappa$
$$
|(\wh b,\wh d)(t)|+\rho^2\int_0^t|(\wh b,\wh d)|\,d\tau
\lesssim |(\wh b,\wh d)(0)|+\rho\int_0^t|\wh\zeta_0|\,d\tau.
$$
On the other hand, the last equation directly gives
$$
|\wh\zeta_0(t)|+\Bigl(\wt\kappa+(\frac{\wt\kappa}{\wt m}+\frac1{\wt\kappa n})\rho^2\Bigr)\int_0^t|\wh\zeta_0|\,d\tau
\leq|\wh\zeta_0(0)|+C\Bigl(1+\frac1m\Bigr)\rho^2\int_0^t|(\wh b,\rho\wh d)|\,d\tau.
$$
Hence plugging the second inequality in the first one 
$$
|(\wh b,\wh d)(t)|+\rho^2\int_0^t|(\wh b,\wh d)|\,d\tau
\lesssim |(\wh b,\wh d)(0)|+\frac{\rho}{1\!+\!(1\!+\!m^{-1})\rho^2}\biggl(|\wh\zeta_0(0)|+(1\!+\!m^{-1})\rho^2\int_0^t|(\wh b,\rho\wh d)|\,d\tau\biggr)\cdotp
$$
It is clear that the last term may be absorbed by the integral of the l.h.s. if
$\rho\ll \frac m{1+m}\cdotp$  Hence we eventually get
for some small enough $\rho_\ell>0$
\begin{equation}\label{eq:l-diff6}
|(\wh b,\wh d,\rho\wh\zeta_0)(t)|+\rho^2\int_0^t|(\wh b,\wh d)|\,d\tau
+\int_0^t|\rho\wh\zeta_0|\,d\tau\lesssim |(\wh b,\wh d,\rho\wh\zeta_0)(0)|
\quad\hbox{if}\quad\rho\leq\Bigl(\frac m{1+m}\Bigr)\rho_\ell.
\end{equation}

\subsubsection*{Linear estimates for high frequencies}
We set $\delta:=d-\Lambda^{-1}b$ and notice that
$$\left\{\begin{array}{l}
\d_t\wh b+\wh b=-\rho\wh\delta,\\[1ex]
\d_t\wh\delta+(\rho^2-1)\wh\delta=\rho^{-1}\wh b+n^{-1}\rho\wh j_0,\\[1ex]
\d_t\wh j_0+\wt\kappa\bigl(1+\frac{\rho^2}{\wt m}\bigr)\wh j_0=\wt\kappa\wh b.\end{array}\right.
$$
Therefore
$$
|\wh\delta(t)|+(\rho^2-1)\int_0^t|\wh \delta|\,d\tau\leq|\wh\delta(0)|+\rho^{-1}\int_0^t|\wh b|\,d\tau
+\frac1n\int_0^t\rho|\wh j_0|\,d\tau.
$$
At the same time
$$
\displaylines{
|\wh b(t)|+\int_0^t|\wh b|\,d\tau\leq|\wh b(0)|+\rho\int_0^t|\wh \delta|\,d\tau,\cr
|\wh j_0(t)|+\wt\kappa\Bigl(1+\frac{\rho^2}{\wt m}\Bigr)\int_0^t|\wh j_0|\,d\tau\leq|\wh j_0(0)|+\wt\kappa\int_0^t|\wh b|\,d\tau.}
$$
Hence
$$\displaylines{
|\wh\delta(t)|+(\rho^2-1)\int_0^t\!|\wh \delta|\,d\tau\leq|\wh\delta(0)|+
\Bigl(\frac1\rho+\frac\rho n\Bigr)|\wh b(0)|\hfill\cr\hfill+\Bigl(\frac{n^{-1}\wt\kappa^{-1}}{1\!+\!\wt m^{-1}\rho^2}\Bigr)\rho|\wh j_0(0)|
+\Bigl(1+\frac{\rho^2}{n(1+\wt m^{-1}\rho^2)}\Bigr)\!\int_0^t\!|\wh\delta|\,d\tau.}
$$
Therefore there exists a constant $\rho_h$ depending only on $m$ and 
$n$ (with $n\geq2$ if $m=+\infty$)  such that for $\rho\geq \rho_h,$ we have
\begin{equation}\label{eq:l-diff8}
|(\rho\wh b,\wh\delta)(t)|+\min(\rho, m\rho^{-1})|\wh j_0(t)|
+\rho\int_0^t|(\wh b,\wh j_0)|\,d\tau+\rho^2\int_0^t|\wh \delta|\,d\tau\lesssim|(\rho\wh b,\wh\delta,\rho\wh j_0)(0)|.
\end{equation}
Of course,  one may replace $\delta$ with $d$ in \eqref{eq:l-diff8}.

\subsubsection*{Linear estimates for medium frequencies}

The stability argument used just below  \eqref{eq:diff-mf2} allows to write that there exist two constants
$c$ and $C$ depending continuously on $1/m,$ such that if $\rho\in[\frac m{m+1}\rho_\ell,\rho_h]$ then 
\begin{equation}\label{eq:l-diff9}
|(\wh b,\wh d,\wh j_0)(t)|\leq Ce^{-ct} |(\wh b,\wh d,\wh j_0)(0)|.
\end{equation}

\subsubsection*{Estimates for the paralinearized system}

The previous steps allow us to get handy estimates for the following 
paralinearized version of System \eqref{eq:noneq2}
\begin{equation}\label{eq:l-diff10}\left\{\begin{array}{l}
\d_tb+T_{\vv}\cdot\nabla b+\div\vu=F,\\[1ex]
\d_t\vu+T_{\vv}\cdot\nabla\vu-\wt\cA\vu+\nabla b+n^{-1}\nabla j_0=\vc G,\\[1ex]
\d_t j_0+\wt\kappa(j_0-\wt m^{-1}\Delta j_0-b)=0.\end{array}\right.
\end{equation}
More precisely,   following the steps leading to \eqref{eq:l-diff6}, 
\eqref{eq:l-diff8} and \eqref{eq:l-diff9},  introducing  $\zeta_0:=j_0-b-\wt\kappa^{-1}\div\vu,$ and
 arguing  as in Subsection \ref{ss:para}
we end up with\footnote{Here we do not track the dependency with respect to $m.$} 
\begin{multline}\label{eq:l-diff11}
\|j_0(t)\|_{\dot B^{\frac n2}_{2,1}}^{\ell,1}+\|(b,\vu)(t)\|_{\dot B^{\frac n2-1}_{2,1}}^{\ell,1}
+\int_0^t\bigl(\|j_0-b\|_{\dot B^{\frac n2}_{2,1}}^{\ell,1}
+\|(b,\vu)\|_{\dot B^{\frac n2+1}_{2,1}}^{\ell,1}\bigr)\,d\tau\\
\lesssim \|j_0(0)\|_{\dot B^{\frac n2}_{2,1}}^{\ell,1}+\|(b,\vu)(0)\|_{\dot B^{\frac n2-1}_{2,1}}^{\ell,1}
+\int_0^t\Bigl(\|F-T_{\vv}\cdot\nabla b\|_{\dot B^{\frac n2-1}_{2,1}}^{\ell,1}
+\|\vc G-T_{\vv}\cdot\nabla\vu\|_{\dot B^{\frac n2-1}_{2,1}}^{\ell,1}\Bigr)d\tau.
\end{multline}
For high frequencies, we get, in the case $m=+\infty$
\begin{multline}\label{eq:l-diff12a}
\|(b,j_0)(t)\|_{\dot B^{\frac n2}_{2,1}}^{h,1}+\|\vu(t)\|_{\dot B^{\frac n2-1}_{2,1}}^{h,1}
+\int_0^t\bigl(\|(b,j_0)\|_{\dot B^{\frac n2}_{2,1}}^{h,1}+\|\vu\|_{\dot B^{\frac n2+1}_{2,1}}^{h,1}\bigr)\,d\tau\\
\lesssim \|(b,j_0)(0)\|_{\dot B^{\frac n2}_{2,1}}^{h,1}+\|\vu(0)\|_{\dot B^{\frac n2-1}_{2,1}}^{h,1}
+\int_0^t\Bigl(\|F\|_{\dot B^{\frac n2}_{2,1}}^{h,1}+\|\vc G\|_{\dot B^{\frac n2-1}_{2,1}}^{h,1}\Bigr)d\tau
\\+\int_0^t\|\nabla\vv\|_{L^\infty}\bigl(\|(b,j_0)\|_{\dot B^{\frac n2}_{2,1}}+
\|\vu\|_{\dot B^{\frac n2-1}_{2,1}}\bigr)\,d\tau,\end{multline}
and if $0<m<+\infty$
\begin{multline}\label{eq:l-diff12b}
\|b(t)\|_{\dot B^{\frac n2}_{2,1}}^{h,1}+\|j_0(t)\|_{\dot B^{\frac n2-2}_{2,1}}^{h,1}+\|\vu(t)\|_{\dot B^{\frac n2-1}_{2,1}}^{h,1}
+\int_0^t\bigl(\|(b,j_0)\|_{\dot B^{\frac n2}_{2,1}}^{h,1}+\|\vu\|_{\dot B^{\frac n2+1}_{2,1}}^{h,1}\bigr)\,d\tau\\
\lesssim \|b(0)\|_{\dot B^{\frac n2}_{2,1}}^{h,1}+\|j_0(0)\|_{\dot B^{\frac n2-2}_{2,1}}^{h,1}+\|\vu(0)\|_{\dot B^{\frac n2-1}_{2,1}}^{h,1}
+\int_0^t\Bigl(\|F\|_{\dot B^{\frac n2}_{2,1}}^{h,1}+\|\vc G\|_{\dot B^{\frac n2-1}_{2,1}}^{h,1}\Bigr)d\tau
\\+\int_0^t\|\nabla\vv\|_{L^\infty}\bigl(\|b\|_{\dot B^{\frac n2}_{2,1}}+\|j_0\|_{\dot B^{\frac n2-2}_{2,1}}^h+
\|\vu\|_{\dot B^{\frac n2-1}_{2,1}}\bigr)\,d\tau.\end{multline}

\subsubsection*{Proof of existence}

We only establish  global-in-time a priori bounds in 
the space $\cY^1$ or $\cY^1_\infty$ for the solutions to  \eqref{eq:noneq1} or \eqref{eq:noneq2}
with  data satisfying \eqref{eq:smallnoneq2} or \eqref{eq:smallnoneq1}.
Our proof is based on  \eqref{eq:l-diff11},  \eqref{eq:l-diff12a} and \eqref{eq:l-diff12b}
with $\vv=\vu$ 
$$
F=-T'_{\nabla b}\cdot\vv-k_1(b)\div\vu
\quad\hbox{and}\quad
\vc G=-T'_{\nabla\vu}\cdot\vu+k_2(b)\wt\cA\vu-k_3(b)\nabla b-n^{-1}k_4(b)\nabla j_0.
$$
Bounding $\|F-T_{\vv}\cdot\nabla b\|_{\dot B^{\frac n2-1}_{2,1}}^{\ell,1}$ and $\|F\|_{\dot B^{\frac n2}_{2,1}}^{h,1}$ relies on \eqref{eq:U-diff1} and \eqref{eq:U-diff3}. 
 As regards $\vc G,$ the computations that we did in the proof of Theorem \ref{th:diff1} 
 ensure that the first three terms may  be bounded as in  \eqref{eq:U-diff2}. 
 To handle the last term, $k_4(b)\nabla j_0,$ in the case\footnote{The case $m=+\infty$
 does not require that decomposition.}  $m<+\infty$  we  use the decomposition
 $$
k_4(b)\nabla j_0=k_4(b)\nabla b+k_4(b)\nabla(j_0-b).
$$
The first term may be bounded quadratically exactly as $k_3(b)\nabla b.$
As for the last term, we may write 
$$
\|k_4(b)\nabla(j_0-b)\|_{L^1_t(\dot B^{\frac  n2}_{2,1})}\lesssim \|b\|_{L^\infty_t(\dot B^{\frac n2}_{2,1})}
\|\nabla(j_0-b)\|_{L^1_t(\dot B^{\frac n2-1}_{2,1})},
$$
hence it is also bounded by $C\|(b,\vu,j_0)\|_{\cY^1(t)}^2.$
\medbreak
This enables  to conclude that we do have for all $t\in\R_+$
$$
\|(b,\vu,j_0)\|_{\cY^1(t)}\leq C\bigl(\|(b,\vu,j_0)(0)\|_{\cX^1}+\|(b,\vu,j_0)\|_{\cY^1(t)}^2\bigr).
$$
This obviously yields \eqref{eq:estnoneq1}   if \eqref{eq:smallnoneq1} is fulfilled.


\subsubsection*{Proof of uniqueness}

It works the same as for the standard barotropic Navier-Stokes equations: 
we look at the system satisfied by  the difference $(\db,\du,\dj_0)$ between
two solutions $(b^1,\vu^1,j_0^1)$ and $(b^2,\vu^2,j_0^2)$ of \eqref{eq:noneq1}, namely
(denoting $K_i=1+k_i$ for $i=1,2,3,4$)
$$
\left\{\begin{array}{l}\!\d_t\db+\vu^2\cdot\nabla\db=-\du\cdot\nabla b^1+(K_1(b^1)-K_1(b^2))\div\vu^2-K_1(b^1)\div\du,\\[1ex]
\!\d_t\du+\vu^2\!\cdot\!\nabla\du+\du\!\cdot\!\nabla\vu^1-(K_2(b^2)\!-\!K_2(b^1))\cA\vu^2-K_2(b^1)\cA\du
+(K_3(b^2)\!-\!K_3(b^1))\nabla b^2\\
\hspace{3.4cm}+K_3(b^1)\nabla\db +n^{-1}(K_4(b^2)\!-\!K_4(b^1))\nabla j_0^1+n^{-1}K_4(b^2)\nabla\dj_0=\vc 0,\\[1ex]
\!\d_t\dj_0+\wt\kappa\bigl(\dj_0-\db-\frac1{\wt m}\Delta\dj_0\bigr)=0.
\end{array}
\right.
$$
Now, exactly as for the barotropic Navier-Stokes equations, 
it is possible to bound $\db,$ $\du$ and $\dj_0$ just resorting
to basic estimates for the transport and heat equations. 
However, the hyperbolic  nature of the first equation forces us to estimate $(\db,\du,\dj_0)$ with one less derivative, 
namely in 
$$
L^\infty(0,T;\dot B^{\frac n2-1}_{2,1})\times 
\bigl(L^\infty(0,T;\dot B^{\frac n2-2}_{2,1})\cap L^1(0,T;\dot B^{\frac n2}_{2,1})\bigr)^n
\times L^1(0,T;\dot B^{\frac n2-1}_{2,1}).
$$
In dimension $n=3$ combining estimates for the transport and the heat equation allows to get 
uniqueness on a small time interval, then on the whole $\R_+$ by induction.
In dimension $n=2,$ this is slightly more involved as some product laws do not work correctly
if estimating $(\db,\du,\dj_0)$ in the above space (some regularity exponents become too negative). 
Nevertheless this may be overcome by combining logarithmic interpolation and Osgood lemma
(see e.g. \cite{D2} for more details). 
This completes the proof of the theorem.\qed

\begin{Remark}
If $0<m<+\infty$ then one may alternately assume that $j_0$ is in $\dot B^{\frac n2}_{2,1}.$
Taking advantage of the parabolic smoothing given by the equation for $j_0,$ 
it is not difficult to get a solution $(b,\vu,j_0)$ with 
$$
\wt m\int_{\R_+}\|j_0\|_{\dot B^{\frac n2+2}_{2,1}}\leq C\Bigl(\|b_0\|_{\dot B^{\frac n2-1}_{2,1}}
+\nu\|b_0\|_{\dot B^{\frac n2}_{2,1}}
+\|\vu_0\|_{\dot B^{\frac n2-1}_{2,1}}+\|j_0\|_{\dot B^{\frac n2}_{2,1}}\Bigr)\cdotp
$$
\end{Remark}


\subsection{Weak convergence}

Here we justify the weak convergence of \eqref{eq:NSrad} to \eqref{eq:noneq1} or  \eqref{eq:noneq2} under the assumption that
$\liminf\eps^2\cL_s>0$ and that $\cL$ tends to $\frac{\kappa\eps}{n\nu}$  for some $\kappa>1.$
  \begin{Theorem}\label{th:diff-lim1}
 Let the family of data $(b_0^\eps,\vu_0^\eps,j_{0,0}^\eps,\vc j_{1,0}^\eps)_{0<\eps<1}$ satisfy
 Condition \eqref{eq:small-diff}.  Assume in addition that
 \begin{equation}\label{eq:l-diff0}\cL^2\cL_s\nu^2\to m \in(0,+\infty]\quad\hbox{and}\quad\frac{n\nu\cL}\ep\to\kappa\in(1,+\infty).
 \end{equation}
 Then  the global solution  $(b^\eps,\vu^\eps,j_0^\eps,\vc j_1^\eps)$ given  by Theorem \ref{th:diff1} satisfies
$$
\vc j_1^\eps\to\vc 0\quad\hbox{in}\quad L^1(\R_+;\dot B^{\frac n2-1}_{2,1}+\dot B^{\frac n2}_{2,1}),
$$
and, if     $(b^\eps_0,\vu^\eps_0,j_{0,0}^\eps)\rightharpoonup(b_0,\vu_0,j_{0,0})$  
then  $(b^\eps,\vu^\eps,j_0^\eps)$  converges  weakly to the unique solution $(b,\vu,j_0)$ of \eqref{eq:noneq1}
supplemented with initial data $(b_0,\vu_0,j_{0,0})$. 
 \end{Theorem}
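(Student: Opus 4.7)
The plan combines the uniform bounds of Theorem \ref{th:diff1} with weak compactness and with the uniqueness from Theorem \ref{th:noneqlim1}, so that the whole family rather than just a subsequence converges. Inequality \eqref{eq:unif-diff} keeps $(b^\eps,\vu^\eps,j_0^\eps,\vc j_1^\eps)$ uniformly bounded in $Y_\eps^\nu,$ and standard weak-$*$ compactness supplies limit points $(b,\vu,j_0)$ in $\cY^\nu$ (or $\cY^\nu_\infty$ if $m=+\infty$). The first task is to show that the weak limit of $\vc j_1^\eps$ vanishes, via strong convergence in $L^1(\R_+;\dot B^{\frac n2-1}_{2,1}+\dot B^{\frac n2}_{2,1})$. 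Splitting $\vc j_1^\eps=\cP\vc j_1^\eps+\cQ\vc j_1^\eps,$ estimate \eqref{eq:Pj1b} gives $\|\cP\vc j_1^\eps\|_{L^1(\dot B^{\frac n2-1}_{2,1})}\lesssim\eps/(\wt\cL\cM)\to 0$ since $\wt\cL\cM\sim m/\wt\cL\to+\infty.$ The curl-free part is handled after a frequency cut-off at $2^k\sim\eps\cM$: high frequencies enjoy the $Y_\eps^\nu$-bound $\cM\|\vc j_1^\eps\|^{h,\eps\cM}_{L^1(\dot B^{\frac n2-1}_{2,1})}=O(1)$, yielding an $O(\cM^{-1})$ decay; low frequencies are controlled through the identity $\vc j_1=\vc\fj_1-\tfrac{1}{\sqrt n\wt\cL\cM}\nabla j_0+\tfrac{1}{\wt\cL\cL_s\cM}\nabla b$, the first term being $O(\cM^{-1})$ from the $Y_\eps^\nu$-norm and the two others carrying prefactors decaying under \eqref{eq:l-diff0}, with the weaker summand $\dot B^{\frac n2}_{2,1}$ absorbing the regularity loss in the $\nabla j_0$ contribution.

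Once $\vc j_1^\eps\to \vc 0$ is established, the passage to the limit in the first two equations of \eqref{eq:NSradbis} is a standard compactness exercise: uniform $L^1_t$-bounds on $\partial_t b^\eps$ and $\partial_t \vu^\eps$ are read off the equations themselves, and combined with the spatial regularity already provided by $Y_\eps^\nu$ yield Aubin--Lions strong compactness locally in space, which allows the passage to the limit in the convection terms $\vu^\eps\cdot\nabla b^\eps,$ $\vu^\eps\cdot\nabla\vu^\eps$ and in the nonlinearities $k_i(b^\eps).$ The radiative source $\tfrac{\wt\cL\cM}{n}(1+k_4(b^\eps))\vc j_1^\eps$ is a product of a diverging prefactor with a vanishing factor; to identify its limit, I substitute the fourth equation of \eqref{eq:NSradbis}, namely $\wt\cL\cM\vc j_1^\eps=-\eps\partial_t\vc j_1^\eps-\tfrac{1}{\sqrt n}\nabla j_0^\eps,$ in which the $\eps$-multiplied time-derivative disappears weakly, producing the limit source $-\tfrac{1}{n\sqrt n}(1+k_4(b))\nabla j_0$, which matches \eqref{eq:noneq1} after undoing the $\sqrt n$-rescaling \eqref{eq:nu}.

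The limit equation for $j_0$ is recovered by eliminating $\div\vc j_1^\eps$ between the two radiative equations: substituting the fourth into the divergence of the third yields $\eps\partial_t j_0^\eps-\tfrac{1}{n\wt\cL\cM}\Delta j_0^\eps+O(\eps^2/(\wt\cL\cM))=\wt\cL(b^\eps-\sqrt n j_0^\eps),$ and dividing by $\wt\cL$ and passing to the limit via \eqref{eq:l-diff0} reproduces exactly the third equation of \eqref{eq:noneq1} (or of \eqref{eq:noneq2} in the degenerate case $m=+\infty,$ where the Laplacian drops out). Since the limit triple $(b,\vu,j_0)$ inherits the smallness of the initial data, Theorem \ref{th:noneqlim1} supplies uniqueness, forcing the whole family, not just a subsequence, to converge weakly. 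I expect the main obstacle to be precisely this simultaneous control: the same uniform estimates must certify both $\vc j_1^\eps\to 0$ and the rescaled limit $\wt\cL\cM\vc j_1^\eps\to -\tfrac{1}{\sqrt n}\nabla j_0$, i.e.\ the mechanism through which radiation persists in the limit system; this is why the $\vc\fj_1$-corrected norm built into $Y_\eps^\nu$ in Section \ref{s:linear} is essential.
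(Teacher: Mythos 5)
Your overall strategy is the paper's: uniform bounds from Theorem~\ref{th:diff1}, show $\vc j_1^\eps\to 0$ by splitting into well-controlled pieces, identify the limit of the singular source $\frac{\wt\cL\cM}{n}(1+k_4(b^\eps))\vc j_1^\eps$ by substituting the $\vc j_1$-equation, and invoke uniqueness of the limit system to promote subsequential to full-family convergence. Your $\cP/\cQ$ split for $\vc j_1^\eps$ is a cosmetic variant of the paper's decomposition via $\vc\fj_1^\eps$, and the algebraic identity $\wt\cL\cM\vc j_1^\eps=-\eps\d_t\vc j_1^\eps-\frac1{\sqrt n}\nabla j_0^\eps$ is exactly the mechanism the paper exploits.

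The one place you gloss over is precisely where the paper flags the main difficulty. You claim that ``uniform $L^1_t$-bounds on $\d_t\vu^\eps$ are read off the equations themselves, and \ldots yield Aubin--Lions strong compactness.'' But the radiative source is controlled only in $L^1(\R_+;\dot B^{\frac n2-1}_{2,1}+\dot B^{\frac n2}_{2,1})$, so $\d_t\vu^\eps$ is only $L^1$-in-time; an $L^1_t$ bound does not give uniform equicontinuity, so Ascoli is unavailable, and the $r=1$ version of Aubin--Lions (Simon's theorem) only produces compactness in $L^p_{loc}(\R_+;\cdot)$, $p<\infty$, not in $C_{loc}$ --- you would then have to redo the limit passage in the products $\vu^\eps\cdot\nabla\vu^\eps$, $k_i(b^\eps)\cdots$ in this weaker topology, which is doable but not ``standard'' and not what you argued. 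The paper avoids the issue by performing your substitution \emph{first}: it shows that $\d_t\bigl(\vu^\eps+\frac\ep n(1+k_4(b^\eps))\,\vc j_1^\eps\bigr)$ is bounded in $L^2(\R_+;\dot B^{\frac n2-2}_{2,1})$ (the radiative source is absorbed into the time derivative and replaced by the tame $-\frac1n(1+k_4(b^\eps))\nabla j_0^\eps$), and only then applies Ascoli, getting strong $L^\infty_{\mathrm{loc}}$ convergence of $\vu^\eps$ itself after using $\vc j_1^\eps\to 0$. In short, the paper uses the substitution to manufacture a compactness-friendly unknown, whereas you take compactness of $\vu^\eps$ for granted and apply the substitution afterward. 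Your ordering needs either a careful invocation of the $L^1$-time-derivative Aubin--Lions variant together with a reworked product-limit argument, or a return to the paper's modified unknown. (Also, $\d_tb^\eps$ is in fact $L^2_t$-bounded, not merely $L^1_t$, and the paper uses the stronger bound to get $\cC^{1/2}_t$ regularity for $b^\eps$.)
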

\bProof 
{}From \eqref{eq:unif-diff}   we gather that
$$
(\vc\fj_1^\eps)^{\ell,\cL\cM}=\cO(\cM^{-1})\quad\hbox{and}\quad
(\vc j_1^\eps)^{h,\cL\cM}=\cO(\cM^{-1})\ \hbox{ in }\  L^1(\R_+;\dot B^{\frac n2-1}_{2,1}).
$$
Therefore, taking advantage of   the boundedness of the low frequencies of
 $\nabla b^\eps$ and $\nabla j_0^\eps$ in $L^1(\R_+;\dot B^{\frac n2}_{2,1})$,  and of the fact that
 $$
\vc j_1^\ep= \vc\fj_1^\eps-\frac{1}{\cL\cM}\nabla j_0^\eps+\frac1{\cL\cL_s\cM}\nabla b^\eps,$$   we get
 \begin{equation}\label{eq:l-diff1}
 \vc j_1^\eps=\cO(\eps)\quad\hbox{in}\quad  L^1(\R_+;\dot B^{\frac n2-1}_{2,1}+\dot B^{\frac n2}_{2,1}).
 \end{equation}
 Next, we observe that \eqref{eq:unif-diff} implies that
 $(b^\eps)$ and $(\vu^\eps)$ are bounded in 
 $L^\infty(\R_+;\dot B^{\frac n2-1}_{2,1}\cap\dot B^{\frac n2}_{2,1})\cap L^1(\R_+;\dot B^{\frac n2+1}_{2,1}+\dot B^{\frac n2}_{2,1})$
 and $L^\infty(\R_+;\dot B^{\frac n2-1}_{2,1})\cap L^1(\R_+;\dot B^{\frac n2+1}_{2,1}),$ respectively. 
 Note that this implies that $\vu^\eps$ is bounded in $L^2(\R_+;\dot B^{\frac n2}_{2,1}).$
 Because
 $$
 \d_t b^\eps=-\vu^\eps\cdot\nabla b^\eps-k_1(b^\eps)\div\vu^\eps,
 $$
 and the product maps $\dot B^{\frac n2-1}_{2,1}\times\dot B^{\frac n2}_{2,1}$ in $\dot B^{\frac n2-1}_{2,1},$ we thus 
 get in addition that $\d_tb^\eps$ is bounded in $L^2(\R_+;\dot B^{\frac n2-1}_{2,1}),$
 and thus $(b^\eps)$ is bounded in $\cC^{\frac12}(\R_+;\dot B^{\frac n2-1}_{2,1}).$
 Interpolating with the bound in $\cC_b(\R_+;\dot B^{\frac n2}_{2,1}),$ we 
 thus have $(b^\eps)$ bounded in $\cC^{\frac\alpha2}(\R_+;\dot B^{\frac n2-\alpha}_{2,1})$ for all $\alpha\in[0,1].$
  Then combining locally compact Besov embeddings and Ascoli theorem allows to conclude that
 there exists $b$ in  $L^\infty(\R_+;\dot B^{\frac n2-1}_{2,1}\cap\dot B^{\frac n2}_{2,1})\cap L^1(\R_+;\dot B^{\frac n2+1}_{2,1}+\dot B^{\frac n2}_{2,1})$
  and a sequence $(\eps_k)_{k\in\N}$ going
 to $0$ so that, for all $\phi\in\cS$ and all $\alpha\in(0,1]$
 \begin{equation}\label{eq:l-diff2}
 \phi\, b^{\eps_k}\longrightarrow \phi\, b\quad\hbox{in}\quad L^\infty_{loc}(\R_+;\dot B^{\frac n2-\alpha}_{2,1}).
 \end{equation}
 {}From \eqref{eq:unif-diff}, we readily get for some sequence $(\eps_k)_{k\in\N}$ tending to $0$
\begin{equation}\label{eq:l-diff4}
\vu^{\eps_k}\rightharpoonup\vu\quad\hbox{in}\quad L^\infty(\R_+;\dot B^{\frac n2-1}_{2,1})\cap L^1(\R_+;\dot B^{\frac n2+1}_{2,1})\quad\hbox{weak *},
\end{equation}
which, combined with  \eqref{eq:l-diff2} is clearly enough to pass to the limit in the mass equation. 
 Next, we see that \eqref{eq:unif-diff} implies  that  $(j_0^\eps)$ is bounded in $L^\infty(\R_+;\dot B^{\frac n2-1}_{2,1}).$
 Hence there exists $j_0\in L^\infty(\R_+;\dot B^{\frac n2-1}_{2,1})$ and a sequence $(\eps_k)_{k\in\N}$ going
 to $0$ so that 
 $$
 j_0^{\eps_k}\rightharpoonup j_0\quad\hbox{in}\quad 
  L^\infty(\R_+;\dot B^{\frac n2-1}_{2,1})\quad\hbox{weak *}.
 $$
 Because 
 \begin{equation}\label{eq:l-diff3}
\frac1\eps\div\vc j_1^\eps=-\frac\cL\ep\biggl(\frac1{\cL^2(1+\cL_s)}\biggr)\Delta j_0^\eps-\frac1{\cL(1+\cL_s)}\d_t\div\vc j_1^\eps,
 \end{equation}
 and \eqref{eq:l-diff1} implies that $\d_t\vc j_1^\ep\to0$ in the sense of distributions, 
 we deduce that 
 $$
 \frac1\eps\div\vc j_1^\eps\to-\frac{\kappa\nu}{nm}\Delta j_0\quad\hbox{in}\quad\cS'.
 $$
Note that the right-hand side is $0$ if $m=+\infty.$
 Therefore $(b,j_0)$ satisfies    the third line of \eqref{eq:noneq1} (case $m<+\infty$) or \eqref{eq:noneq2} (case $m=+\infty$).
 \smallbreak
 Let us finally pass to the limit in the second equation of \eqref{eq:NSrad}. 
The main difficulty is that, owing to the radiative term which is only bounded in a $L^1$-in-time type
space (namely $L^1(\R_+;\dot B^{\frac n2-1}_{2,1})$ or so), one cannot  take advantage of
some suitable bound of $\d_t\vu^\eps$ so as to glean some equicontinuity and then resort to Ascoli theorem. 
To overcome this, we use the fact that, owing to \eqref{eq:l-diff3}
$$\displaylines{
\d_t\biggl(\vu^\eps+\frac\ep n(1+k_4(b^\eps))\,\vc j_1^\eps\biggr)
=-\vu^\eps\cdot\nabla\vu^\eps+(1+k_2(b^\eps))\cA\vu^\eps\hfill\cr\hfill-(1+k_3(b^\eps))\nabla b^\eps-\frac1n\,(1+k_4(b^\eps))\,\nabla j_0^\eps
+\frac\ep n k'_4(b^\eps)\d_tb^\eps\,\vc j_1^\eps.}
$$
Now, because $(\vu^\eps)$ is bounded in the space $L^\infty(\R_+;\dot B^{\frac n2-1}_{2,1})\cap L^2(\R_+;\dot B^{\frac n2}_{2,1}),$
$(j^\eps_0)$ is bounded in $L^\infty(\R_+;\dot B^{\frac n2-1}_{2,1})$ 
and $(b^\eps)$ is bounded in $(L^2\cap L^\infty)(\R_+;\dot B^{\frac n2}_{2,1}),$ 
product laws in Besov spaces ensure that the first four terms of the r.h.s. 
are bounded in $L^2(\R_+;\dot B^{\frac n2-2}_{2,1})$ (only in  $L^2(\R_+;\dot B^{\frac n2-2}_{2,\infty})$ if $n=2$). 
The same property holds true for the last term for  $(\d_t b^\eps)$ is bounded in
$L^2(\R_+;\dot B^{\frac n2-1}_{2,1})$ and $(\vc j_1^\eps)$ is bounded in
$L^\infty(\R_+;\dot B^{\frac n2-1}_{2,1}).$
Using locally compact Besov embedding and Ascoli theorem, one can now conclude that
there exists some $\vc v$ in $L^\infty(\R_+;\dot B^{\frac n2-1}_{2,1})$ so that 
for all $\phi$ in $\cS$ and $\alpha\in(0,1),$ we have, up to extraction
$$
\phi\Bigl(\vu^{\eps}+\frac{\eps}n(1+k_4(b^\eps))\,\vc j_1^{\eps}\Bigr)\longrightarrow\phi\vc v\quad\hbox{in}\quad 
L^\infty_{loc}(\R_+;\dot B^{\frac n2-1-\alpha}_{2,1}).
$$
Of course, combining with \eqref{eq:l-diff1}, we discover  that $\vv=\vu.$ Hence 
we also have 
$$
\phi\vu^{\eps}\longrightarrow\phi\vc v\quad\hbox{in}\quad L^\infty_{loc}(\R_+\dot B^{\frac n2-1-\alpha}_{2,1})\quad\hbox{for all }\ \phi\in\cS.
$$
It is now easy to conclude that the second  line of \eqref{eq:noneq1} is  fulfilled by $(b,\vu,j_0).$
 \medbreak
 Finally, that the whole family $(b^\ep,\vu^\ep,j_0^\ep)$ (and not only subsequences) converges to $(b,\vu,j_0)$ stems from the fact that
 the solution to \eqref{eq:noneq1} or \eqref{eq:noneq2} is unique.  \qed

 \begin{Remark}
 It is also possible to justify the strong convergence of the solutions of 
 \eqref{eq:NSrad} to \eqref{eq:noneq1} or \eqref{eq:noneq2}
 using \eqref{eq:l-diff1} and performing the difference
 between $(b^\ep,\vu^\ep,j_0^\ep)$ and the solution $(b,\vu,j_0)$ to the limit system. 
 Again, taking advantage of the decay properties of $\fj_1^\ep$ is crucial. 
 Note however that, exactly as in the proof of uniqueness, 
 owing to the hyperbolic nature of the density equation, one cannot
 prove the strong convergence in the solution space. 
 There is a loss of one derivative that may be partially compensated by
 combining with uniform estimates. 
 As we do not think  this approach to bring much  compared to weak compactness, we leave the details to the reader. 
 \end{Remark}


\section{The equilibrium diffusion regime}

This section is devoted to the mathematical justification of 
the \emph{equilibrium diffusion regime} given by \eqref{eq:eq2}. To avoid useless 
technicality, we focus on the case where 
\begin{equation}\label{eq:eqass}
 \cL\to+\infty\quad\hbox{and}\quad \ep\cL\cM\approx1.
 \end{equation}  
 
 \subsection{Linear estimates}

Let us gather the estimates we proved for \eqref{eq:diff0} for the above asymptotics in Section \ref{s:linear}. 

Regarding low frequencies, one may combine  \eqref{eq:diffulf1} and \eqref{eq:diffulf1b}  to get
\begin{multline}\label{eq:eqlf}
|(\wh b,\wh d,\wh j_0,\wh j_1)(t)|+\rho^2\int_0^t|(\wh b,\wh d)|\,d\tau
+\frac{\wt\cL}\ep\int_0^t|\fj_0|\,d\tau+\frac{\wt\cL\cM}\ep\int_0^t|\fj_1|\,d\tau\\
\leq C|(\wh b,\wh d,\wh j_0,\wh j_1)(0)|\quad\hbox{for}\quad 0\leq\rho\leq\sqrt{1+n^{-1}},
\end{multline}
with $\wt\cL:=\nu\cL,$ $
\ \wh\fj_0:=\wh j_0-\sqrt n \wh b-\sqrt n\,\frac\ep{\wt\cL}\rho\wh d\quad\hbox{and}\quad
\wh\fj_1=\wh j_1-\frac\rho{\wt\cL\cM}\wh b.$
\medbreak
For middle frequencies, we have according to \eqref{eq:diff-hf2} and \eqref{eq:diff-hf6} 
 \begin{multline}\label{eq:eqmf}
|(\rho\wh b,\wh d,\wh j_0,\wh j_1)(t)|+\int_0^t|\rho\wh b|\,d\tau+\rho^2\int_0^t|\wh d|\,d\tau+
\rho\int_0^t|\wh j_0|\,d\tau+\wt\cL\cM\int_0^t|\wh j_1|\,d\tau\\
\leq C|(\rho\wh b,\wh d,\wh j_0,\wh j_1)(0)|
\quad\hbox{for}\quad \sqrt{2/n}\leq \rho\leq c\wt\cL\cM,
\end{multline}
and   \eqref{eq:hfdiff3a} gives, if $\cM$ is large enough  
\begin{multline}\label{eq:eqhf}
|(\rho\wh b,\wh d,\wh j_0,\wh j_1)(t)|+\rho^2\int_0^t|\wh d|\,d\tau+\rho\int_0^t|\wh b|\,d\tau+\frac{\wt\cL\cM}\ep\int_0^t|(\wh j_0,\wh j_1)|\,d\tau\\
\leq C|(\rho\wh b,\wh d,\wh j_0,\wh j_1)(0)|\quad\hbox{for  }\ \rho\geq c\wt\cL\cM.
\end{multline}
If $\cM$ is bounded then we must assume that $\rho\geq C_1\wt\cL\cM$ for some $C_1>c.$
However, we have  \eqref{eq:diff4} and $\cM$ bounded implies that $\ep\wt\cL\approx1.$
Therefore \eqref{eq:eqmf} is satisfied up to $\rho\leq C_1\wt\cL\cM.$ 
\medbreak
For the whole system  \eqref{eq:diff} with nonzero source terms $f$ and $\vc g$, we thus 
obtain (taking slightly abusively $c=C_1=1$ for notational simplicity)
\begin{multline}\label{eq:eqestlin}
\|(\vu,j_0,\vc j_1)(t)\|_{\dot B^s_{2,1}} +\|b(t)\|_{\dot B^s_{2,1}}^{\ell,1}+\|b(t)\|_{\dot B^{s+1}_{2,1}}^{h,1}
+\int_0^t\|\vu\|_{\dot B^{s+2}_{2,1}}\,d\tau
+\frac{\wt\cL}\ep\int_0^t\|\fj_0\|_{\dot B^s_{2,1}}^{\ell,1}\,d\tau\\
+\frac{\wt\cL\cM}\ep\int_0^t\|\vc\fj_1\|_{\dot B^s_{2,1}}^{\ell,1}\,d\tau
+\int_0^t\|(b,j_0,\vc j_1)\|_{\dot B^{s+2}_{2,1}}^{\ell,1}\,d\tau
+\int_0^t\|j_0\|_{\dot B^{s+1}_{2,1}}^{m,1,\wt\cL\cM}\,d\tau
+\wt\cL\cM\int_0^t\|\vc j_1\|_{\dot B^s_{2,1}}^{m,1,\wt\cL\cM}\,d\tau\\
+\int_0^t\|b\|^{h,1}_{\dot B^{s+1}_{2,1}}\,d\tau
+\frac{\wt\cL\cM}\ep\int_0^t\|(j_0,\vc j_1)\|_{\dot B^s_{2,1}}^{h,\wt\cL\cM}\,d\tau
\lesssim \|(\vu,j_0,\vc j_1)(0)\|_{\dot B^s_{2,1}} \\+\|b(0)\|_{\dot B^s_{2,1}}^{\ell,1}
+\|b(0)\|_{\dot B^{s+1}_{2,1}}^{h,1}+\int_0^t\bigl(\|f\|_{\dot B^s_{2,1}}^{\ell,1}+\|f\|_{\dot B^{s+1}_{2,1}}^{h,1}+\|\vc g\|_{\dot B^s_{2,1}}\bigr)\,d\tau,
\end{multline}
with 
$$
\fj_0:=j_0-\sqrt n b-\sqrt n\,\frac\ep{\wt\cL}\div\vu\quad\hbox{and}\quad
\vc\fj_1=\vc j_1+\frac1{\wt\cL\cM}\nabla b.
$$
Back to the original variables, that linear analysis induces us to introduce  the following norms 
$$\displaylines{
\|(b,\vu,j_0,\vc j_1)\|_{\wt X^\nu_\eps}:=
\|b\|_{\dot B^{\frac n2-1}_{2,1}}^{\ell,\nu^{-1}}
+\nu\|b\|_{\dot B^{\frac n2}_{2,1}}^{h,\nu^{-1}}
+\|\vu\|_{\dot B^{\frac n2-1}_{2,1}}
+\|(j_0,\vc j_1)\|_{\dot B^{\frac n2-1}_{2,1}}\quad\hbox{and}\quad}
$$
$$\displaylines{
\|(b,\vu,j_0,\vc j_1)\|_{\wt Y^\nu_\eps}:=\sup_{t\geq0} \|(b,\vu,j_0,\vc j_1)(t)\|_{\wt X^\nu_\eps}
+\nu\int_{\R_+}\!\!\biggl(\|(b,j_0,\vc j_1)\|_{\dot B^{\frac n2+1}_{2,1}}^{\ell,\nu^{-1}}
+\|\vu\|_{\dot B^{\frac n2+1}_{2,1}}\biggr)d\tau\hfill\cr\hfill
+\int_{\R_+}\!\biggl(\|b\|_{\dot B^{\frac n2}_{2,1}}^{h,\nu^{-1}}+\frac{\cL\cM}\ep\|\vc\fj_1\|_{\dot B^{\frac n2-1}_{2,1}}^{\ell,\nu^{-1}}+\frac{\cL}\ep\|\fj_0\|_{\dot B^{\frac n2-1}_{2,1}}^{\ell,\nu^{-1}}\biggr)\,d\tau
\hfill\cr\hfill+\int_{\R_+}\biggl(\|j_0\|_{\dot B^{\frac n2}_{2,1}}^{m,\nu^{-1},\cL\cM}+\cL\cM\|\vc j_1\|_{\dot B^{\frac n2-1}_{2,1}}^{m,\nu^{-1},\cL\cM}
  \!+\!\frac{\cL\cM}\ep\|(j_0,\vc j_1)\|_{\dot B^{\frac n2-1}_{2,1}}^{h,\cL\cM}\biggr)d\tau,}
$$
with $\fj_0:=j_0-b-\frac\eps\cL\div\vu$ and $\vc\fj_1:=\vc j_1+\frac1{\cL\cM}\nabla b.$
\medbreak
We denote by $\wt X^\nu_\eps$ and $\wt Y^\nu_\eps$ the corresponding functional spaces
(where time continuity is imposed rather than just boundedness).
Of course, we still have 
$$
\|(b',\vu',j'_0,\vc j'_1)\|_{\wt X^1_\eps}=\nu^{-1}\|(b,\vu,j_0,\vc j_1)\|_{\wt X^\nu_\eps}
\quad\hbox{and}\quad
\|(b',\vu',j'_0,\vc j'_1)\|_{\wt Y^1_\eps}=\nu^{-1}\|(b,\vu,j_0,\vc j_1)\|_{\wt Y^\nu_\eps},
$$
through the change of variables \eqref{eq:nu}, if we replace  $\cL$ by  $\wt\cL$ in the left-hand side. 


\subsection{The paralinearized equations}

In the equilibrium diffusion limit case the estimates for the paralinearized system 
\begin{equation}\label{eq:par-diff2}
\left\{\begin{array}{l}
 \partial_t b+ T_{\vv}\cdot\nabla b+ \div \vu=F,\\[1ex]
  \partial_t \vu+ T_{\vv}\cdot\nabla\vu -\cA\vu + \nabla b
    -\frac{\cL(1+\cL_s)}n\vc j_1=\vec G,\\[1ex]
 \partial_t j_0+ \frac{\mbox{div}\,\vec j_1}{n\eps}
+\frac\cL\eps (j_0-b)=0,\\[1ex]
 \partial_t \vec j_1 + \frac{ \nabla j_0}\eps+\frac{\cL(1+\cL_s)}\eps\vec j_1=\vec 0.
 \end{array}\right.
\end{equation}
recast as follows
\begin{Proposition}\label{p:para-diff2} For any 
smooth solution $(b,\vu,j_0,\vc j_1)$ we have the following a priori estimate for \eqref{eq:par-diff}
$$\displaylines{
\|(b,\vu,j_0,\vc j_1)\|_{\wt Y^\nu_\eps(t)}\leq C\biggl(\|(b,\vu,j_0,\vc j_1)(0)\|_{\wt X^1_\eps}
+\int_0^t\|\nabla\vv\|_{L^\infty}\|(b,\vu,j_0,\vc j_1)\|_{\wt X^\nu_\eps}\,d\tau\hfill\cr\hfill
+\int_0^t\|(\nabla F,\vc G)\|_{\dot B^{\frac n2-1}_{2,1}}^{h,1}\,d\tau
+\int_0^t\|(F-T_{\vv}\cdot\nabla b,\vc G-T_{\vv}\cdot\nabla\vu)\|_{\dot B^{\frac n2-1}_{2,1}}^{\ell,1}\,d\tau
+\int_0^t\|T_{\vv}\cdot\nabla b-F\|_{\dot B^{\frac n2-1}_{2,1}}^{m,1,\wt\cL\cM}\,d\tau
\biggr)\cdotp}
$$
\end{Proposition}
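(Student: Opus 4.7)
The plan is to follow the three-frequency analysis already carried out for Proposition \ref{p:para-diff}. After the rescaling \eqref{eq:nu} reducing to $\nu=1$ (with $\cL$ replaced by $\wt\cL$), I would apply $\ddk$ to \eqref{eq:par-diff2} and split the Fourier side at the thresholds $1$ and $\wt\cL\cM$ that are visible on the linear bounds \eqref{eq:eqlf}, \eqref{eq:eqmf} and \eqref{eq:eqhf}. As in Lemma~4.1 of \cite{DD}, the para-convection commutators $[\ddk,T_{\vv}\cdot\nabla]b$ and $[\ddk,T_{\vv}\cdot\nabla]\vu$ are controlled by $\|\nabla\vv\|_{L^\infty}\|\dot\Delta_{k'}(b,\vu)\|_{L^2}$ with $k'\sim k$; summing with weight $2^{k(\frac n2-1)}$ this produces exactly the $\|\nabla\vv\|_{L^\infty}\|(b,\vu,j_0,\vc j_1)\|_{\wt X^\nu_\eps}$ contribution.

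In the low-frequency range $2^k\leq 1$ the full para-convection contribution is absorbed into the source, and the Fourier-Plancherel version of \eqref{eq:eqlf} directly yields the low-frequency components of $\|\cdot\|_{\wt Y^\nu_\eps}$ together with the source contributions $\|F-T_{\vv}\cdot\nabla b\|^{\ell,1}_{\dot B^{\frac n2-1}_{2,1}}$ and $\|\vc G-T_{\vv}\cdot\nabla\vu\|^{\ell,1}_{\dot B^{\frac n2-1}_{2,1}}$. In the high-frequency range $2^k\geq\wt\cL\cM$, I would combine the Lyapunov functional \eqref{eq:lya1} for $(b,\vu)$ with the cross-term Lyapunov quantity used to derive \eqref{eq:hfdiff2}, namely $|\ddk j_0|^2+|\ddk\vc j_1|^2-\kappa\,\wt\cL\cM\,2^{-2k}(\ddk j_0\,|\,\ddk\div\vc j_1)_{L^2}$, the off-diagonal term being absorbable as soon as $\wt\cL\cM/2^k$ is small enough; this range only generates sources $\|\nabla F\|^{h,1}_{\dot B^{\frac n2-1}_{2,1}}$ and $\|\vc G\|^{h,1}_{\dot B^{\frac n2-1}_{2,1}}$.

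The delicate step is the middle-frequency regime $1\leq 2^k\leq\wt\cL\cM$. For the fluid block one performs the Lyapunov computation \eqref{eq:lya1}, while for the radiative block I would introduce $\vc\zeta_1:=\vc j_1+\frac{1}{\wt\cL\cM}\nabla b$ (i.e.\ exactly the $\vc\fj_1$ of the statement) so that the leading-order coupling with $\nabla b$ is removed, as in the derivation of \eqref{eq:eqmf}. Differentiating $\vc\zeta_1$ in time and inserting the first line of \eqref{eq:par-diff2} produces a forcing proportional to $\frac{1}{\wt\cL\cM}\nabla(F-T_{\vv}\cdot\nabla b)$; at frequency $2^k\lesssim\wt\cL\cM$ the prefactor has size $2^k/(\wt\cL\cM)\leq 1$, so after summation this contribution becomes precisely $\|T_{\vv}\cdot\nabla b-F\|^{m,1,\wt\cL\cM}_{\dot B^{\frac n2-1}_{2,1}}$, matching the extra middle-frequency term in the statement.

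The main obstacle is precisely this middle regime. Unlike the nonequilibrium case, where $\wt\cL$ remains of order $\ep$, here $\wt\cL\to+\infty$ so the window $[1,\wt\cL\cM]$ is wide and the fluid/radiative coupling cannot be neglected; moreover one \emph{cannot} reduce the medium-frequency source to $\nabla F$ alone because the paralinearized mass equation only controls $\d_t b+\div\vu$ up to a para-convection. This is exactly what forces the indispensable $\|T_{\vv}\cdot\nabla b-F\|^{m,1,\wt\cL\cM}$ term to appear. Once the three frequency blocks are summed with weight $2^{k(\frac n2-1)}$, reverting the rescaling \eqref{eq:nu} closes the argument.
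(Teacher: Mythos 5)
Your plan correctly identifies the three-range Littlewood--Paley structure and the role of Lemma~4.1 of \cite{DD} for the commutators, and your treatment of the low- and high-frequency ranges matches the paper. But the middle-frequency argument, which is exactly where Proposition~\ref{p:para-diff2} differs from Proposition~\ref{p:para-diff}, is not the paper's, and the mechanism you describe would not actually produce the extra term $\|T_{\vv}\cdot\nabla b-F\|^{m,1,\wt\cL\cM}_{\dot B^{\frac n2-1}_{2,1}}$ at the right order.

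You attribute the extra source to the change of variable $\vc\zeta_1:=\vc j_1+\frac1{\wt\cL\cM}\nabla b$. This is the $\vc\fj_1$ of the \emph{low}-frequency analysis of the equilibrium case, not the one used to obtain \eqref{eq:eqmf}; the middle-frequency parabolic decay uses $\zeta_1:=j_1-(\sqrt n\,\wt\cL\cM)^{-1}\rho\,j_0$, involving $j_0$, not $b$. Moreover, even following your choice, the forcing $\frac1{\wt\cL\cM}\nabla(F-T_\vv\cdot\nabla b)$ carries a prefactor $2^k/(\wt\cL\cM)\leq1$, and once combined with the strong damping $\wt\cL\cM/\ep$ of the $\vc j_1$ equation (which yields a factor $\ep/(\wt\cL\cM)$ in the $L^1$-in-time bound against the target weight $\wt\cL\cM$), the resulting contribution is of order $\ep\,2^k/(\wt\cL\cM)^2\,\|\ddk(T_\vv\cdot\nabla b-F)\|_{L^2}$, which is far smaller than what the statement claims — so this cannot be what forces the extra term.

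The genuine mechanism, and the missing idea in your proposal, is the substitution $\zeta_0:=j_0-\sqrt n\,b$ needed for the exponential decay Lyapunov functional (the functional $\cU_\rho^2+K\cJ_\rho^2$ with $\cJ_\rho^2=|\wh\zeta_0|^2+|\wh j_1|^2$) in the range $\sqrt{2/n}\leq\rho\leq c\,\wt\cL\cM$. Differentiating $\zeta_0$ in time brings in $-\sqrt n\,\d_t b$, and the paralinearized mass equation replaces $\d_t b$ with $F-T_\vv\cdot\nabla b-\div\vu$, producing a forcing $\sqrt n\,(T_\vv\cdot\nabla b-F)$ that enters the $\zeta_0$ equation at zeroth order, without any $1/(\wt\cL\cM)$ attenuation:
$$
\d_t\zeta_0+\frac{\wt\cL}\ep\zeta_0+\frac1{\ep\sqrt n}\div\vc j_1-\sqrt n\,\div\vu=\sqrt n\,(T_\vv\cdot\nabla b-F).
$$
After Fourier--Plancherel, Cauchy--Schwarz in the Lyapunov estimate and summation over the middle dyadic block, this is exactly what generates the term $\int_0^t\|T_{\vv}\cdot\nabla b-F\|^{m,1,\wt\cL\cM}_{\dot B^{\frac n2-1}_{2,1}}\,d\tau$ in the statement; your argument does not account for it.
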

\bProof
Except in the middle frequencies range, the proof goes along the lines of the corresponding 
result in the non-equilibrium case. 
Still  assuming  that $\nu=1$ and replacing  $\cL$ with $\wt\cL=\nu\cL$ then, working directly on the localized paralinearized system \eqref{eq:par-diff2},
and  combining Inequalities \eqref{eq:eqlf} to \eqref{eq:eqhf} with  estimates for the para-convection terms gives

\noindent\emph{1. Low frequencies: $2^k\leq C_1.$}

$$
\displaylines{\|\ddk(b,\vu,j_0,\vc j_1)(t)\|_{L^2}+2^{2k}\int_0^t
\|\ddk(b,\vu,j_0,\vc j_1)\|_{L^2}\,d\tau
+\frac{\wt\cL\cM}\ep\int_0^t\|\ddk\vc\fj_1\|_{L^2}\,d\tau\hfill\cr\hfill
+\frac{\wt\cL}\ep\int_0^t\|\ddk\fj_0\|_{L^2}\,d\tau
\lesssim \|\ddk(b,\vu,j_0,\vc j_1)(0)\|_{L^2}
\hfill\cr\hfill+\int_0^t\|\ddk(F-T_{\vv}\cdot\nabla b)\|_{L^2}\,d\tau
+\int_0^t\|\ddk(\vc G-T_{\vv}\cdot\nabla\vu)\|_{L^2}\,d\tau.}
$$

\noindent\emph{2. Medium frequencies: $C_1\leq 2^k\leq c\wt\cL\cM.$}

One has to keep in mind that in order to derive \eqref{eq:eqmf} from \eqref{eq:diff-hf2} and \eqref{eq:diff-hf6}, one has
to consider the system that is fulfilled by $(b,\vu,\zeta_0,\vc j_1)$ with $\zeta_0:=j_0-\sqrt n\, b.$
In particular, a part of the the paraconvection term of $b$ enters in the equation for $\zeta_0$ as we have
$$
\d_t\zeta_0+\frac{\wt\cL}\ep\zeta_0+\frac1{\ep\sqrt n}\div\vc j_1-\sqrt n\div  \vu=\sqrt n(T_\vv\cdot\nabla b-F).
$$ 
Therefore, following  the computations  leading to  \eqref{eq:diff-hf2} and \eqref{eq:diff-hf6}, and using Lemma 4.1 in \cite{DD} 
to bound the convection terms coming from the equations for $b$ and $\vu,$ we end up with 
$$
\displaylines{\|\ddk(\nabla b,\vu,j_0,\vc j_1)(t)\|_{L^2}+2^{2k}\int_0^t\|\ddk\vu\|_{L^2}\,d\tau
+2^k\int_0^t\|(\ddk b,\ddk j_0)\|_{L^2}\,d\tau\hfill\cr\hfill
+\cL(1+\cL_s)\int_0^t\|\ddk\vc j_1\|_{L^2}\,d\tau
\lesssim \|\ddk(\nabla b,\vu,j_0,\vc j_1)(0)\|_{L^2}
+\int_0^t\|\ddk(\nabla F,\vc G)\|_{L^2}\,d\tau
\hfill\cr\hfill+\int_0^t\|\ddk(T_\vv\cdot\nabla b- F)\|_{L^2}\,d\tau+\sum_{k'\sim k}\int_0^t\|\nabla\vv\|_{L^\infty}\|\dot\Delta_{k'}(\nabla b,\vu)\|_{L^2}\,d\tau.}
$$

\noindent\emph{3. High frequencies: $2^k\geq c\wt\cL\cM.$}
We get
$$
\displaylines{\|\ddk(\nabla b,\vu,j_0,\vc j_1)(t)\|_{L^2}+\int_0^t\bigl(2^{2k}\ddk\vu\|_{L^2}
+2^k\|\ddk b\|_{L^2}\bigr)\,d\tau+{\wt\cL\cM}\ep\int_0^t\|\ddk(j_0,\vc j_1)\|_{L^2}\,d\tau\hfill\cr\hfill
\lesssim \|\ddk(\nabla b,\vu, j_0, \vc j_1)(0)\|_{L^2}
+\int_0^t\|\ddk(\nabla F,\vc G)\|_{L^2}\,d\tau
\hfill\cr\hfill+\sum_{k'\sim k}\int_0^t\|\nabla\vv\|_{L^\infty}\|\dot\Delta_{k'}(\nabla b,\vu)\|_{L^2}\,d\tau.}
$$
Putting together all those inequalities completes the proof.
\qed


\subsection{A global existence result}

Our global existence result  with uniform estimates reads
\begin{Theorem}\label{th:diff2}
There exists a positive constant $\eta$ depending only on $\mu/\nu,$ $n$ 
and on the pressure law such that if $\eps\in(0,1)$ and 
the data $(b_0^\eps,\vu_0^\eps,j_{0,0}^\eps,\vc j_{1,0}^\eps)$ satisfy 
\begin{equation}\label{eq:small-diff2}
\|(b_0^\eps,\vu_0^\eps,j_{0,0}^\eps,\vc j_{1,0}^\eps)\|_{\wt X_\eps^\nu}\leq \eta\nu,
\end{equation}
then System  \eqref{eq:NSrad} admits a unique global 
solution $(b^\eps,\vu^\eps,j_0^\eps,\vc j_1^\eps)$ in $\wt Y_\eps^\nu.$
In addition, we have
\begin{equation}\label{eq:unif-diff2}
\|(b^\eps,\vu^\eps,j_0^\eps,\vc j_1^\eps)\|_{\wt Y_\eps^\nu}\leq C
\|(b_0^\eps,\vu_0^\eps,j_{0,0}^\eps,\vc j_{1,0}^\eps)\|_{\wt X_\eps^\nu}.
\end{equation}
\end{Theorem}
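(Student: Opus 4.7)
\bProof
Exactly as for Theorem \ref{th:diff1}, performing the rescaling \eqref{eq:nu} reduces
the proof to the case $\nu=1$ (with $\cL$ replaced by $\wt\cL:=\nu\cL$),
so that $\wt\cL\cM\approx\ep^{-1}.$ Existence of a local solution
may be obtained by the standard spectral truncation method used in \cite{BCD,DD},
and uniqueness follows from classical arguments. It is thus enough to close a priori estimates globally
in the space $\wt Y^1_\ep$ under the smallness assumption \eqref{eq:small-diff2}.
To this end, set
$U_0:=\|(b_0,\vu_0,j_{0,0},\vc j_{1,0})\|_{\wt X_\ep^1}$ and $U(t):=\|(b,\vu,j_0,\vc j_1)\|_{\wt Y_\ep^1(t)}$
(dropping the superscript~$\ep$).

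Apply Proposition \ref{p:para-diff2} with $\vv=\vu,$
$F:=-T'_{\nabla b}\cdot\vu-k_1(b)\div\vu$ and
$$
\vc G:=-T'_{\nabla\vu}\cdot\vu+k_2(b)\wt\cA\vu-k_3(b)\nabla b+\tfrac{\wt\cL\cM}{n}k_4(b)\vc j_1.
$$
The terms involving $T'_{\nabla b}\cdot\vu,$ $T'_{\nabla\vu}\cdot\vu,$ $k_1(b)\div\vu,$ $k_2(b)\wt\cA\vu,$
$k_3(b)\nabla b$ and $T_{\vu}\cdot\nabla\vu$
are estimated \emph{mutatis mutandis} as in \eqref{eq:U-diff1}--\eqref{eq:U-diff3},
via standard paraproduct, remainder, and composition estimates, all yielding bounds of order $U^2(t).$
The new piece $\int_0^t\|T_{\vu}\cdot\nabla b-F\|^{m,1,\wt\cL\cM}_{\dot B^{\frac n2-1}_{2,1}}\,d\tau$
arising in Proposition \ref{p:para-diff2} is harmless since its integrand is already bounded by
$\|T'_{\nabla b}\cdot\vu+k_1(b)\div\vu\|_{\dot B^{\frac n2-1}_{2,1}}$ and thus by $CU^2(t)$
 via Cauchy-Schwarz in time.

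The one genuinely new difficulty is to control the radiative coupling
$\tfrac{\wt\cL\cM}{n}k_4(b)\vc j_1$ in $L^1(\R_+;\dot B^{\frac n2-1}_{2,1}).$
The trick, already used in the non-equilibrium case but now cleaner thanks to the
simpler definition $\vc\fj_1=\vc j_1+(\wt\cL\cM)^{-1}\nabla b,$ is to write
$$
\wt\cL\cM\,\vc j_1=\wt\cL\cM\,\vc j_1^{h,\wt\cL\cM}
+\wt\cL\cM\,\vc j_1^{m,1,\wt\cL\cM}
+\wt\cL\cM\,\vc\fj_1^{\,\ell,1}-\nabla b^{\ell,1}.
$$
Each of the four pieces is controlled by $U(t)$ directly from the
definition of $\|\cdot\|_{\wt Y^1_\ep}$ (recalling that $\ep\wt\cL\cM\approx1$ so that
$\wt\cL\cM\|\cdot\|\approx\tfrac{\wt\cL\cM}{\ep}\|\cdot\|$
for the low- and high-frequency radiative contributions, with $\cO(1)$ constants).
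Product laws in Besov spaces together with $\|k_4(b)\|_{L^\infty_t(\dot B^{\frac n2}_{2,1})}\lesssim\|b\|_{L^\infty_t(\dot B^{\frac n2}_{2,1})}\leq U(t)$
then give
$$
\tfrac{\wt\cL\cM}{n}\|k_4(b)\vc j_1\|_{L^1_t(\dot B^{\frac n2-1}_{2,1})}\leq CU^2(t).
$$

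Plugging all these bounds into the estimate of Proposition \ref{p:para-diff2} yields
$$
U(t)\leq C\bigl(U_0+U^2(t)\bigr)\quad\hbox{for all }t\geq0.
$$
A standard bootstrap argument (taking $U_0\leq(4C^2)^{-1}$) then yields
$U(t)\leq 2CU_0$ for all $t\geq0,$ which is \eqref{eq:unif-diff2}. The main obstacle is of course the
middle/high frequency handling of $\wt\cL\cM\vc j_1,$ which is the reason why the paralinearized
estimate was formulated with the additional $L^1_t(\dot B^{\frac n2-1}_{2,1})$-norm
of $T_\vv\cdot\nabla b-F$ at middle frequencies, and why the
definition of the space $\wt Y^\nu_\ep$ contains
both  $\tfrac{\cL\cM}{\ep}\|\vc\fj_1\|^{\ell,\nu^{-1}}$ and
$\cL\cM\|\vc j_1\|^{m,\nu^{-1},\cL\cM}$ in its time-integral part.
\qed
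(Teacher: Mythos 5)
Your proof is correct and follows essentially the same route as the paper's: apply Proposition~\ref{p:para-diff2} with the same choices of $F$ and $\vc G$, bound all the fluid nonlinearities quadratically as in the non-equilibrium case, decompose $\wt\cL\cM\vc j_1$ by means of the (now simpler) auxiliary quantity $\vc\fj_1$ to control the radiative source in $L^1_t(\dot B^{\frac n2-1}_{2,1}),$ and close by bootstrap.

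Two small slips are worth flagging. First, with $F=-T'_{\nabla b}\cdot\vu-k_1(b)\div\vu$ the middle-frequency integrand is
$$T_{\vu}\cdot\nabla b-F=T_{\vu}\cdot\nabla b+T'_{\nabla b}\cdot\vu+k_1(b)\div\vu=\vu\cdot\nabla b+k_1(b)\div\vu,$$
not $T'_{\nabla b}\cdot\vu+k_1(b)\div\vu$ as you wrote — you dropped the $T_{\vu}\cdot\nabla b$ part. The conclusion is unchanged because the full Bony product $\vu\cdot\nabla b$ is controlled in $L^1_t(\dot B^{\frac n2-1}_{2,1})$ by $\|\vu\|_{L^2_t(\dot B^{\frac n2}_{2,1})}\|b\|_{L^2_t(\dot B^{\frac n2}_{2,1})}\lesssim U^2(t),$ which is precisely the observation the paper makes. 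Second, in the parenthetical for the $\vc j_1^{h,\wt\cL\cM}$ and $\vc\fj_1^{\ell,1}$ pieces, the relation should be $\wt\cL\cM\|\cdot\|\lesssim\frac{\wt\cL\cM}\ep\|\cdot\|$ (using $\ep\leq1$), not $\approx$; the controlled quantity in $\wt Y^1_\ep$ is the stronger one, which is all that is needed.
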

 The proof relies on Proposition \ref{p:para-diff2}. 
 Note in particular that the `new' last term in the estimate of \eqref{p:para-diff2} does not entail a loss of derivative as we simply have
 $$
  \|T_{\vv}\cdot\nabla b-F\|_{L_t^1(\dot B^{\frac n2-1}_{2,1})}^{m,1,\wt\cL\cM}\leq 
  \|\vv\cdot\nabla b\|_{L^1_t(\dot B^{\frac n2-1}_{2,1})}\lesssim \|\vv\|_{L^2_t(\dot B^{\frac n2}_{2,1})}\|b\|_{L^2_t(\dot B^{\frac n2}_{2,1})}.
 $$
 The rest of the proof  works exactly the same as in the non-equilibrium case.\qed


\subsection{Weak convergence}

Here we justify weak convergence to \eqref{eq:eq2}  when  assumption \eqref{eq:eqass} is fulfilled.
 \begin{Theorem}\label{th:diff-lim2}
 Let the family of data $(b_0^\eps,\vu_0^\eps,j_{0,0}^\eps,\vc j_{1,0}^\eps)_{0<\eps<1}$ satisfy
 \eqref{eq:small-diff2}. 
Then  the global solution  $(b^\eps,\vu^\eps,j_0^\eps,\vc j_1^\eps)$ in $\wt Y^\nu_\eps$ given 
by Theorem \ref{th:diff2} satisfies
$$
\vc j_1^\eps\to\vc 0\quad\hbox{in}\quad L^1(\R_+;\dot B^{\frac n2-1}_{2,1}+\dot B^{\frac n2}_{2,1}),
$$
and if $(b_0^\ep,\vu_0^\ep)\rightharpoonup(b_0,\vu_0)$ then 
$(b^\eps,\vu^\eps,j_0^\eps)$  converges weakly to $(b,\vu,b)$ where $(b,\vu)$ stands for the unique solution of
\begin{equation}\label{eq:eqlim}\left\{
\begin{array}{l}
 \partial_t b+ \vu\cdot\nabla b+ k_1(b)\div \vu=0,\\[1ex]
\partial_t \vu + \vu\cdot\nabla\vu -k_2(b)\cA\vu +  \bigl(k_3(b)+n^{-1}k_4(b)\bigr)\nabla b=\vec 0.
\end{array}\right.
\end{equation}supplemented with initial data $(b_0,\vu_0).$
 \end{Theorem}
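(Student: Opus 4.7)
The plan mirrors that of Theorem \ref{th:diff-lim1}, but two features of the equilibrium regime force specific modifications. First, the radiative unknown $j_0^\eps$ now tends to $b$ (not to $0$), as the formal analysis of Section \ref{s:formal} predicts. Second, the radiative force $\frac{\cL\cM}{n}(1+k_4(b^\eps))\vc j_1^\eps$ in the momentum equation is of order one (not $\cO(\eps)$), so passing to the limit requires identifying $\cL\cM\vc j_1^\eps$ with $-\nabla b$ in the sense of distributions.

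I would first extract the decay rate of $\vc j_1^\eps$ by a low/middle/high frequency splitting. At low frequencies, using $\vc j_1^\eps=\vc\fj_1^\eps-(\cL\cM)^{-1}\nabla b^\eps$, the uniform estimate \eqref{eq:unif-diff2} combined with $\eps\cL\cM\approx 1$ yields that $\vc\fj_1^\eps$ is of size $\cO(\eps^2)$ in $L^1(\dot B^{\frac n2-1}_{2,1})$, while $(\cL\cM)^{-1}\nabla b^\eps$ is of size $\cO(\eps)$ in that space. Middle- and high-frequency parts of $\vc j_1^\eps$ are themselves $\cO(\eps)$ and $\cO(\eps^2)$, respectively, reading off the definition of $\|\cdot\|_{\wt Y^\nu_\eps}$. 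Hence $\vc j_1^\eps\to\vc 0$ in $L^1(\R_+;\dot B^{\frac n2-1}_{2,1}+\dot B^{\frac n2}_{2,1})$. Next I would extract weak limits $(b,\vu,j_0)$ up to subsequence: as in Theorem \ref{th:diff-lim1}, the mass equation, standard product laws, and Ascoli's theorem give strong convergence $\phi b^{\eps_k}\to\phi b$ in $L^\infty_{loc}(\R_+;\dot B^{\frac n2-\alpha}_{2,1})$ for all $\phi\in\cS$ and $\alpha\in(0,1]$, while $\vu^\eps$ and $j_0^\eps$ converge weakly-$*$ in their natural spaces. To identify $j_0=b$, I would rewrite the third equation of \eqref{eq:NSrad} as
\[
j_0^\eps-b^\eps=-\frac{\eps}{\cL}\d_tj_0^\eps-\frac{1}{n\cL}\div\vc j_1^\eps,
\]
and test against any $\phi\in\cS$: an integration by parts together with the uniform bounds on $j_0^\eps$ and $\vc j_1^\eps$ shows that the right-hand side tends to zero, because both $\eps/\cL$ and $1/\cL$ vanish as $\cL\to+\infty$. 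Thus $j_0=b$ in the sense of distributions.

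The main obstacle is passing to the limit in the momentum equation, since the radiative source is only controlled in an $L^1$-in-time space, which prevents direct equicontinuity of $\d_t\vu^\eps$. I would adapt the device used in Theorem \ref{th:diff-lim1}: the fourth equation of \eqref{eq:NSrad} yields $\cL\cM\vc j_1^\eps=-\nabla j_0^\eps-\eps\d_t\vc j_1^\eps$, so the momentum equation can be recast as
\[
\d_t\Bigl(\vu^\eps+\frac{\eps}{n}(1+k_4(b^\eps))\vc j_1^\eps\Bigr)=R^\eps,
\]
where $R^\eps$ collects only spatial-derivative terms together with the harmless contribution $\frac{\eps}{n}k_4'(b^\eps)\d_tb^\eps\,\vc j_1^\eps$. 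Product laws in Besov spaces ensure that $R^\eps$ is bounded in $L^2(\R_+;\dot B^{\frac n2-2}_{2,1})$ (or $L^2(\dot B^{\frac n2-2}_{2,\infty})$ when $n=2$). Locally compact Besov embeddings and Ascoli then yield, up to further extraction,
\[
\phi\bigl(\vu^\eps+\tfrac{\eps}{n}(1+k_4(b^\eps))\vc j_1^\eps\bigr)\to\phi\vu\quad\hbox{in}\quad L^\infty_{loc}(\R_+;\dot B^{\frac n2-1-\alpha}_{2,1})
\]
for all $\phi\in\cS$ and $\alpha\in(0,1)$. Since $\eps\vc j_1^\eps\to\vc 0$, the same limit holds for $\phi\vu^\eps$. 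The strong convergence of $b^\eps$ and $\vu^\eps$, the weak-$*$ convergence $\nabla j_0^\eps\rightharpoonup\nabla b$ (since $j_0=b$), and the $L^1$-decay of $\eps\d_t\vc j_1^\eps$, then allow passage to the limit in every nonlinear term and yield the limit system \eqref{eq:eqlim} satisfied by $(b,\vu)$. Whole-family convergence follows from a uniqueness result for that system, a standard small-data critical-regularity statement for a compressible Navier-Stokes equation with modified pressure law, provable by the same arguments as in the uniqueness part of Theorem \ref{th:noneqlim1}, combining transport and heat estimates with the logarithmic interpolation and Osgood lemma trick when $n=2$.
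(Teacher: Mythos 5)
Your proof follows the paper's argument essentially verbatim: the low/middle/high frequency decomposition of $\vc j_1^\eps$ via the uniform estimate and $\eps\cL\cM\approx1$, the identification $j_0=b$ from the rewritten third equation (using $\eps/\cL\to0$ and $1/\cL\to0$), the auxiliary variable $\vu^\eps+\frac\eps n(1+k_4(b^\eps))\vc j_1^\eps$ to recover equicontinuity in time, and whole-family convergence from uniqueness of the limit system. The only difference is cosmetic: you write the coefficients $(1+k_i)$ in the recast momentum equation where the paper's text drops the ``$1$'' (apparently a typo, since the non-equilibrium proof and the formal system \eqref{eq:eq2} keep them), so your version is actually the cleaner one.
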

\bProof 
Let  $\vc\fj_1^\eps=\vc j_1^\eps+\frac{\nabla b^\eps}{\cL\cM}\cdotp$
{}From \eqref{eq:unif-diff2}, we have
$$
\frac{\cL\cM}\ep\|\vc\fj_1^\eps\|^{\ell,\nu^{-1}}_{L^1(\R_+;\dot B^{\frac n2-1}_{2,1})}
+\nu\|\nabla b^\eps\|_{L^1(\R_+;\dot B^{\frac n2}_{2,1})}^{\ell,\nu^{-1}}
+\cL\cM\|\vc j_1^\eps\|_{L^1(\R_+;\dot B^{\frac n2-1}_{2,1})}^{h,\nu^{-1}}\leq C\eta\nu.
$$
Hence, given \eqref{eq:eqass}, we deduce that 
 \begin{equation}\label{eq:l-diff4b}
\vc j_1^\eps=\cO(\ep)\quad\hbox{in}\quad L^1(\R_+;\dot B^{\frac n2-1}_{2,1}\!+\!\dot B^{\frac n2}_{2,1}).
\end{equation}
Using the equation of $\vc j_1^\ep,$ this gives 
\begin{equation}\label{eq:l-diff4a}
\nabla j_0^\eps+\cL(1+\cL_s)\vc j_1^\eps\to 0
\quad\hbox{in the sense of distributions.}
\end{equation}
 As in the non-equilibrium case,  \eqref{eq:unif-diff2} implies that the families 
 $(b^\eps)$ and $(\vu^\eps)$ are bounded in 
 $L^\infty(\R_+;\dot B^{\frac n2-1}_{2,1}\cap\dot B^{\frac n2}_{2,1})\cap L^1(\R_+;\dot B^{\frac n2+1}_{2,1}+\dot B^{\frac n2}_{2,1})$
 and $L^\infty(\R_+;\dot B^{\frac n2-1}_{2,1})\cap L^1(\R_+;\dot B^{\frac n2+1}_{2,1}),$ respectively.
 Hence $(\d_tb^\eps)$ is bounded in $L^2(\R_+;\dot B^{\frac n2-1}_{2,1})$ 
 and we can thus deduce  that there exists $b$ in  $L^\infty(\R_+;\dot B^{\frac n2-1}_{2,1}\cap\dot B^{\frac n2}_{2,1})\cap L^1(\R_+;\dot B^{\frac n2+1}_{2,1}+\dot B^{\frac n2}_{2,1})$
  and a sequence $(\eps_k)_{k\in\N}$ going
 to $0$ so that, for all $\phi\in\cS$ and all $\alpha\in(0,1]$
 \begin{equation}\label{eq:l-diff5b}
 \phi\, b^{\eps_k}\longrightarrow \phi\, b\quad\hbox{in}\quad L^\infty(\R_+;\dot B^{\frac n2-\alpha}_{2,1}).
 \end{equation}
For $(\vu^\eps),$ we still  have the weak convergence result given by \eqref{eq:l-diff4}, 
which suffices to pass to the limit in the mass equation. 
 \medbreak
 Next, we observe that \eqref{eq:unif-diff2} implies that $(j_0^\eps)$ is bounded in $L^1(\R_+;\dot B^{\frac n2-1}_{2,1}\!+\!\dot B^{\frac n2+1}_{2,1}).$ Hence, there exists 
 a sequence $(\eps_k)_{k\in\N}$ going
 to $0$ so that  $j_0^{\eps_k}\rightharpoonup j_0$ in the sense of distributions. 
 Moreover, we have 
  $$
 j_0^\eps-b^\eps=-\cL^{-1}n^{-1}\div\vc j_1^\eps-\eps\cL^{-1}\d_tj_0^\eps.
 $$
 Remembering \eqref{eq:l-diff4b} and \eqref{eq:eqass}, we see that the first
 term of the r.h.s. is $\cO(\ep)$ in a suitable space.  The second one also tends to $0$ in the sense
 of distributions as   $\eps\cL^{-1}\to0.$  Hence
 \begin{equation}\label{eq:diff6}
 j_0=b.
 \end{equation}
 In order to pass to the limit in the velocity equation, we proceed as in the non-equilibrium 
 case. First we use \eqref{eq:l-diff4} and next, the fact that 
 $$
\d_t\biggl(\vu^\eps+\frac\eps n\, k_4(b^\eps)\,\vc j_1^\eps\biggr)
=-\vu^\eps\cdot\nabla\vu^\eps+k_2(b^\eps)\cA\vu^\eps-k_3(b^\eps)\nabla b^\eps
+\frac\eps nk'_4(b^\eps)\d_tb^\eps\,\vc j_1^\eps-\frac1n\,k_4(b^\eps)\,\nabla j_0^\eps.
$$
Taking advantage of \eqref{eq:unif-diff2} and of product laws in Besov spaces, 
we readily obtain that the four first terms of the r.h.s. are 
 bounded in $L^2(\R_+;\dot B^{\frac n2-2}_{2,1})$ (or only in  $L^2(\R_+;\dot B^{\frac n2-2}_{2,\infty})$ if $n=2$).
 To handle the last term, we observe that according to \eqref{eq:unif-diff2}, 
 $\nabla j_0^\eps$ is bounded in $L^\infty(\R_+;\dot B^{\frac n2-2}_{2,1})$
 hence  the term $k_4(b^\eps)\nabla j_0^\eps$ is bounded 
in $L^\infty(\R_+;\dot B^{\frac n2-2}_{2,1})$ (or $L^\infty(\R_+;\dot B^{\frac n2-2}_{2,\infty})$ if $n=2$).
\medbreak
As in the non-equilibrium case, it is now easy to conclude that  
there exists some $\vc v$ in $L^\infty(\R_+;\dot B^{\frac n2-1}_{2,1})$ so that 
for all $\phi$ in $\cS$ and $\alpha\in(0,1),$ we have
$$
\phi\Bigl(\vu^{\eps_k}+\frac{\eps^k}n\vc j_1^{\eps_k}\Bigr)\longrightarrow\phi\vc v\quad\hbox{in}\quad L^\infty_{loc}(\R_+;\dot B^{\frac n2-1-\alpha}_{2,1}).
$$
Of course, combining with \eqref{eq:l-diff4b}, this implies that $\vv=\vu,$
and  $(b,\vu)$  thus satisfies the second line of \eqref{eq:eqlim}.
 \medbreak
 Finally, that the whole family $(b^\eps,\vu^\eps,j_0^\eps,\vc j_1^\eps)$ converges to $(b,\vu,b,\vc 0)$ 
 stems from the fact that
 the solution to \eqref{eq:eqlim} is unique (note that it is just the standard barotropic Navier-Stokes equations
 with a \emph{modified} but still stable pressure law).  
 \qed


\section{The Poisson diffusion regime}

This section is devoted to the study of the asymptotics regime where
\begin{equation}\label{eq:poissonsas}
 \ep\ll\cL\lesssim\ep^{1/2}\quad\hbox{and}\quad \cL^2\cL_s\approx1.\end{equation}
 According to the formal computations of Section \ref{s:formal}, we expect the
 solutions of \eqref{eq:NSrad} to tend to those of the Navier-Stokes-Poisson 
 sytem \eqref{eq:eq1}. 
 
 The general scheme of the proof that we here propose is the same as in the study of the other asymptotics:
 we first perform a fine analysis of the linearized equations so as to check the long-time stability 
 and exhibit the quantities that are likely to be bounded uniformly when $\ep\to0,$ 
 then tackle the proof of the global existence. We rapidly justify that the limit system 
 is globally well-posed in a functional framework that is consistent with the analysis we  used
 for   \eqref{eq:NSrad}, and eventually take advantage of compactness 
 arguments so as to prove the expected convergence result. 
 As in the other  regimes, the fact that the limit system has a unique solution will 
 guarantee that the whole family of solutions to \eqref{eq:NSrad} converges to 
 the solution to \eqref{eq:eq1}.  
 
 \subsection{Linear analysis of \eqref{eq:NSrad} in the Poisson regime \eqref{eq:poissonsas}}
We here gather the estimates  for \eqref{eq:diff0} that have been obtained in Section \ref{s:linear}.
Recall that $\wt\cL:=\nu\cL.$ 
\subsubsection*{Small frequencies}
Using \eqref{eq:diffulf2}, \eqref{eq:diffulf9}, \eqref{eq:diffulf10}  and the fact that 
 $|(\wh b,\wh d,\wh j_0,\wh j_1)|\approx|(\wh\fb,\wh\fd,\wh\fj_0,\wh\fj_1)|$ 
 and that the last term in the original definition of $\wh\fj_0$  in \eqref{eq:chgdiffu2}
 has a negligible contribution with respect to $\wh\fj_1,$ we get 
\begin{multline}\label{eq:noneqPlf}
|(\wh b,\wh d,\frac\ep{\wt\cL}\wh j_0,\rho\wh j_0,\wh j_1)(t)|+\rho^2\int_0^t|(\wh b,\wh d,\wh j_0,\wh j_1)|\,d\tau
+\int_0^t|\wh\fj_0|\,d\tau+\frac{\wt\cL}\ep\int_0^t|\rho\wh\zeta_0|\\+\frac{\wt\cL\cM}{\ep}\int_0^t|\wh\fj_1|\,d\tau
\leq C|(\wh b,\wh d,\frac\ep{\wt\cL}\wh j_0,\rho\wh j_0,\wh j_1)(0)|\quad\hbox{for all }\ 0\leq\rho\leq C_1,
\end{multline}
with $\:\wh\fj_0:=\wh j_0-\sqrt n\,\wh b-\sqrt n\frac\ep{\wt\cL}\rho\wh d,$
$\:\wh\zeta_0:=\wh j_0-\frac{\sqrt n}{1+\frac{\rho^2}{n\wt\cL^2\cM}}\wh b\,$ and 
$\:\wh\fj_1:=\wh j_1-\frac\rho{\sqrt n\,\wt\cL\cM}\wh j_0+\frac{\rho\wh b}{\wt\cL\cL_s\cM}\cdotp$

\subsubsection*{Middle frequencies}

Combining \eqref{eq:diff-hf7} and the definition of $\wh\zeta_1$ versus that of $\wh\fj_1,$ we get
\begin{multline}\label{eq:noneqPmf}
|(\rho\wh b,\wh d,\wh j_0,\wh j_1)(t)|+\rho\int_0^t|\wh b|\,d\tau+\rho^2\Int_0^t|\wh d|\,d\tau
+\rho^2\frac{\wt\cL}\ep\Int_0^t|\wh j_0|\,d\tau\\+\frac{\wt\cL\cM}\ep\int_0^t|\wh\fj_1|\,d\tau
\lesssim  |(\rho\wh b,\wh d,\wh j_0,\wh j_1)(0)|
\quad\hbox{for }\ C_1\leq\rho\leq c\wt\cL\cM.
\end{multline}

\subsubsection*{Large frequencies}

Finally, using \eqref{eq:hfdiff3a}, we have
\begin{multline}\label{eq:noneqPhf}
|(\rho\wh b,\wh d,\wh j_0,\wh j_1)(t)|+\rho^2\int_0^t|\wh d|\,d\tau+\rho\int_0^t|\wh b|\,d\tau
+\frac{\wt\cL\cM}\ep\int_0^t|(\wh j_0,\wh j_1)|\,d\tau\\
\leq C|(\rho\wh b,\wh d,\wh j_0,\wh j_1)(0)|\quad\hbox{for  }\ \rho\geq c\wt\cL\cM.
\end{multline}

Therefore, localizing  \eqref{eq:diff} (with nonzero source terms $f$ and $\vc g$) according to Littlewood-Paley operator $\ddk,$ 
using \eqref{eq:Pu},  following the computations leading to the above three inequalities and using Fourier-Plancherel theorem, 
we end up with the following inequality for all $s\in\R$
\begin{multline}\label{eq:noneqPestlin}
\|(\vu,\vc j_1)(t)\|_{\dot B^s_{2,1}} +\|b(t)\|_{\dot B^s_{2,1}}^{\ell,1}+\|b(t)\|_{\dot B^{s+1}_{2,1}}^{h,1}
+\frac\ep{\wt\cL}\|j_0(t)\|_{\dot B^s_{2,1}}^{\ell,1}+\|j_0(t)\|_{\dot B^{s+1}_{2,1}}^{\ell,1}+\|j_0(t)\|_{\dot B^s_{2,1}}^{h,1}\\
+\int_0^t\|\vu\|_{\dot B^{s+2}_{2,1}}\,d\tau
+\int_0^t\|(b,j_0,\vc j_1)\|_{\dot B^{s+2}_{2,1}}^{\ell,1}\,d\tau
+\frac{\wt\cL}\ep\int_0^t\|\zeta_0\|_{\dot B^{s+1}_{2,1}}^{\ell,1}\,d\tau
+\int_0^t\|\fj_0\|_{\dot B^s_{2,1}}^{\ell,1}\,d\tau\\
+\frac{\wt\cL\cM}\ep\int_0^t\|\vc\fj_1\|_{\dot B^s_{2,1}}^{\ell,\wt\cL\cM}\,d\tau
+\frac{\wt\cL}\ep\int_0^t\|j_0\|_{\dot B^{s+2}_{2,1}}^{m,1,\wt\cL\cM}
+\int_0^t\|b\|^{h,1}_{\dot B^{s+1}_{2,1}}\,d\tau
+\frac{\wt\cL\cM}\ep\int_0^t\|(j_0,\vc j_1)\|_{\dot B^s_{2,1}}^{h,\wt\cL\cM}\,d\tau\\
\lesssim \|(\vu,\vc j_1)(0)\|_{\dot B^s_{2,1}} +\|b(0)\|_{\dot B^s_{2,1}}^{\ell,1}+\|b(0)\|_{\dot B^{s+1}_{2,1}}^{h,1}
+\frac\ep{\wt\cL}\|j_0(0)\|_{\dot B^s_{2,1}}^{\ell,1}+\|j_0(0)\|_{\dot B^{s+1}_{2,1}}^{\ell,1}\\
+\|j_0(0)\|_{\dot B^s_{2,1}}^{h,1}
+\int_0^t\bigl(\|f\|_{\dot B^s_{2,1}}^{\ell,1}+\|f\|_{\dot B^{s+1}_{2,1}}^{h,1}+\|\vc g\|_{\dot B^s_{2,1}}\bigr)\,d\tau,
\end{multline}
with  $\fj_0:=j_0-\sqrt n\,b-\sqrt n\,\frac\ep{\wt\cL}\,\div\vu,$
$$
\zeta_0:=j_0-\sqrt n\Bigl({\rm Id}-\frac1{n\wt\cL^2\cM}\Delta\Bigr)^{-1}b\quad\hbox{and}\quad 
\vc \fj_1:=\vc j_1+\frac1{\sqrt n\,\wt\cL\cM}\nabla j_0-\frac{1}{\wt\cL\cL_s\cM}\nabla b.$$

As in the previous sections, owing to the convection term in the equation for $b,$
the above inequality does not allow to prove the global existence 
for \eqref{eq:NSrad}, and one has to consider the paralinearized system \eqref{eq:par-diff}. 
Adapting the proof of Proposition \ref{p:para-diff}, we get the following
\begin{Proposition}\label{p:para-diff-poisson}  If the coefficients $\cL,$ $\cL_s$ and $\ep$ fulfill 
\eqref{eq:poissonsas} then for any smooth solution $(b,\vu,j_0,\vc j_1)$ 
to \eqref{eq:par-diff}, one has the following inequality
$$
\displaylines{\|(b,\vu,j_0,\vc j_1)(t)\|_{\check Y^\nu_\ep(t)}
\leq  C\biggl(\|(b,\vu,j_0,\vc j_1)(0)\|_{\check X^\nu_\eps}
+\int_0^t\|\nabla\vv\|_{L^\infty}\|(b,\vu,j_0,\vc j_1)\|_{\check X^\nu_\eps}\,d\tau\hfill\cr\hfill
+\int_0^t\|(\nabla F,\vc G)\|_{\dot B^{\frac n2-1}_{2,1}}^{h,1}\,d\tau
+\int_0^t\|(F-T_{\vv}\cdot\nabla b,\vc G-T_{\vv}\cdot\nabla\vu)\|_{\dot B^{\frac n2-1}_{2,1}}^{\ell,1}\,d\tau\biggr),}
$$
with $$\|(b,\vu,j_0,\vc j_1)\|_{\check X^\nu_\eps}:=\|(\vu,\vc j_1)\|_{\dot B^{\frac n2-1}_{2,1}}
 +\|b\|_{\dot B^{\frac n2-1}_{2,1}}^{\ell,\nu^{-1}}+\nu\|b\|_{\dot B^{\frac n2}_{2,1}}^{h,\nu^{-1}}
+\frac\ep{\cL\nu}\|j_0\|_{\dot B^{\frac n2-1}_{2,1}}^{\ell,\nu^{-1}}+
\nu\|j_0\|_{\dot B^{\frac n2}_{2,1}}^{\ell,\nu^{-1}}+\|j_0\|_{\dot B^{\frac n2-1}_{2,1}}^{h,\nu^{-1}},$$
and 
$$\displaylines{
\|(b,\vu,j_0,\vc j_1)(t)\|_{\check Y^\nu_\ep(t)}
:=\sup_{0\leq\tau\leq t}\|(b,\vu,j_0,\vc j_1)(\tau)\|_{\check X^\nu_\eps}\hfill\cr\hfill+\nu\int_0^t\bigl(\|\vu\|_{\dot B^{\frac n2+1}_{2,1}}
+\|(b,\vc j_1)\|_{\dot B^{\frac n2+1}_{2,1}}^{\ell,\nu^{-1}}
+{\textstyle\frac{\cL\nu}\ep}\|j_0\|_{\dot B^{\frac n2+1}_{2,1}}^{m,\nu^{-1},\cL\cM}\bigr)\,d\tau
+\frac\cL\ep\int_0^t\|\zeta_0\|_{\dot B^{\frac n2}_{2,1}}^{\ell,\nu^{-1}}\,d\tau\hfill\cr\hfill
+\nu^{-1}\int_0^t\|\fj_0\|_{\dot B^{\frac n2-1}_{2,1}}^{\ell,\nu^{-1}}\,d\tau
+\frac{\cL\cM}\ep\int_0^t\|\vc\fj_1\|_{\dot B^{\frac n2-1}_{2,1}}^{\ell,\cL\cM}\,d\tau
+\int_0^t\|b\|^{h,1}_{\dot B^{\frac n2}_{2,1}}\,d\tau
+\frac{\cL\cM}\ep\int_0^t\|(j_0,\vc j_1)\|_{\dot B^{\frac n2-1}_{2,1}}^{h,\cL\cM}\,d\tau.}
$$
Above, we set $$\fj_0:=j_0-\,b-\frac\ep\cL\div\vu,\quad
\zeta_0:=j_0-\biggl({\rm Id}-\frac1{n\cL^2\cM}\Delta\biggr)^{-1}b
\!\quad\hbox{and}\quad\! 
\vc \fj_1:=\vc j_1+\frac1{\cL\cM}\nabla j_0-\frac{1}{\cL\cL_s\cM}\nabla b.$$
\end{Proposition}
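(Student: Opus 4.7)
The plan is to mirror the strategy used for Propositions \ref{p:para-diff} and \ref{p:para-diff2}, namely to localize \eqref{eq:par-diff} by applying $\ddk$ and then to perform, in each of the three frequency regimes (low: $2^k\leq C_1$; medium: $C_1\leq 2^k\leq c\wt\cL\cM$; high: $2^k\geq c\wt\cL\cM$), exactly the energy argument that yielded the corresponding linear estimates \eqref{eq:noneqPlf}, \eqref{eq:noneqPmf}, \eqref{eq:noneqPhf}, with the para-convection contributions treated as source terms. After multiplication by $2^{k(\frac n2-1)}$ and summation over $k$, the Fourier-Plancherel identity and the $\dot B^s_{2,1}$ definition give all the norms making up $\check X^\nu_\eps$ and $\check Y^\nu_\eps$. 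Since the change of variables \eqref{eq:nu} reduces the problem to $\nu=1$ (with $\cL$ replaced by $\wt\cL=\nu\cL$), we may work throughout with $\nu=1$ and then undo the scaling.

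For low frequencies I would transfer $T_\vv\cdot\nabla b$ and $T_\vv\cdot\nabla \vu$ to the right-hand side and repeat the block-diagonalization that produced \eqref{eq:noneqPlf}, introducing the same auxiliary unknowns $\ddk\fj_0$, $\ddk\zeta_0$, $\ddk\vc\fj_1$ localized in frequency. The estimate thus obtained is exactly \eqref{eq:noneqPlf} with $\ddk F-\ddk(T_\vv\cdot\nabla b)$ and $\ddk\vc G-\ddk(T_\vv\cdot\nabla \vu)$ in the right-hand side; after multiplying by $2^{k(\frac n2-1)}$ and summing over $2^k\leq C_1$ one recovers precisely the low-frequency pieces of $\check Y^\nu_\eps$ bounded by the corresponding low-frequency pieces of $\check X^\nu_\eps$ and by $\int_0^t\|(F-T_\vv\cdot\nabla b,\vc G-T_\vv\cdot\nabla\vu)\|_{\dot B^{\frac n2-1}_{2,1}}^{\ell,1}d\tau$.

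The medium-frequency regime is the main obstacle, because (as already seen for \eqref{eq:diff-hf7}) the fluid/radiative coupling is of the same size as the fluid dissipation, and one must therefore derive the estimate by computing the Lyapunov functional \eqref{eq:lya1} on $(\ddk b,\ddk\vu)$ (which keeps the para-convection terms in fluid unknowns only and hence is amenable to Lemma 4.1 of \cite{DD}), and in parallel handling $(\ddk j_0,\ddk\vc j_1)$ through the substitutions used in \eqref{eq:diff-mf} and the variant of \eqref{eq:diffulf9} valid when $\cL\lesssim \ep^{1/2}$. The bookkeeping must be done so that \emph{no} radiative unknown appears in the para-convection commutators; this is exactly the point stressed in the proof of Proposition \ref{p:para-diff} (``combinations between fluid unknowns on one side, and radiative unknowns on the other side''). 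Assembling these block estimates yields the analogue of \eqref{eq:noneqPmf} at the level of $\ddk$, with the additional harmless commutator term $\sum_{k'\sim k}\int_0^t\|\nabla\vv\|_{L^\infty}\|\dot\Delta_{k'}(\nabla b,\vu)\|_{L^2}d\tau$ arising from Lemma 4.1 of \cite{DD}. Summation over $C_1\leq 2^k\leq c\wt\cL\cM$ with the weight $2^{k(\frac n2-1)}$ then produces the medium-frequency ingredients of $\check Y^\nu_\eps$.

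For high frequencies, the system essentially decouples: one computes the classical Lyapunov quantity $\cU_\rho^2$ for $(\ddk b,\ddk\vu)$ as in \eqref{eq:diff-hf1}, and, separately, the perturbed energy $\|\ddk j_0\|_{L^2}^2+\|\ddk\vc j_1\|_{L^2}^2-\kappa\wt\cL\cM 2^{-2k}(\ddk j_0\,|\,\ddk\div\vc j_1)_{L^2}$ for the radiative unknowns, reproducing the argument that led to \eqref{eq:hfdiff3a}. Lemma 4.1 of \cite{BCD-style}, as invoked in \cite{DD}, controls the para-convection commutators by $\sum_{k'\sim k}\|\nabla\vv\|_{L^\infty}\|\dot\Delta_{k'}(\nabla b,\vu)\|_{L^2}$. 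Putting these three regimes together, multiplying by $2^{k(\frac n2-1)}$ and summing, gives the asserted inequality for $\|(b,\vu,j_0,\vc j_1)\|_{\check Y^\nu_\ep(t)}$. The structural reason the argument closes is precisely that in the Poisson regime $\eps\ll\cL\lesssim\eps^{1/2}$ and $\cL^2\cL_s\approx 1$ one has \eqref{eq:diffulf10} at disposal, which is what forces the unusual weights $\ep/(\cL\nu)$ and $\nu$ in front of the low-frequency pieces of $j_0$ in the definition of $\check X^\nu_\eps$.
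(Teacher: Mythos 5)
Your proposal follows exactly the route the paper intends: the paper itself gives no explicit proof for this proposition, merely stating that one obtains it by ``adapting the proof of Proposition \ref{p:para-diff}'', and your write-up is precisely that adaptation — localize \eqref{eq:par-diff} with $\ddk$, treat the para-convection terms as source terms or via Lemma 4.1 of \cite{DD} according to whether one is in the low, medium, or high frequency block, invoke respectively \eqref{eq:noneqPlf}, \eqref{eq:noneqPmf}, \eqref{eq:noneqPhf}, then multiply by $2^{k(\frac n2-1)}$ and sum. Two small remarks: (i) \eqref{eq:diffulf9} pertains to the range $c\leq\rho\leq C_1$, which sits at the upper end of the ``low'' block \eqref{eq:noneqPlf}, not to the medium block $C_1\leq2^k\leq c\wt\cL\cM$ where \eqref{eq:diff-hf7} alone is used — the attribution in your medium-frequency paragraph is slightly misplaced though harmless; (ii) you correctly identify that \eqref{eq:diffulf10} (valid only when $\cL\lesssim\ep^{1/2}$) is what forces the weights $\ep/(\cL\nu)$ and $\nu$ in front of the two low-frequency pieces of $j_0$ in $\check X^\nu_\eps$, which is the genuinely new ingredient of the Poisson regime.
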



\subsection{Uniform global well-posedness in the Poisson regime}

In this paragraph, we sketch  the proof of the following global
existence result.
\begin{Theorem}\label{th:diff3}
There exists a positive constant $\eta$ depending only on $\mu/\nu,$ $n$ 
and on the pressure law such that if $\eps\in(0,1)$ and  if the coefficients
$\cL$ and $\cL_s$ fulfill \eqref{eq:poissonsas}  then any 
data $(b_0^\eps,\vu_0^\eps,j_{0,0}^\eps,\vc j_{1,0}^\eps)$ satisfying 
\begin{equation}\label{eq:small-diff3}
\|(b_0^\eps,\vu_0^\eps,j_{0,0}^\eps,\vc j_{1,0}^\eps)\|_{\check X_\eps^\nu}\leq \eta\nu,
\end{equation}
generates a unique global solution  $(b^\eps,\vu^\eps,j_0^\eps,\vc j_1^\eps)$ in $\check Y_\eps^\nu$
to  System  \eqref{eq:NSrad}.

Furthermore, we have
\begin{equation}\label{eq:unif-diff3}
\|(b^\eps,\vu^\eps,j_0^\eps,\vc j_1^\eps)\|_{\check Y_\eps^\nu}\leq C
\|(b_0^\eps,\vu_0^\eps,j_{0,0}^\eps,\vc j_{1,0}^\eps)\|_{\check X_\eps^\nu}.
\end{equation}
\end{Theorem}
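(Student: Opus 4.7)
\bProof
The overall strategy mirrors the proofs of Theorems \ref{Tdiff1} and \ref{Tdiff2}. By the scaling \eqref{eq:nu} we may assume $\nu=1$ and replace $\cL$ by $\wt\cL:=\nu\cL.$ Existence of a smooth local solution follows from the standard spectral truncation / Friedrichs scheme used in \cite{DD,BCD}, and uniqueness can be obtained as in the end of Theorem \ref{Tdiff1}; thus the heart of the matter is to establish the global a priori bound \eqref{eq:unif-diff3}. To this end, denote $U_0:=\|(b_0,\vu_0,j_{0,0},\vc j_{1,0})\|_{\check X^1_\eps}$ and $U(t):=\|(b,\vu,j_0,\vc j_1)\|_{\check Y^1_\eps(t)},$ and apply Proposition \ref{Ppara-diff-poisson} with
$\vv=\vu,$
$F:=-T'_{\nabla b}\cdot\vu-k_1(b)\div\vu$ and
$$\vc G:=-T'_{\nabla\vu}\cdot\vu+k_2(b)\wt\cA\vu-k_3(b)\nabla b+\frac{\wt\cL\cM}{n}\,k_4(b)\vc j_1.$$

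The paraproduct/remainder bounds and composition estimates used in Theorem \ref{Tdiff1} give at once
$\|F-T_{\vu}\cdot\nabla b\|_{\dot B^{\frac n2-1}_{2,1}}^{\ell,1}\lesssim \|b\|_{\dot B^{\frac n2}_{2,1}}\|\vu\|_{\dot B^{\frac n2}_{2,1}},$
$\|\nabla F\|_{\dot B^{\frac n2-1}_{2,1}}^{h,1}\lesssim \|\nabla b\|_{\dot B^{\frac n2-1}_{2,1}}\|\vu\|_{\dot B^{\frac n2+1}_{2,1}} +\|b\|_{\dot B^{\frac n2}_{2,1}}\|\div\vu\|_{\dot B^{\frac n2}_{2,1}},$
and analogous bounds for the first three terms of $\vc G.$ After time integration and Cauchy–Schwarz, each of these is controlled by $CU^2(t).$ The only nontrivial contribution is the radiative term $\frac{\wt\cL\cM}{n}k_4(b)\vc j_1,$ because the low frequencies of $\vc j_1$ are \emph{not} integrable in $L^1(\R_+;\dot B^{\frac n2-1}_{2,1})$ with factor $\wt\cL\cM.$ To bypass this, exactly as in the proof of Theorem \ref{Tdiff1}, I would split
$$
\wt\cL\cM\,\vc j_1=\wt\cL\cM\,\vc j_1^{h,\wt\cL\cM}+\wt\cL\cM\,\vc\fj_1^{\ell,\wt\cL\cM}+\cL_s^{-1}\nabla b^{\ell,\wt\cL\cM}-n^{-1/2}\nabla j_0^{\ell,\wt\cL\cM},
$$
and use that $\|\vc j_1\|_{L^1_t(\dot B^{\frac n2-1}_{2,1})}^{h,\wt\cL\cM}$ and $\|\vc\fj_1\|_{L^1_t(\dot B^{\frac n2-1}_{2,1})}^{\ell,\wt\cL\cM}$ carry the prefactor $\frac{\wt\cL\cM}\ep$ in $\check Y^1_\eps,$ and therefore absorb the $\wt\cL\cM$ coefficient (observe that $\ep\leq 1$). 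As for $\nabla j_0^{\ell,\wt\cL\cM}$ and $\nabla b^{\ell,\wt\cL\cM},$ they are bounded in $L^1(\R_+;\dot B^{\frac n2-1}_{2,1}),$ and thus, by product laws,
$$
\wt\cL\cM\,\|k_4(b)\vc j_1\|_{L^1_t(\dot B^{\frac n2-1}_{2,1})}\lesssim \|b\|_{L^\infty_t(\dot B^{\frac n2}_{2,1})}\,U(t)\lesssim U^2(t).
$$

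Putting everything together yields $U(t)\leq C\bigl(U_0+U^2(t)\bigr)$ for all $t\geq 0.$ A standard bootstrap/continuation argument (noting that $\dot B^{\frac n2}_{2,1}\hookrightarrow L^\infty$ ensures $|b|\leq\frac12$ as long as $U$ remains small, and hence the functions $k_i(b)$ and the composition estimates are licit) then gives $U(t)\leq 2CU_0$ for all $t\geq 0,$ provided $\eta$ is chosen small enough to guarantee $4C^2U_0\leq 1.$ The main obstacle is precisely the low-frequency handling of the $\wt\cL\cM\,k_4(b)\vc j_1$ term above: one has to verify that the decomposition is compatible with the weights in the definition of $\check X^\nu_\eps$ and $\check Y^\nu_\eps,$ in particular that the new $j_0$ weights ($\frac\ep{\wt\cL}$ at low frequencies, $\dot B^{\frac n2}_{2,1}$ below $\nu^{-1},$ and the middle-frequency term with prefactor $\frac{\wt\cL}\ep$) play nicely with the splitting, and that the condition $\cL^2\cL_s\approx 1$ (through $\cL_s^{-1}$ in the decomposition) does not produce any diverging coefficient when $\ep\to0.$ Once this is checked, the argument closes as in Theorems \ref{Tdiff1} and \ref{Tdiff2}, and uniqueness on $\R_+$ is obtained by applying to the difference of two solutions the same transport+heat analysis as in Theorem \ref{Tnoneqlim1} (using Osgood's lemma in dimension $n=2$).
\qed
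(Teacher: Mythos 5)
Your proposal follows the paper's proof almost line-for-line: same reduction to $\nu=1$, same choice of $\vv,$ $F,$ $\vc G,$ the same decomposition of the radiative source term $\frac{\cL\cM}{n}k_4(b)\vc j_1,$ and the same bootstrap/continuation closure. The point that you explicitly flag as the ``main obstacle'' -- checking that the splitting is compatible with the weights in $\check X^\nu_\eps, \check Y^\nu_\eps$ -- is precisely the only content that distinguishes this proof from Theorems~\ref{th:diff1} and~\ref{th:diff2}, and the paper addresses it by proving the key estimate \eqref{eq:poisson3}, namely that
$\|\nabla j_0\|^{\ell,\cL\cM}_{L^1(\dot B^{n/2}_{2,1})}\lesssim\|(b,\vu,j_0,\vc j_1)\|_{\check Y^1_\ep}.$
You assert a slightly weaker-looking bound ($\dot B^{n/2-1}_{2,1}$ instead of $\dot B^{n/2}_{2,1}$) and then leave the verification to the reader, which is exactly the gap that must be filled. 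To close it: $\|\nabla j_0^{\ell,\cL\cM}\|_{\dot B^{n/2}_{2,1}}\approx\|j_0\|^{\ell,\cL\cM}_{\dot B^{n/2+1}_{2,1}},$ which is split over $2^k\leq 1$ and $1\leq 2^k\leq\cL\cM.$ In the low range one writes $j_0=\zeta_0+({\rm Id}-\frac{\Delta}{n\cL^2\cM})^{-1}b$ and uses that $\check Y^1_\eps$ controls $\frac\cL\ep\|\zeta_0\|^{\ell,1}_{\dot B^{n/2}_{2,1}}$ and $\|b\|^{\ell,1}_{\dot B^{n/2+1}_{2,1}}$ in $L^1_t,$ both with favourable prefactors since $\cL/\ep\gg1$ and the Fourier multiplier is uniformly bounded (here $\cL^2\cM\approx1$ from~\eqref{eq:poissonsas}). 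In the middle range the definition of $\check Y^1_\eps$ gives $\frac\cL\ep\|j_0\|^{m,1,\cL\cM}_{\dot B^{n/2+1}_{2,1}}$ directly. The product is then controlled via $\dot B^{n/2}_{2,1}\times\dot B^{n/2}_{2,1}\to\dot B^{n/2}_{2,1}$ with $k_4(b)\in L^\infty_t(\dot B^{n/2}_{2,1}).$ One final minor correction: in your splitting of $\wt\cL\cM\,\vc j_1$ the term $-n^{-1/2}\nabla j_0^{\ell,\wt\cL\cM}$ should read $-\nabla j_0^{\ell,\wt\cL\cM},$ since the $\vc\fj_1$ used in Proposition~\ref{p:para-diff-poisson} is $\vc j_1+\frac1{\cL\cM}\nabla j_0-\frac1{\cL\cL_s\cM}\nabla b,$ without the $\sqrt n$ normalisation; the $n^{-1/2}$ factor comes from the $\vc\fj_1$ used in the proof of Theorem~\ref{th:diff1} and should not be carried over. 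Neither of these affects the overall strategy, which is correct.
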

\begin{proof}
 Assuming with no loss of generality that $\nu=1,$ the proof relies on Proposition \ref{p:para-diff-poisson} 
 with, dropping the indices $\ep$ for better readability, $\vv=\vu$  
$$
F:=-T'_{\nabla b}\cdot\vu-k_1(b)\div\vu\quad\hbox{and}\quad
\vc G:=-T'_{\nabla\vu}\cdot\vu+k_2(b)\wt\cA\vu-k_3(b)\nabla b+\frac{\cL\cM}nk_4(b)\vc j_1.
$$ 
Let us just explain how to handle the last term, as it cannot be bounded exactly as 
in the proof of Theorems \ref{th:diff1} or \ref{th:diff2} due to the difference between the spaces $\check Y_\eps^\nu$
and $\wt Y_\eps^\nu.$
We use the fact that 
$$
\cL\cM\vc j_1=\cL\cM\vc\fj_1+\cL_s^{-1}\nabla b-\nabla j_0,
$$
and thus 
$$ 
{\cL\cM}k_4(b)\vc j_1= \cL\cM k_4(b)(\vc j_1^{h,\cL\cM}+\vc\fj_1^{\ell,\cL\cM})
+\cL_s^{-1} k_4(b)\nabla b^{\ell,\cL\cM}-k_4(b)\nabla j_0^{\ell,\cL\cM}.
$$
It is clear that
$$
\| k_4(b)\vc j_1^{h,\cL\cM}\|_{L^1(\dot B^{\frac n2-1}_{2,1})}\lesssim
\|b\|_{L^\infty(\dot B^{\frac n2}_{2,1})}\|\vc j_1^{h,\cL\cM}\|_{L^1(\dot B^{\frac n2-1}_{2,1})}
\lesssim(\cL\cM)^{-1}\|(b,\vu,j_0,\vc j_1)\|_{\check Y_\ep^1},
$$
that the second term in the r.h.s. may be bounded in the same way,
and that the third one can   be bounded as the pressure term $k_3(b)\nabla b.$
For the last term, one just has to observe that the definition of $\|\cdot\|_{\check Y_\ep^1}$
guarantees that 
\begin{equation}\label{eq:poisson3}
\|\nabla j_0\|^{\ell,\cL\cM}_{L^1(\dot B^{\frac n2}_{2,1})}\lesssim \|(b,\vu,j_0,\vc j_1)\|_{\check Y_\ep^1},
\end{equation}
and that 
$$
\|k_4(b)\|_{L^\infty(\dot B^{\frac n2}_{2,1})}\lesssim \|b\|_{L^\infty(\dot B^{\frac n2}_{2,1})}.
$$
The rest of the proof is standard, and thus left to the reader.
\end{proof}

 
 \subsection{Study of the limit system}

We introduce the following norms
$$
\|(b,\vu,j_0)\|_{\check\cX^\nu}:=
\|b\|_{\dot B^{\frac n2-1}_{2,1}}^{\ell,\nu^{-1}}
+\nu\|b\|_{\dot B^{\frac n2}_{2,1}}^{h,\nu^{-1}}
+\|\vu\|_{\dot B^{\frac n2-1}_{2,1}}+\|j_0\|_{\dot B^{\frac n2-1}_{2,1}}^{\ell,\nu^{-1}}
+\nu^3\|j_0\|_{\dot B^{\frac n2+2}_{2,1}}^{h,\nu^{-1}},
$$
and
$$\displaylines{
\|(b,\vu,j_0)\|_{\check\cY^\nu}:=\sup_{t\geq0}\|(b,\vu,j_0)(t)\|_{\check\cX^\nu}
\hfill\cr\hfill+\nu\int_{\R_+}\bigl(\|(b,j_0)\|_{\dot B^{\frac n2+1}_{2,1}}^{\ell,\nu^{-1}}
+\|\vu\|_{\dot B^{\frac n2+1}_{2,1}}\bigr)\,dt
+\int_{\R_+}\bigl(\|b\|_{\dot B^{\frac n2}_{2,1}}^{h,\nu^{-1}}
+\nu^2\|j_0\|_{\dot B^{\frac n2+2}_{2,1}}^{h,\nu^{-1}}\bigr)\,dt.}
$$
 \begin{Theorem}\label{th:poisson}
 Let the data $(b_0,\vu_0,j_{0,0})$ satisfy for a small enough constant $c>0$
 \begin{equation}\label{eq:poisson1}
  \|b_0\|_{\dot B^{\frac n2-1}_{2,1}}^{\ell,\nu^{-1}}
+\nu\|b_0\|_{\dot B^{\frac n2}_{2,1}}^{h,\nu^{-1}}
+\|\vu_0\|_{\dot B^{\frac n2-1}_{2,1}}\leq c\nu,
\end{equation} 
and the compatibility condition
$$
j_{0,0}-\frac{\nu^2}{nm}\Delta j_{0,0}=b_0.
$$
Then System \eqref{eq:eq1} admits a unique global solution $(b,\vu,j_0)$ in the space
$\check\cY^\nu,$ satisfying in addition for a large enough constant $C$ independent of $\nu$
\begin{equation}\label{eq:poisson2}
\|(b,\vu,j_0)\|_{\check\cY^\nu}\leq C\bigl(  \|b_0\|_{\dot B^{\frac n2-1}_{2,1}}^{\ell,\nu^{-1}}
+\nu\|b_0\|_{\dot B^{\frac n2}_{2,1}}^{h,\nu^{-1}}
+\|\vu_0\|_{\dot B^{\frac n2-1}_{2,1}}\bigr).
\end{equation}
 \end{Theorem}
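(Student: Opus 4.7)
The plan is to follow closely the blueprint of Theorem~\ref{th:noneqlim1}, adapted to the present Poisson-type limit system. The first step is to reduce to the case $\nu=1$ via the scaling $(b,\vu,j_0)(t,x):=(\wt b,\wt \vu,\wt j_0)(\nu^{-1}t,\nu^{-1}x)$, under which \eqref{eq:eq1} becomes the same system with $\nu=1$ and $\cA$ replaced by $\cA/\nu$, and under which both $\check\cX^\nu$ and $\check\cY^\nu$ scale homogeneously by the factor $\nu^{-1}$. Hence one may assume $\nu=1$ and seek constants depending only on $m$, $n$, $\mu/\nu$ and the pressure law.

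The key structural observation is that the third equation of \eqref{eq:eq1} determines $j_0$ algebraically from $b$: setting $\sigma(\xi):=mn/(|\xi|^2+mn)$, the compatibility condition reads $j_{0,0}=\sigma(D) b_0$, and if $(b,\vu)$ satisfies the first two equations with $j_0:=\sigma(D) b$, then $-\Delta j_0+mn(j_0-b)=0$ is propagated identically in time. One may therefore eliminate $j_0$ and work with the closed modified barotropic Navier--Stokes system
\begin{equation*}
\left\{\begin{array}{l}
\partial_t b+\vu\cdot\nabla b+(1+k_1(b))\div\vu=0,\\[1ex]
\partial_t\vu+\vu\cdot\nabla\vu-(1+k_2(b))\cA\vu+(1+k_3(b))\nabla b+\tfrac1n(1+k_4(b))\nabla\sigma(D) b=\vec 0.
\end{array}\right.
\end{equation*}
Since $\sigma(\xi)=1+\cO(|\xi|^2)$ near zero and $\sigma(\xi)=\cO(|\xi|^{-2})$ at infinity, the extra term $\frac1n\nabla\sigma(D) b$ acts at low frequencies as a stable perturbation of the pressure (effective coefficient $1+1/n$) and is a Fourier multiplier smoothing of order two at high frequencies. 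In particular, the linearization is stable by the same mechanism as in Theorem~\ref{th:noneqlim1}, and the bounds on $j_0$ appearing in $\check\cX^1$ and $\check\cY^1$ are equivalent, via $j_0=\sigma(D) b$, to bounds on $b$: the two norms coincide at low frequency, while at high frequency the two-derivative gain enforced by $\sigma(D)$ is exactly what transforms $\nu\|b\|_{\dot B^{n/2}_{2,1}}^{h,\nu^{-1}}$ into $\nu^3\|j_0\|_{\dot B^{n/2+2}_{2,1}}^{h,\nu^{-1}}$ after re-inserting $\nu$.

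Second, one derives uniform-in-time linear estimates for the $(b,\vu)$-system. Low frequency estimates follow from the barotropic method of Appendix~\ref{s:B} applied to the modified but still stable pressure $(1+1/n)$, yielding parabolic smoothing on $(b,\vu)$ at the level of $\dot B^{n/2+1}_{2,1}$ in $L^1_t$; high frequencies are treated via the standard Lyapunov functional $\cU_\rho^2:=2|(\wh b,\wh d)|^2-2\rho\Re(\wh b\,\overline{\wh d})+|\rho\wh b|^2$, the extra nonlocal term being of strictly lower order and absorbed. Medium frequencies are treated by a Routh--Hurwitz spectral argument as in Section~\ref{s:linear}. Then, as in Subsection~\ref{ss:para}, one paralinearizes the system, replacing $\vu\cdot\nabla b$ by $T_\vu\cdot\nabla b+(\vu\cdot\nabla b-T_\vu\cdot\nabla b)$ and similarly for $\vu\cdot\nabla\vu$, and derives a proposition analogous to Proposition~\ref{p:para-diff-poisson} adapted to the limit norms. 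The nonlinear terms $k_i(b)\,(\cdot)$ and $k_4(b)\,\nabla\sigma(D) b$ are each bounded by $C\|(b,\vu,j_0)\|_{\check\cY^1(t)}^2$ using composition estimates for $k_i$ (vanishing at $0$), standard paraproduct and remainder laws, and the fact that $\dot B^{n/2}_{2,1}$ is an algebra. A bootstrap argument then closes \eqref{eq:poisson2} under the smallness assumption \eqref{eq:poisson1}.

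Uniqueness is treated exactly as for the standard compressible Navier--Stokes system: one estimates the difference $(\delta b,\delta\vu)$ at one lower level of regularity by combining transport estimates for the mass equation with parabolic estimates for the velocity equation, with logarithmic interpolation and Osgood's lemma in dimension $n=2$ (see \cite{D2}); the nonlocal contribution $\nabla\sigma(D)\delta b$ poses no extra difficulty as $\sigma(D)$ is bounded on all $\dot B^s_{2,1}$. The main obstacle is the careful bookkeeping of the asymmetric low/high frequency regularities of $j_0$ dictated by $\sigma(D)$: when estimating the new nonlinear term $k_4(b)\nabla j_0$ in $L^1_t(\dot B^{n/2-1}_{2,1})$, one must decompose $\nabla j_0=\nabla j_0^{\ell,1}+\nabla j_0^{h,1}$ and use that $\check\cY^1$ controls $\nabla j_0^{\ell,1}$ in $L^1(\R_+;\dot B^{n/2}_{2,1})$ on the one hand, while $\nabla j_0^{h,1}\sim\Delta^{-1}\nabla b^{h,1}$ is two derivatives smoother than required on the other. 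Once this frequency decomposition is applied systematically, the estimates close in the standard way.
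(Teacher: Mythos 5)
Your proposal is correct and follows essentially the same route as the paper: rescale to $\nu=1$, observe that the elliptic constraint gives $j_0=\sigma(D)b$ with $\sigma(\xi)=mn/(mn+|\xi|^2)$, treat the system as a barotropic Navier--Stokes system with the frequency-dependent pressure symbol $a_\rho=1+\frac1n\sigma(\rho)$, paralinearize, and close by bootstrap and the usual uniqueness argument at one lower regularity. The only technical divergence is in the linear analysis. You propose a three-regime split (Appendix~\ref{s:B} at low frequency, the standard barotropic Lyapunov functional $\cU_\rho^2$ at high frequency, and a Routh--Hurwitz argument in the middle); the paper instead modifies the Lyapunov and dissipation functionals to $\cL_\rho^2=2a_\rho|\wh b|^2+2|\wh d|^2+|\rho\wh b|^2-2\Re(\rho\wh b\,\overline{\wh d})$ and $\cH_\rho^2=\rho^2(a_\rho|\wh b|^2+|\wh d|^2)$, which give the exact identity $\frac12\frac{d}{dt}\cL_\rho^2+\cH_\rho^2=0$; since $a_\rho$ is bounded above and below uniformly in $\rho$, this yields the estimate at once for \emph{all} $\rho\geq0$, with no need for a medium-frequency argument (which would in any case be elementary for the reduced $2\times2$ system). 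Both versions give the same bounds; the paper's is slightly shorter and avoids the junction between regimes, but your decomposition is consistent with the strategy used elsewhere in the paper and would work. Your bookkeeping of the high-frequency regularity of $j_0$ ($\nu^3$-weighted $\dot B^{n/2+2}_{2,1}$ norm coming from the two-derivative gain of $\sigma(D)$) and the treatment of $k_4(b)\nabla j_0$ by frequency splitting match the paper's estimates.
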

 \begin{proof}
 We just sketch the proof as it is very similar to the standard one for the barotropic 
 Navier-Stokes equations. As usual, it suffices to treat the case $\nu=1.$ 
 
  The first step is to analyse the linearized system
 \begin{equation}\label{eq:poissonl}
 \left\{\begin{array}{l}
 \d_tb+\div\vu=f,\\[1ex]
 \d_t\vu-\wt\cA\vu+\nabla b+\frac1n\nabla j_0=g,\\[1ex]
\bigl({\rm Id}-\frac{1}{nm}\Delta\bigr) j_0=b.
 \end{array}\right.
 \end{equation}
To this end, we set $d=(-\Delta)^{-1/2}\div\vu$ and observe that in the Fourier space, 
$(\wh b,\wh d)$ fulfills the following ODE if $f=g=0$
$$
\left\{\begin{array}{l}
\d_t\wh b+\rho\wh d=0,\\[1ex]
\d_t\wh d+\rho^2\wh d-\rho a_\rho\wh b=0
\end{array}\right.
$$
with $\rho:=|\xi|$ and $a_\rho:=1+\frac{m}{\rho^2+nm}\cdotp$
Of course, $j_0$ may be computed from $b$ by the relation
$$
\wh j_0=\frac{nm}{\rho^2+nm}\,\wh b.
$$
Introducing the following Lyapunov and diffusion functionals
$$
\cL_\rho^2=2a_\rho|\wh b|^2+2|\wh d|^2+|\rho\wh b|^2-2\Re(\rho\wh b\overline{\wh d})
\quad\hbox{and}\quad
\cH_\rho^2=\rho^2(a_\rho|\wh b|^2+|\wh d|^2),
$$
we see that 
$$
\frac 12\frac d{dt}\cL_\rho^2+\cH^2_\rho=0.
$$
Because we have $a_\rho^{\pm1}\leq c_0$ for some $c_0$ independent of $\rho,$
one can thus conclude exactly as in the standard barotropic case that for all $t\geq0$ 
and $\rho\geq0$
$$
|(\wh b,\rho\wh b,\wh d)(t)|+\min(1,\rho)\int_0^t|\rho\wh b|\,d\tau+\rho^2\int_0^t|\wh d|\,d\tau
\lesssim |(\wh b,\rho\wh b,\wh d)(0)|.
$$
Back to \eqref{eq:poissonl}, one may combine Fourier-Plancherel theorem 
and Duhamel formula to get the following estimate for all $s\in\R$
$$\displaylines{
\|(b,\nabla b,\vu)(t)\|_{\dot B^s_{2,1}}+\|j_0(t)\|_{\dot B^s_{2,1}\cap \dot B^{s+3}_{2,1}}
+\int_0^t\bigl(\|\vu\|_{\dot B^{s+2}_{2,1}}+\|(b,j_0)\|_{\dot B^{s+2}_{2,1}}^{\ell,1}\bigr)\,d\tau
\hfill\cr\hfill+\int_0^t\bigl(\|b\|_{\dot B^{s+1}_{2,1}}^{h,1}+\|j_0\|_{\dot B^{s+3}_{2,1}}^{h,1}\bigr)\,d\tau
\leq C\biggl(\|(b_0,\nabla b_0,\vu_0)\|_{\dot B^s_{2,1}}
+\int_0^t\|(f,\nabla f,g)\|_{\dot B^s_{2,1}}\,d\tau\biggr).}
$$
However, because of the convection term in the equation for $b,$ 
this does not allow to prove estimates for the nonlinear system \eqref{eq:eq1}.
Therefore, mimicking the standard approach  for the compressible Navier-Stokes equation
we `paralinearize' the system and  get the following result
\begin{Proposition}
The solutions to the following paralinearized system
 $$
 \left\{\begin{array}{l}
 \d_tb+T_{\vv}\cdot\nabla\vu+\div\vu=f,\\[1ex]
 \d_t\vu+T_{\vv}\cdot\nabla\vu-\wt\cA\vu+\nabla b+\frac1n\nabla j_0=\vc g,\\[1ex]
\bigl({\rm Id}-\frac{1}{nm}\Delta\bigr) j_0=b.
 \end{array}\right.
 $$
fulfill the following a priori estimate
$$\displaylines{
\|(b,\nabla b,\vu)(t)\|_{\dot B^s_{2,1}}+\|j_0(t)\|_{\dot B^s_{2,1}\cap \dot B^{s+3}_{2,1}}
+\int_0^t\bigl(\|\vu\|_{\dot B^{s+2}_{2,1}}+\|(b,j_0)\|_{\dot B^{s+2}_{2,1}}^{\ell,1}\bigr)\,d\tau
\hfill\cr\hfill+\int_0^t\bigl(\|b\|_{\dot B^{s+1}_{2,1}}^{h,1}+\|j_0\|_{\dot B^{s+3}_{2,1}}^{h,1}\bigr)\,d\tau
\leq C\biggl(\|(b_0,\nabla b_0,\vu_0)\|_{\dot B^s_{2,1}}
+\int_0^t\|(f,\nabla f,g)\|_{\dot B^s_{2,1}}\,d\tau\hfill\cr\hfill
+\int_0^t\|\nabla\vv\|_{L^\infty}\|(b,\nabla b,\vu)\|_{\dot B^s_{2,1}}\,d\tau\biggr).}
$$
\end{Proposition}
Now, in order to estimate the solutions of the nonlinear system \eqref{eq:eq1}, 
it suffices to apply the above proposition with $\vv=\vu$
$$
f=-T'_{\vu}\cdot\nabla b- k_1(b)\div\vu\quad\hbox{and}\quad
\vc g=-T'_{\vu}\cdot\nabla \vu+k_2(b)\wt\cA\vu-k_3(b)\nabla b-n^{-1} k_4(b)\nabla j_0.
$$
All the terms but the last one of $\vc g$ are already present in the barotropic Navier-Stokes equations,
and may be bounded quadratically in terms of $\|(b,\vu,j_0)\|_{\check\cY^1}.$
Now, we have 
$$
\|k_4(b)\nabla j_0\|_{\dot B^{\frac n2-1}_{2,1}}\lesssim \|b\|_{\dot B^{\frac n2}_{2,1}}
\|j_0\|_{\dot B^{\frac n2}_{2,1}},$$
hence
$$
\|k_4(b)\nabla j_0\|_{L^1_t(\dot B^{\frac n2-1}_{2,1})}\lesssim \|b\|_{L^2_t(\dot B^{\frac n2}_{2,1})}
\bigl(\|j_0\|_{L^2_t(\dot B^{\frac n2}_{2,1})}^{\ell,1}+\|j_0\|_{L^2_t(\dot B^{\frac n2+2}_{2,1})}^{h,1}\bigr),
$$
and one can thus conclude that whenever the solution $(b,\vu,j_0)$ exists we have
$$
\|(b,\vu,j_0)\|_{\check\cY^1(t)}\leq C\bigl( \|b_0\|_{\dot B^{\frac n2-1}_{2,1}\cap\dot B^{\frac n2}_{2,1}}
+\|\vu_0\|_{\dot B^{\frac n2-1}_{2,1}}+\|(b,\vu,j_0)\|_{\check\cY^1(t)}^2\bigr),
$$ which allows to get \eqref{eq:poisson2} if \eqref{eq:poisson1} is fulfilled 
with a small enough $c.$
\end{proof}
 

\subsection{Weak convergence}

Here we justify weak convergence to \eqref{eq:eq1}  when  assumption \eqref{eq:poissonsas} is fulfilled
and, in addition
\begin{equation}\label{eq:poissonsas1}
\nu^2\cL^2\cL_s\to m\in(0,+\infty).
\end{equation}
 \begin{Theorem}\label{th:diff-lim3}
 Let the family of data $(b_0^\eps,\vu_0^\eps,j_{0,0}^\eps,\vc j_{1,0}^\eps)_{0<\eps<1}$ satisfy
 \eqref{eq:small-diff3}. 
Then  the global solution  $(b^\eps,\vu^\eps,j_0^\eps,\vc j_1^\eps)$ in $\check Y^\nu_\eps$ given 
by Theorem \ref{th:diff3} satisfies
\begin{equation}\label{eq:poisson4}
\vc j_1^\eps=\cO(\cL)\quad\hbox{in}\quad L^1(\R_+;\dot B^{\frac n2-1}_{2,1}+\dot B^{\frac n2}_{2,1}),
\end{equation}
and, up to  extraction,  $(b^\eps,\vu^\eps,j_0^\eps)$  converges weakly to some solution $(b,\vu,j_0)$ in $\check\cY^\nu$ of System \eqref{eq:eq1}
when $\ep$ goes to $0.$ 
\medbreak
If in addition   \begin{equation}\label{eq:poisson7}
(b^\eps_0,\vu^\eps_0,j_{0,0}^\eps)\rightharpoonup(b_0,\vu_0,j_{0,0})\quad\hbox{with}\quad 
-\nu^2\Delta j_{0,0}+ nm(j_{0,0}-b_0)=0,
\end{equation}
 then the whole family $(b^\ep,\vu^\ep,j_0^\eps)$ converges to the unique solution 
 $(b,\vu,j_0)$ corresponding to the initial data $(b_0,\vu_0,j_{0,0}),$ given by Theorem \ref{th:poisson}. 
 \end{Theorem}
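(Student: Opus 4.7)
Following the three-step strategy of Theorems \ref{th:diff-lim1} and \ref{th:diff-lim2}, the plan is: (i) derive the decay \eqref{eq:poisson4}; (ii) extract weak and locally strong limits of $(b^\eps,\vu^\eps,j_0^\eps)$ from the uniform estimate \eqref{eq:unif-diff3}; (iii) identify the limit system, the crux being the degeneration of the radiative equations into an elliptic equation for $j_0$; and (iv) upgrade subsequential convergence to convergence of the full family via Theorem \ref{th:poisson}. For (i), I would decompose
\[
\vc j_1^\eps = \vc\fj_1^\eps - \frac{1}{\cL\cM}\nabla j_0^\eps + \frac{1}{\cL\cL_s\cM}\nabla b^\eps,
\]
and apply \eqref{eq:unif-diff3}: the low-frequency piece of $\vc\fj_1^\eps$ (and the high-frequency piece of $\vc j_1^\eps$) is $\cO(\eps/(\cL\cM))$ in the relevant $L^1$--Besov norm, while the remaining pieces inherit the uniform bounds on $\nabla j_0^\eps$ and $\nabla b^\eps$ divided by $\cL\cM$ and $\cL\cL_s\cM.$ Since \eqref{eq:poissonsas} forces $\cL^2\cL_s\approx 1,$ each of those coefficients is of order $\cL,$ yielding $\vc j_1^\eps = \cO(\cL)$ in $L^1(\R_+;\dot B^{\frac n2-1}_{2,1}+\dot B^{\frac n2}_{2,1}).$

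For (ii), the bounds \eqref{eq:unif-diff3} put $(b^\eps)$ and $(\vu^\eps)$ in exactly the same kind of $L^\infty_t\cap L^1_t$-type Besov spaces as in Theorems \ref{th:diff-lim1}--\ref{th:diff-lim2}; in particular $\d_t b^\eps = -\vu^\eps\!\cdot\!\nabla b^\eps-k_1(b^\eps)\div\vu^\eps$ is bounded in $L^2_t(\dot B^{\frac n2-1}_{2,1}),$ so Ascoli gives, up to extraction, $\phi b^{\eps_k}\to \phi b$ strongly in $L^\infty_{\rm loc}(\R_+;\dot B^{\frac n2-\alpha}_{2,1})$ for every $\phi\in\cS$ and $\alpha\in(0,1].$ The radiative source in the velocity equation is only $L^1_t$-controlled, so as in the previous limits I would pass to the corrected quantity $\vu^\eps+\frac{\eps}n(1+k_4(b^\eps))\vc j_1^\eps$ and use the $\vc j_1^\eps$-equation to write
\[
\d_t\!\Bigl(\vu^\eps\!+\!\tfrac\eps n(1\!+\!k_4(b^\eps))\vc j_1^\eps\Bigr)=-\vu^\eps\!\cdot\!\nabla\vu^\eps+(1\!+\!k_2(b^\eps))\cA\vu^\eps-(1\!+\!k_3(b^\eps))\nabla b^\eps-\tfrac1n(1\!+\!k_4(b^\eps))\nabla j_0^\eps+\tfrac\eps n k'_4(b^\eps)\d_tb^\eps\,\vc j_1^\eps,
\]
whose right-hand side is bounded in $L^2_t(\dot B^{\frac n2-2}_{2,1})$ (or $L^2_t(\dot B^{\frac n2-2}_{2,\infty})$ if $n=2$); combined with \eqref{eq:poisson4}, this yields locally strong convergence of $\vu^\eps.$ The weak-$\ast$ limit $j_0^\eps\rightharpoonup j_0$ follows directly from \eqref{eq:unif-diff3}, and passage to the limit in the mass and momentum equations is then routine.

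For (iii), the elliptic equation for $j_0$ is obtained by combining the third and fourth lines of \eqref{eq:NSrad} through the relation $\cL(1+\cL_s)\vc j_1^\eps=-\nabla j_0^\eps-\eps\d_t\vc j_1^\eps,$ to get
\[
\frac{n\eps}{\cL}\d_t j_0^\eps-\frac{1}{\cL^2(1+\cL_s)}\Delta j_0^\eps-\frac{\eps}{\cL^2(1+\cL_s)}\d_t\div\vc j_1^\eps = n(b^\eps-j_0^\eps).
\]
By \eqref{eq:poissonsas1}, $1/(\cL^2(1+\cL_s))\to\nu^2/m;$ since $\eps\ll\cL,$ the first term on the left vanishes in $\cS';$ and the third is $\cO(\eps/(\cL\cL_s))=\cO(\eps\cL)$ in $\cS'$ thanks to $\vc j_1^\eps=\cO(\cL)$ and $\cL^2\cL_s\approx 1,$ hence it also vanishes. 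We therefore obtain $-\nu^2\Delta j_0+nm(j_0-b)=0$ in the limit, which is the third line of \eqref{eq:eq1}, so $(b,\vu,j_0)\in\check\cY^\nu$ solves \eqref{eq:eq1}. Under \eqref{eq:poisson7}, the initial data of the limit system are fixed (the compatibility condition ensures consistency with the elliptic constraint on $j_0$), and Theorem \ref{th:poisson} provides uniqueness, so every subsequential limit must coincide and the whole family converges.

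The main obstacle is step (iii): unlike in the non-equilibrium and equilibrium regimes, where the limiting radiation dynamics is either still evolutionary or trivially absent, here it degenerates into an \emph{elliptic} constraint linking $j_0$ and $b.$ Making the remainder $\eps\d_t\div\vc j_1^\eps/(\cL^2(1+\cL_s))$ vanish in $\cS'$ requires the refined bound $\vc j_1^\eps=\cO(\cL)$ from step (i), which is strictly stronger than the decay available in the equilibrium regime, and hinges crucially on the scaling $\cL^2\cL_s\approx 1$ built into \eqref{eq:poissonsas}--\eqref{eq:poissonsas1}. The elliptic nature of the limit equation is also what forces the initial-data compatibility condition in \eqref{eq:poisson7}.
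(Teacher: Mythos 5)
Your proposal is correct and follows essentially the same route as the paper's proof: establish $\vc j_1^\eps=\cO(\cL)$ by splitting $\vc j_1^\eps$ via $\vc\fj_1^\eps$ and the gradients of $j_0^\eps,b^\eps$ using \eqref{eq:unif-diff3}, extract limits of $(b^\eps,\vu^\eps,j_0^\eps)$ via compactness (for $b^\eps$) and the corrected unknown $\vu^\eps+\frac\eps n(1+k_4(b^\eps))\vc j_1^\eps$ (for the momentum equation), derive the elliptic constraint by combining the $j_0^\eps$ and $\vc j_1^\eps$ equations and using \eqref{eq:poisson4} together with $\ep\ll\cL$ to kill the remainder, and invoke the uniqueness of Theorem \ref{th:poisson} to upgrade subsequential convergence to convergence of the full family. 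Your bookkeeping is fine — the only slight imprecision is the phrase that ``each'' of the coefficients $\eps/(\cL\cM),\ 1/(\cL\cM),\ 1/(\cL\cL_s\cM)$ is of order $\cL$; in fact they are of orders $\eps\cL$, $\cL$ and $\cL^3$ respectively, but as the dominant contribution is $\cO(\cL)$ the conclusion is unaffected.
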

\bProof 
Let us first prove \eqref{eq:poisson4}.  
From \eqref{eq:unif-diff3}, we already know that 
$(\vc\fj_1^\ep)^{\ell,\cL\cM}$ and $(\vc j_1^\ep)^{h,\cL\cM}$ are $\cO(\ep\cL)$ in $L^1(\R_+;\dot B^{\frac n2-1}_{2,1}).$
Now, we have
$$
\vc j_1^\ep=\vc\fj_1^\ep-\frac1{\cL\cL_s}\nabla j_0^\ep+\frac1{\cL\cL_s\cM}\nabla b^\ep.
$$
It is easy to see that the last term is $\cO(\cL^3)$ in $L^1(\R_+;\dot B^{\frac n2}_{2,1}
+\dot B^{\frac n2-1}_{2,1})$, and that, according to \eqref{eq:poisson3} and $\cL^2\cL_s\approx1,$
 the last but one term is $\cO(\cL)$ in $L^1(\R_+;\dot B^{\frac n2}_{2,1}),$
 which completes the proof of \eqref{eq:poisson4}.
 \smallbreak
 Next, let us turn our attention to the convergence of $j_0^\ep.$ First, \eqref{eq:unif-diff3}
 and the definition of $\|\cdot\|_{\check Y^\nu_\ep}$ ensure that 
 $(j_0^\ep)^{h,\cL\cM}$ is bounded in, say,  $L^2(\R_+;\dot B^{\frac n2-1}_{2,1}).$
 Next, using the bound for the middle frequencies of $j_0$ and for the low frequencies of $\zeta_0,$
 we discover that  $(j_0^\ep)^{\ell,\cL\cM}$ is bounded in $L^2(\R_+;\dot B^{\frac n2}_{2,1}).$
  Hence, up to an omitted extraction 
  \begin{equation}\label{eq:poisson4a}
  j_0^\ep\rightharpoonup j_0 \quad\hbox{weak } *\quad\hbox{in }\ 
  L^2(\R_+;\dot B^{\frac n2-1}_{2,1}+\dot B^{\frac n2}_{2,1}).
  \end{equation}
    Now,  taking the divergence of  the equation of $\vc j_1^\ep,$ 
 then using the equation of $j_0^\ep$ gives
 \begin{equation}\label{eq:poisson5}
 \Delta j_0^\ep=-\cL(1+\cL_s)\bigl(n\cL(b^\ep-j_0^\ep)-\ep n\d_t j_0^\ep\bigr) -\ep\d_t \div j_1^\ep.
 \end{equation}
 Given \eqref{eq:poisson4}, one can assert that the last term tends to $0$ in the sense of distributions. 
 We also know that, up to an omitted extraction,  $j_0^\ep\to j_0$ in the sense of distributions, 
 hence given that $\cL(1+\cL_s)\ep\to0,$ the term with $\d_t j_0^\ep$ also 
 tends to $0.$ 
 Finally, exactly as in the cases treated before, 
  $(b^\eps)$ is  bounded in 
 $L^\infty(\R_+;\dot B^{\frac n2-1}_{2,1}\cap\dot B^{\frac n2}_{2,1})$ hence weakly converges 
 to some $b\in L^\infty(\R_+;\dot B^{\frac n2-1}_{2,1}\cap\dot B^{\frac n2}_{2,1}).$
As \eqref{eq:poissonsas1} has been assumed, passing to the limit in \eqref{eq:poisson5} gives
 $$ \nu^2\Delta j_0=-nm\bigl(b-j_0).$$

 Passing to the limit in the equation of $b$ goes along the lines of the  non-equilibrium case
 we notice that  $(\d_tb^\eps)$ is bounded in $L^2(\R_+;\dot B^{\frac n2-1}_{2,1})$ 
 and we thus have, up to an omitted extraction
  \begin{equation}\label{eq:poisson6}
 \phi\, b^{\eps}\longrightarrow \phi\, b\quad\hbox{in}\quad L^\infty(\R_+;\dot B^{\frac n2-\alpha}_{2,1})
 \quad\hbox{for all }\ \alpha\in(0,1).
 \end{equation}
As    \eqref{eq:unif-diff3} also implies that  $(\vu^\eps)$ is  bounded in 
$L^\infty(\R_+;\dot B^{\frac n2-1}_{2,1})\cap L^1(\R_+;\dot B^{\frac n2+1}_{2,1}),$ 
we have $\vu^\eps\rightharpoonup \vu$ weakly * in that space, 
which is enough to justify the first equation of \eqref{eq:eq1}. 
 \medbreak
  In order to pass to the limit in the velocity equation, we use again the fact that
 $$
\d_t\biggl(\vu^\eps+\frac\eps n\, k_4(b^\eps)\,\vc j_1^\eps\biggr)
=-\vu^\eps\cdot\nabla\vu^\eps+k_2(b^\eps)\cA\vu^\eps-k_3(b^\eps)\nabla b^\eps
+\frac\eps nk'_4(b^\eps)\d_tb^\eps\,\vc j_1^\eps-\frac1n\,k_4(b^\eps)\,\nabla j_0^\eps.
$$
As in the other asymptotic regimes, the  first four terms of the r.h.s. are 
 bounded in $L^2(\R_+;\dot B^{\frac n2-2}_{2,1})$ (or in  $L^2(\R_+;\dot B^{\frac n2-2}_{2,\infty})$ if $n=2$).
 To handle the last term, we observe that according to \eqref{eq:unif-diff3} and \eqref{eq:poisson4a}, 
 $(\nabla j_0^\eps)$ is bounded in $L^2(\R_+;\dot B^{\frac n2-2}_{2,1}+\dot B^{\frac n2-1}_{2,1}).$
 Because $(b^\eps)$ is bounded in $L^\infty(\R_+;\dot B^{\frac n2-1}_{2,1}\cap\dot B^{\frac n2}_{2,1}),$
 this implies  that  $k_4(b^\eps)\nabla j_0^\eps$ is bounded 
in $L^2(\R_+;\dot B^{\frac n2-2}_{2,1})$ (or $L^2(\R_+;\dot B^{\frac n2-2}_{2,\infty})$ if $n=2$), 
and thus $\d_t\bigl(\vu^\eps+\frac\eps n\, k_4(b^\eps)\,\vc j_1^\eps\bigr)$ is bounded in the same space.
\medbreak
As in the already studied cases, we conclude that  
there exists some $\vu$ in $L^\infty(\R_+;\dot B^{\frac n2-1}_{2,1})$ so that 
for all $\phi$ in $\cS$ and $\alpha\in(0,1),$ we have
$$
\phi\Bigl(\vu^{\eps}+\frac{\eps}n\vc j_1^{\eps}\Bigr)\longrightarrow\phi\vu\quad\hbox{in}\quad L^\infty_{loc}(\R_+;\dot B^{\frac n2-1-\alpha}_{2,1}).
$$
Finally, in the case where \eqref{eq:poisson7} is fulfilled, the limit system \eqref{eq:eq1} 
supplemented with  initial data  $(b_0,\vu_0,j_{0,0})$  possesses a unique solution
 $(b,\vu,j_0)$ given by Theorem \ref{th:poisson},
 and the whole family $(b^\ep,\vu^\ep,j_0^\eps)$ thus converges to $(b,\vu,j_0).$
 \qed
 
 
 \appendix\section{Estimates for a toy linear differential equation}\label{s:A}
 
 The appendix is devoted to the proof of decay estimates for the solutions to systems of ODEs of the form
$$
\partial_tU+A_0U+\rho\left(A_1+B_1\right)U+\rho^2A_2U=0,\leqno(E)
$$
where $\rho$ is a nonnegative parameter, 
 and $A_0,$ $A_1,$ $B_1$ and $A_2$ are given $N\times N$ matrices.
We have in mind System \eqref{eq:diff0} 
  in which case, after suitable change of unknowns  (see \eqref{eq:U}), 
$A_0$ is a degenerate nonnegative diagonal matrix, $A_2$ has nonnegative eigenvalues
and $A_1$ is skewsymmetric up to some positive diagonal symmetrizer.
 
\subsection{A general approach}
 
The basic idea is to set $V:=(I+\rho P)U$ where $P$ is a suitable matrix, so as 
 to  eliminate the bad first order term $\rho B_1U.$
Now, whenever $(I+\rho P)$ is invertible,  the equation for $V$ reads
$$\displaylines{
\partial_tV+A_0V+\rho\bigl(A_1+B_1+[P,A_0]\bigr)V+\rho^2\bigl([A_0,P]P
+[P,A_1]+[P,B_1]+ A_2\bigr)V\hfill\cr\hfill
+\rho^3(I+\rho P)\bigl((A_1+B_1)P^2-A_0P^3- A_2P\bigr)(I+\rho P)^{-1} V=0.}
$$
Therefore, if one can find some matrix $P$ so that
\begin{equation}\label{eq:Pcom}
[A_0,P]=B_1,
\end{equation}
then we have 
\begin{equation}\label{eq:V}
\partial_tV+A_0V+\rho A_1 V+\rho^2\left( A_2+PB_1+[P,A_1]\right)V\\
=\rho^3(I+\rho P)\,A_3\, (I+\rho P)^{-1} V,
\end{equation}
where $A_3:=(PA_0-A_1)P^2+ A_2P.$
\medbreak
The gain is clear as the matrix $B_1$ now appears at order $2$ instead of order $1$. 
Hence the system for $V$ is more likely to be tractable  for small enough $\rho$ as we shall see below. 
 

\subsection{Application to the linearized system for barotropic radiative flows}

The system we are interested in reads 
\begin{equation}\label{eq:class}
\frac d{dt}\left(
\begin{array}{c}\wh a\\ \wh d\\\wh j_0\\\wh j_1\end{array}\right)
+\left(\begin{array}{cccc}0&\rho&0&0\\-\rho&\rho^2&0&-\varsigma\\
-\eta&0&\beta&\alpha\rho\\0&0&-\alpha\rho&\gamma\end{array}\right)
\left(\begin{array}{c}\wh a\\\wh d\\\wh j_0\\\wh j_1\end{array}\right)=
\left(\begin{array}{c}0\\0\\0\\0\end{array}\right),
\end{equation}
where all the coefficients of the matrix are positive.
\smallbreak
To bound the solutions of \eqref{eq:class} for small enough $\rho$ (under some stability condition that we will discover below), 
we propose two  different approaches, the first 
one being appropriate to handle the case  where $\beta$ and $\gamma$ are of the same order of magnitude, 
and the second one, to the case where $\beta/\gamma\ll1$ or $\gamma/\beta\ll1$
(of course only $\gamma\geq\beta$ is relevant as far as \eqref{eq:diff0} is concerned).

\subsubsection{First approach}

 Making the change of unknown 
\begin{equation}\label{eq:U}
U:=\left(\begin{array}{cccc}1&0&0&0\\0&1&0&\frac\varsigma\gamma\\
-\frac\eta\beta&0&1&0\\0&0&0&1\end{array}\right)
\left(\begin{array}{c}\wh a\\\wh d\\\wh j_0\\\wh j_1\end{array}\right),
\end{equation}
and setting $\tilde\alpha:=\alpha+\frac{\varsigma\eta}{\beta\gamma},$ 
we see that $U$ satisfies a  system  of type $(E)$ with 
$$
A_0:=\left(\begin{array}{cccc}0&0&0&0\\0&0&0&0\\
0&0&\beta&0\\0&0&0&\gamma\end{array}\right),
\qquad
A_1:=\left(\begin{array}{cccc}0&1&0&0\\-1-\frac{\alpha\varsigma\eta}{\beta\gamma}&0&0&0\\
0&0&0&\tilde\alpha\\0&0&-\alpha&0\end{array}\right),
$$
$$
B_1:=-
\left(\begin{array}{cccc}0&0&0&\frac\varsigma\gamma\\0&0&\frac{\alpha\varsigma}\gamma&0\\
0&\frac\eta\beta&0&0\\\frac{\alpha\eta}\beta&0&0&0\end{array}\right)
\quad\hbox{and}\quad
A_2:=\left(\begin{array}{cccc}0&0&0&0\\0&1&0&-\frac\varsigma\gamma\\
0&0&0&0\\0&0&0&0\end{array}\right)\cdotp
$$
Note that the above  matrices may be written in block form  as follows
\[
B_1=
\left(
\begin{array}{cc}
0&B_1^1\\
B_1^2&0
\end{array}
\right),\ \ \
A_0=
\left(
\begin{array}{cc}
0&0\\
0&\Delta
\end{array}
\right),\ \ \
A_1=
\left(
\begin{array}{cc}
A^{1}_1&0\\
0&A^{2}_1
\end{array}
\right),\ \ \
P=
\left(
\begin{array}{cc}
P^{11}&P^{12}\\
P^{21}&P^{22}
\end{array}
\right)\cdotp
\]
Computing the commutator
\begin{equation}\label{eq:A0P}
[A_0,P]=
\left(
\begin{array}{cc}
0&-P^{12}\Delta\\
\Delta P^{21}&[\Delta,P^{22}]
\end{array}
\right),
\end{equation}
we see \eqref{eq:Pcom} is satisfied if 
 $$P^{11}:=0,\quad P^{22}:=0,\quad P^{12}:=-B_1^1\Delta^{-1}, \quad P^{21}:=\Delta^{-1}B_1^2.$$
 In other words 
 \begin{equation}\label{eq:P}
 P=\left(\begin{array}{cccc}0&0&0&\frac\varsigma{\gamma^2}\\0&0&\frac{\alpha\varsigma}{\beta\gamma}&0\\
0&-\frac\eta{\beta^2}&0&0\\-\frac{\alpha\eta}{\beta\gamma}&0&0&0\end{array}\right),
\end{equation}
which, remembering \eqref{eq:U}, corresponds to  the following  change of unknowns
\begin{equation}\label{eq:Vc}
V=\left(\begin{array}{c}\wh\fb\\ \wh\fd\\\wh\fj_0\\\wh\fj_1\end{array}\right)
:=\left(\begin{array}{cccc}1&0&0&\frac\varsigma{\gamma^2}\rho\\
-\frac{\alpha\varsigma\eta}{\beta^2\gamma}\rho&1&\frac{\alpha\varsigma}{\beta\gamma}\rho&\frac\varsigma\gamma\\
-\frac\eta\beta&-\frac\eta{\beta^2}\rho&1&-\frac{\varsigma\eta}{\beta^2\gamma}\rho\\
-\frac{\alpha\eta}{\beta\gamma}\rho&0&0&1\end{array}\right)
\left(\begin{array}{c}\wh a\\ \wh d\\\wh j_0\\\wh j_1\end{array}\right)\cdotp
\end{equation}
Note that the determinant of the matrix $(I+\rho P)$  is
$$
\biggl(1+\frac{\alpha\varsigma\eta}{\beta^3\gamma}\rho^2\biggr)
\biggl(1+\frac{\alpha\varsigma\eta}{\beta\gamma^3}\rho^2\biggr),
$$
and is thus of order $1$ whenever $\rho$ satisfies the smallness condition
\begin{equation}\label{eq:smallrho1}
\rho^2\lesssim \frac{\beta\gamma}{\alpha\varsigma\eta}\,\min(\beta^2,\gamma^2).
\end{equation}

In order to go further in the estimates of $V,$  we  compute
 $$PB_1=\left(
\begin{array}{cc}
-B_1^1\Delta^{-1}B_1^2&0\\
0&\Delta^{-1}B_1^2B_1^1
\end{array}
\right)=\left(\begin{array}{cccc}-\frac{\alpha\varsigma\eta}{\beta\gamma^2}&0&0&0\\0&-\frac{\alpha\varsigma\eta}{\beta^2\gamma}&0&0\\
0&0&\frac{\alpha\varsigma\eta}{\beta^2\gamma}&0\\0&0&0&\frac{\alpha\varsigma\eta}{\beta\gamma^2}\end{array}\right)\quad\hbox{and}\quad$$
$$\begin{array}{lll} [P,A_1]&\!\!\!\!=\!\!\!\!&
\left(
\begin{array}{cc}
0&-B_1^1\Delta^{-1}A_1^2+A_1^1B_1^1\Delta^{-1}\\
\Delta^{-1}B_1^2A_1^1-A_1^2\Delta^{-1}B_1^2&0
\end{array}
\right)\\[3ex]&\!\!\!\!=\!\!\!\!&\left(\begin{array}{cccc}0&0&-\frac{\alpha\varsigma}\gamma(\frac1\beta\!+\!\frac1\gamma)&0\\0&0&0&\frac{\alpha\tilde\alpha\varsigma}{\beta\gamma}+\frac\varsigma{\gamma^2}
(1\!+\!\frac{\alpha\varsigma\eta}{\beta\gamma})\\
\frac{\alpha\tilde\alpha\eta}{\beta\gamma}+\frac\eta{\beta^2}
(1\!+\!\frac{\alpha\varsigma\eta}{\beta\gamma})&0&0&0\\0&
-\frac{\alpha\eta}{\beta}(\frac1\beta\!+\!\frac1\gamma)
&0&0\end{array}\right)\cdotp\end{array}
$$
Finally, $A_3:=(PA_0-A_1)P^2+ A_2P$ reads
\begin{equation}\label{eq:A_3}
A_3=\frac{\alpha\varsigma\eta}{\beta\gamma}\left(\begin{array}{cccc}
0&\frac1{\beta^2}&0&-\frac\varsigma{\gamma^3}\\
\frac1\gamma-\frac1{\gamma^2}\bigl(1\!+\!\frac{\alpha\varsigma\eta}{\beta\gamma}\bigr)&0&\frac1\eta-\frac{\alpha\varsigma}{\beta^2\gamma}&0\\
0&0&0&\frac{\tilde\alpha}{\gamma^2}\\
0&0&-\frac\alpha{\beta^2}&0\end{array}\right)\cdotp
\end{equation}
Therefore, resuming to \eqref{eq:V}, we conclude that
$$
\frac d{dt} V+A_0V+\rho A_1V+\rho^2\bigl(PB_1+ A_2\bigr)V=\rho^2[A_1,P]V+\cO(\rho^3).
$$
Of course, the remainder term $\cO(\rho^3)$  strongly depends on the coefficients of the system.
We shall see below that the structure of $[A_1,P]$ will enable us to treat 
$\rho^2[A_1,P]$ and the nondiagonal term of $A_2$ as  small error terms as well.
\medbreak
Let us  focus on the system satisfied by $(\wh\fb,\wh\fd)$ for a while. 
We have
\begin{multline}\label{eq:hydrolf}
\frac d{dt}\left(\begin{array}{c}\wh\fb\\\wh\fd\end{array}\right)
+\rho\left(\begin{array}{cc}0&1\\-1-\frac{\alpha\varsigma\eta}{\beta\gamma}&0\end{array}\right)\left(\begin{array}{c}\wh\fb\\\wh\fd\end{array}\right)
+\rho^2\left(\begin{array}{cc}-\frac{\alpha\varsigma\eta}{\beta\gamma^2}&0\\0&1-\frac{\alpha\varsigma\eta}{\beta^2\gamma}\end{array}\right)\left(\begin{array}{c}\wh\fb\\\wh\fd\end{array}\right)\\
=\rho^2\left(\begin{array}{cc}\frac{\alpha\varsigma}\gamma(\frac1\beta+\frac1\gamma)&0\\
0&\frac{\varsigma\nu}\gamma-\frac{\alpha\tilde\alpha\varsigma}{\beta\gamma}-\frac\varsigma{\gamma^2}
(1+\frac{\alpha\varsigma\eta}{\beta\gamma})\end{array}\right)\left(\begin{array}{c}\wh\fj_0\\\wh\fj_1\end{array}\right)+\cO(\rho^3).
\end{multline}
For small enough $\rho,$ optimal estimates may be proved by taking advantage 
of the results of Appendix \ref{s:B}. 
Indeed, denoting by $\wh F_\rho$ the r.h.s. of \eqref{eq:hydrolf}, we see from  \eqref{eq:ODE5} that  if we set 
$$
\cU_\rho^2:=\biggl(1+\frac{\alpha\varsigma\eta}{\beta\gamma}\biggr)|\wh\fb|^2+|\wh\fd|^2
-\rho\biggl(1+\frac{\alpha\varsigma\eta}{\beta\gamma}\Bigl(\frac1\gamma-\frac1\beta\Bigr)\biggr)\Re(\wh\fb\,\overline{\wh\fd}),
$$
then, under the following necessary and sufficient stability condition 
\begin{equation}\label{eq:stabcond1}
\tilde\nu:=1-\frac{\alpha\varsigma\eta}{\beta\gamma}\biggl(\frac1\beta+\frac1\gamma\biggr)>0,
\end{equation}
we have (see \eqref{eq:ODE3} and \eqref{eq:ODE4})
\begin{equation}
 \cU_\rho\approx |(\wh\fb,\wh\fd)|\quad\hbox{and}\quad\frac d{dt}\cU_\rho^2+\frac{\tilde\nu}3\rho^2\cU_\rho^2\lesssim \cU_\rho |\wh F_\rho|,
\end{equation}
whenever
\begin{equation}\label{eq:hydrolf1a}
\rho\leq\frac{\sqrt{1+\frac{\alpha\varsigma\eta}{\beta\gamma}}}{1+\frac{\alpha\varsigma\eta}{\beta\gamma}(\frac1\gamma-\frac1\beta)}\cdotp
\end{equation}
So finally,  we get 
for some appropriate constant $C=C(\alpha,\beta,\gamma,\varsigma,\eta)$ 
$$
|(\wh\fb,\wh\fd)(t)|+\tilde\nu\rho^2\int_0^t|(\wh\fb,\wh\fd)|\,d\tau \leq C\biggl(
|(\wh\fb,\wh\fd)(0)|+\rho^2\!\int_0^t|(\wh\fj_0,\wh\fj_1)|\,d\tau
+\rho^3\!\int_0^t|(\wh\fb,\wh\fd,\wh\fj_0,\wh\fj_1)|\,d\tau\biggr),
$$
which, if $\rho\ll\tilde\nu,$   may be simplified into
\begin{equation}\label{eq:hydrolf2}
|(\wh\fb,\wh\fd)(t)|+\tilde\nu\rho^2\int_0^t|(\wh\fb,\wh\fd)|\,d\tau \leq C\biggl(
|(\wh\fb,\wh\fd)(0)|+\rho^2\int_0^t|(\wh\fj_0,\wh\fj_1)|\,d\tau\biggr)\cdotp
\end{equation}
The modified radiative modes $\fj_0$ and $\fj_1$ fulfill
\begin{multline}\label{eq:radialf}
\frac d{dt}\left(\begin{array}{c}\wh\fj_0\\\wh\fj_1\end{array}\right)
+\rho\left(\begin{array}{cc}0&\tilde\alpha\\-\alpha&0\end{array}\right)\left(\begin{array}{c}\wh\fj_0\\\wh\fj_1\end{array}\right)
+\left(\begin{array}{cc}\beta+\frac{\alpha\varsigma\eta}{\beta^2\gamma}\rho^2&0\\0&\gamma+\frac{\alpha\varsigma\eta}{\beta\gamma^2}\rho^2\end{array}\right)\left(\begin{array}{c}\wh\fj_0\\\wh\fj_1\end{array}\right)\\
=\rho^2\left(\begin{array}{cc}-\frac{\alpha\tilde\alpha\eta}{\beta\gamma}-\frac\eta{\beta^2}
(1+\frac{\alpha\varsigma\eta}{\beta\gamma})&0\\
0&\frac{\alpha\eta}\beta(\frac1\beta+\frac1\gamma)\end{array}\right)\left(\begin{array}{c}\wh\fb\\\wh\fd\end{array}\right)+\cO(\rho^3).
\end{multline}
Therefore we easily get
$$\displaylines{
\frac12\frac d{dt}\biggl(|\wh\fj_0|^2+\frac{\tilde\alpha}\alpha|\wh\fj_1|^2\biggr)
+\biggl(\beta+\frac{\alpha\varsigma\eta}{\beta^2\gamma}\rho^2\biggr)|\wh\fj_0|^2
+\frac{\tilde\alpha}{\alpha}\biggl(\gamma+\frac{\alpha\varsigma\eta}{\beta\gamma^2}\rho^2\biggr)|\wh\fj_1|^2
\hfill\cr\hfill\leq C\bigl(\rho^2|(\wh\fb,\wh\fd)|+\rho^3|(\wh\fb,\wh\fd,\wh\fj_0,\wh\fj_1)|\bigr).}
$$
Then, integrating and assuming that $\rho\ll1$  yields
\begin{equation}\label{eq:J}
(|(\wh\fj_0,\wh\fj_1)(t)|+\min(\beta,\gamma)\int_0^t|(\wh\fj_0,\wh\fj_1)|\,d\tau\leq
C\biggl(|(\wh\fj_0,\wh\fj_1)(0)|+\rho^2\int_0^t|(\wh\fb,\wh\fd)|\,d\tau\biggr)\cdotp\end{equation}
Combining with \eqref{eq:hydrolf2}, we can conclude that 
there exists some positive constants $\rho_0$ and $C$ depending only 
on $(\alpha,\beta,\gamma,\varsigma,\eta)$ so that for all 
\begin{equation}\label{eq:condlf}
0\leq\rho\leq \min(1,\tilde\nu)\,\rho_0,\end{equation} we have
\begin{multline}\label{eq:lf}
|(\wh\fb,\wh\fd)(t)|+\tilde\nu|(\wh\fj_0,\wh\fj_1)(t)|+\tilde\nu\rho^2\int_0^t|(\wh\fb,\wh\fd)|\,d\tau
+\tilde\nu\min(\beta,\gamma)\int_0^t|(\wh\fj_0,\wh\fj_1)|\,d\tau
\\\leq C\bigl(|(\wh\fb,\wh\fd)(0)|+\tilde\nu|(\wh\fj_0,\wh\fj_1)(0)|\bigr).\end{multline}


\subsubsection{Second approach}

In the case where $\beta$ and $\gamma$ are not of the same order of magnitude, 
Inequality \eqref{eq:lf} is not fully satisfactory,
first because we would like  to have a control on $\beta\int_0^t|\wh\fj_0|\,d\tau$ and
 $\gamma\int_0^t|\wh\fj_1|\,d\tau$ rather than just on $\min(\beta,\gamma)\int_0^t|(\wh\fj_0,\wh\fj_1)|\,d\tau$
and, second, because the range for which \eqref{eq:lf} holds true tends to shrink to $0$ if 
$\beta\ll\gamma$ or $\gamma\ll\beta.$

In this paragraph, we propose another approach to handle \eqref{eq:class} in the case $\beta\not=\gamma,$ still based on rewriting the system 
in the form \eqref{eq:V}, but with a different definition of $A_1$ and $B_1$ ($A_0$ and $A_2$ being unchanged).
More precisely, we now set 
$$
A_1:=\left(\begin{array}{cccc}0&1&0&0\\-1-\frac{\alpha\varsigma\eta}{\beta\gamma}&0&0&0\\0&0&0&0\\0&0&0&0
\end{array}\right)\quad\hbox{and}\quad
B_1:=\left(\begin{array}{cccc}0&0&0&-\frac\varsigma\gamma\\0&0&-\frac{\alpha\varsigma}\gamma&0\\
0&-\frac\eta\beta&0&\tilde\alpha\\
-\frac{\alpha\eta}\beta&0&-\alpha&0\end{array}\right)\cdotp
$$
Then writing the matrices coming into play   in block form, we see according to \eqref{eq:A0P}, that 
a possible choice for $P$ is 
$$
P^{11}:=0,\quad\! P^{12}:=-B^1_1\Delta^{-1},\quad\! P^{21}:=\Delta^{-1}B_1^2,\quad\!
P^{22}:=\frac1{\beta\!-\!\gamma}\left(\begin{array}{cc}0&\tilde\alpha\\\alpha&0\end{array}\right)
\ \hbox{ with }\  \tilde\alpha:=\alpha+\frac{\ep\eta}{\beta\gamma}\cdotp
$$
With this new definition of $P,$ we have
$$\displaylines{
\quad PB_1=\left(\begin{array}{cccc}-\frac{\alpha\varsigma\eta}{\beta\gamma^2}&0&-\frac{\alpha\varsigma}{\gamma^2}&0\\
0&-\frac{\alpha\varsigma\eta}{\beta^2\gamma}&0&\frac{\alpha\tilde\alpha\varsigma}{\beta\gamma}\\
\frac{\alpha\tilde\alpha\eta}{\beta(\gamma-\beta)}0&0&\frac{\alpha\varsigma\eta}{\beta^2\gamma}+\frac{\alpha\tilde\alpha}{\gamma-\beta}&0\\
0&\frac{\alpha\eta}{\beta(\gamma-\beta)}&0&\frac{\alpha\varsigma\eta}{\beta\gamma^2}+\frac{\alpha\tilde\alpha}{\beta-\gamma}\end{array}\right)\hfill\cr\hfill
\hbox{and}\quad
[P,A_1]=\left(\begin{array}{cccc}0&0&-\frac{\alpha\varsigma}{\beta\gamma}&0\\0&0&0&\frac{\varsigma}{\gamma^2}\bigl(1\!+\!\frac{\alpha\varsigma\eta}{\beta\gamma}\bigr)\\
\frac\eta{\beta^2}
\bigl(1\!+\!\frac{\alpha\varsigma\eta}{\beta\gamma}\bigr)&0&0&0\\0&
-\frac{\alpha\eta}{\beta\gamma}
&0&0\end{array}\right)\cdotp\quad}
$$
Therefore setting 
\begin{equation}\label{eq:Vc2}
V=\left(\begin{array}{c}\wh\fb\\ \wh\fd\\\wh\fj_0\\\wh\fj_1\end{array}\right)
:=\left(\begin{array}{cccc}1&0&0&\frac\varsigma{\gamma^2}\rho\\
-\frac{\alpha\varsigma\eta}{\beta^2\gamma}\rho&1&\frac{\alpha\varsigma}{\beta\gamma}\rho&\frac\varsigma\gamma\\
-\frac\eta\beta&-\frac\eta{\beta^2}\rho&1&\bigl(\frac{\tilde\alpha}{\beta-\gamma}-\frac{\varsigma\eta}{\beta^2\gamma}\bigr)\rho\\
\frac{\alpha\eta}{\gamma(\gamma-\beta)}\rho&0&\frac\alpha{\beta-\gamma}\rho&1\end{array}\right)
\left(\begin{array}{c}\wh a\\ \wh d\\\wh j_0\\\wh j_1\end{array}\right),
\end{equation}
it is clear that working with $(\wh a,\wh d, \wh j_0, \wh j_1)$ or $(\wh\fb,\wh\fd,\wh\fj_0,\wh\fj_1)$ is equivalent whenever
$$
\rho\leq C|\gamma-\beta|,
$$
for some  positive constant $C$ depending continuously on the coefficients of the system.
\medbreak
Putting together  the previous computations, we see that $V$ fulfills
$$\displaylines{
\frac d{dt}V+\left(\begin{array}{cccc}-\frac{\alpha\varsigma\eta}{\beta\gamma^2}\rho^2&0&0&0\\
0&\bigl(1\!-\!\frac{\alpha\varsigma\eta}{\beta^2\gamma}\bigr)\rho^2&0&0\\
0&0&\beta\!+\!(\frac{\alpha\varsigma\eta}{\beta^2\gamma}+\frac{\alpha\tilde\alpha}{\gamma-\beta})\rho^2&0\\
0&0&0&\gamma\!+\!\bigl(\frac{\alpha\varsigma\eta}{\beta\gamma^2}\!+\!\frac{\alpha\tilde\alpha}{\beta-\gamma}\bigr)\rho^2\end{array}\right)\!V
\hfill\cr\hfill+\rho\left(\begin{array}{cccc}0&1&0&0\\-1-\frac{\alpha\varsigma\eta}{\beta\gamma}&0&0&0\\0&0&0&0\\0&0&0&0\end{array}\right)\!V
\hfill\cr\hfill=\rho^2\left(\begin{array}{cccc}0&0&\frac{\alpha\varsigma}\gamma\bigl(\frac1\beta\!+\!\frac1\gamma\bigr)&0\\
0&0&0&\frac{\varsigma}\gamma-\frac{\alpha\tilde\alpha\varsigma}{\beta\gamma}-\frac\varsigma{\gamma^2}\bigl(1\!+\!\frac{\alpha\varsigma\eta}{\beta\gamma}\bigr)\\
\frac{\alpha\tilde\alpha\eta}{\beta(\beta-\gamma)}-\frac\eta{\beta^2}\bigl(1\!+\!\frac{\alpha\varsigma\eta}{\beta\gamma}\bigr)&0&0&0\\
0&\frac{\alpha\eta}{\gamma(\beta-\gamma)}&0&0
\end{array}\right)\hfill\cr\hfill
+\rho^3(I+\rho P)A_3(I+\rho P)^{-1}V,}
$$
with $A_3:=(PA_0-A_1)P^2+ A_2 P$ satisfying
$|A_3|\leq C\bigl(1+|\gamma-\beta|^{-3}\bigr).$
\medbreak
Next, arguing exactly as to handle \eqref{eq:hydrolf}, 
we discover that under the stability condition \eqref{eq:stabcond1} and for $\rho$ satisfying \eqref{eq:hydrolf1a} (and of course
also $\rho\leq c|\beta-\gamma|^3$), we have 
\begin{multline}\label{eq:radialf0}
|(\wh\fb,\wh\fd)(t)|+\tilde\nu\rho^2\int_0^t|(\wh\fb,\wh\fd)|\,d\tau \leq C\biggl(
|(\wh\fb,\wh\fd)(0)|\\+\rho^2\!\int_0^t|(\wh\fj_0,\wh\fj_1)|\,d\tau
+{\rho^3}\biggl(1+\frac1{|\gamma-\beta|^3}\biggr)\int_0^t|(\wh\fb,\wh\fd,\wh\fj_0,\wh\fj_1)|\,d\tau\biggr)\cdotp
\end{multline}
Now, in contrast with the first method, we can bound $\wh\fj_0$ and $\wh\fj_1$ independently from one another
from the equation satisfied by $\wh\fj_0,$ we readily get
\begin{multline}\label{eq:radialf1}
|\wh\fj_0(t)|+\biggl(\beta+\frac{\alpha\varsigma\eta}{\beta^2\gamma}\rho^2\biggr)\int_0^t|\wh\fj_0|\,d\tau\leq|\wh\fj_0(0)|+\frac{C\rho^2}{|\beta-\gamma|}\int_0^t|\wh\fb|\,d\tau
\\+{C\rho^3}\biggl(1+\frac1{|\gamma-\beta|^3}\biggr)\int_0^t|(\wh\fb,\wh\fd,\wh\fj_0,\wh\fj_1)|\,d\tau,
\end{multline}
while $\wh\fj_1$ satisfies
\begin{multline}\label{eq:radialf2}
|\wh\fj_1(t)|+\biggl(\gamma+\Bigl(\frac{\alpha\varsigma\eta}{\beta\gamma^2}\!+\!\frac{\alpha\tilde\alpha}{\beta-\gamma}\Bigr)\rho^2\biggr)
\int_0^t|\wh\fj_1|\,d\tau\leq|\wh\fj_1(0)|+\frac{C\rho^2}{|\beta-\gamma|}\int_0^t|\wh\fd|\,d\tau\\+{C\rho^3}\biggl(1+\frac1{|\gamma-\beta|^3}\biggr)\int_0^t|(\wh\fb,\wh\fd,\wh\fj_0,\wh\fj_1)|\,d\tau.
\end{multline}
Putting inequalities  \eqref{eq:radialf0}, \eqref{eq:radialf1} and \eqref{eq:radialf2} together, it is now easy to conclude that
\begin{multline}\label{eq:lf2}
|(\wh\fb,\wh\fd)(t)|
+\tilde\nu|\gamma-\beta||(\wh\fj_0,\wh\fj_1)(t)|+\tilde\nu\rho^2\int_0^t|(\wh\fb,\wh\fd)|\,d\tau\\
+\tilde\nu|\gamma-\beta|\biggl(\beta\int_0^t|\wh\fj_0|\,d\tau +\gamma\int_0^t|\wh\fj_1|\,d\tau\biggr)
\leq C\bigl(|(\wh\fb,\wh\fd)(0)|+\tilde\nu|\gamma-\beta||(\wh\fj_0,\wh\fj_1)(0)|\bigr),
\end{multline}
if, for some small enough constant $c$ depending continuously on $\alpha,$ $\beta,$ $\gamma$ and~$\varsigma,$ we have
\begin{equation}
\rho\leq c\min\biggl(1,\,\tilde\nu,\,|\gamma-\beta|^2,\,\tilde\nu|\gamma-\beta|^3\biggr)\cdotp
\end{equation}


\section{Optimal decay estimates for a toy system}\label{s:B}

For the reader convenience, we here recall some results that have
been obtained in our recent work \cite{DD3} for the following linear system of ordinary differential equations
\begin{equation}\label{eq:ODE}
\left\{\begin{array}{l}
\d_tX+a\rho Y-b\rho^2 X=A,\\
\d_tY-c\rho X+d\rho^2 Y=B.
\end{array}
\right.
\end{equation}
Above,  $\rho$ stands for a given nonnegative small parameter and $a,$ $b,$ $c$ and $d$ are four real numbers satisfying the stability condition
\begin{equation}\label{eq:stab-ODE}
a>0,\quad c>0\quad \hbox{ and }\ d-b>0.
\end{equation}
Routine computations show that the following  \emph{Lyapunov functional} 
$\cL_\rho^2:=c|X|^2+a|Y|^2-\rho(d+b)\Re(X\bar Y)$ satisfies the relation
\begin{multline}\label{eq:ODE2}
\frac12\frac d{dt}\cL_\rho^2+\biggl(\frac{d-b}2\biggr)\rho^2\bigl(c|X|^2+a|Y|^2\bigr)
+\biggl(\frac{b^2-d^2}2\biggr)\rho^3\Re(X\bar Y)\\=\Re\bigl(cA\bar X+aB\bar Y-\rho(b\!+\!d)(B\bar X\!+\!A\bar Y)\bigr).
\end{multline}
Now, observe 
that  whenever $\rho\leq\frac{\sqrt{ac}}{|b+d|},$ we have
$$\biggl|\biggl(\frac{b^2-d^2}2\biggr)\rho^3\Re(X\bar Y)\biggr|\leq \biggl(\frac{d-b}4\biggr)\rho^2\bigl(c|X|^2+a|Y|^2\bigr),
$$
and
\begin{equation}\label{eq:ODE3}
\frac12\bigl(c|X|^2+a|Y|^2\bigr)\leq\cL_\rho^2\leq\frac32\bigl(c|X|^2+a|Y|^2\bigr),
\end{equation}
which leads if $A\equiv B\equiv 0$ to
\begin{equation}\label{eq:ODE4}\frac d{dt}\cL_\rho^2+\biggl(\frac{d-b}3\biggr)\cL_\rho^2\leq0,\end{equation}
 and thus
\begin{equation}\label{eq:ODE5}
\cL_\rho(t)\leq e^{-(\frac{d-b}6)\rho^2t} \cL_\rho(0).
\end{equation}
Combining with  \eqref{eq:ODE3} and Duhamel's formula, we deduce that 
for general source terms $A$ and $B$ we have 
\begin{multline}\label{eq:ODE7}
\sqrt{c|X(t)|^2+a|Y(t)|^2}\leq \sqrt3\,e^{-(\frac{d-b}6)\rho^2 t}\biggl(\sqrt{c|X(0)|^2+a|Y(0)|^2}
\\+\int_0^t e^{(\frac{d-b}6)\tau} \sqrt{c|A|^2+a|B|^2}\,d\tau\biggr)\cdotp
\end{multline}

\end{document}